\newcommand{\tpoint}[1]{\vspace{3mm}\par \noindent \refstepcounter{subsubsection}{\bf \thesubsubsection.}
  {\textbf{#1} }}
\newcommand{\spoint}{\vspace{3mm}\par \noindent \refstepcounter{subsection}{\bf \thesubsection.} }
\newcommand{\aspoint}{\vspace{3mm}\par \noindent \refstepcounter{subsubsection}{\bf \thesubsubsection.} }
\newtheorem*{nthm}{Theorem}
\newtheorem*{nlem}{Lemma}
\newtheorem*{nprop}{Proposition}
\newtheorem*{ncor}{Corollary}
\theoremstyle{remark}
\numberwithin{equation}{section}
\newcommand{\mf}[1]{\mathfrak{#1}}
\newcommand{\hf}[1]{\widehat{\mathfrak{{#1}}}}
\newcommand{\wh}[1]{\widehat{#1}}
\newcommand{\mc}[1]{\mathcal{{#1}}}
\newcommand{\rr}{\rightarrow}
\renewcommand{\t}[1]{\widetilde{#1}}
\newcommand{\la}{\langle}
\newcommand{\ra}{\rangle}
\newcommand{\be}[1]{\begin{eqnarray} \label{#1}}
\newcommand{\ee}{\end{eqnarray}}
\newcommand{\vphi}{\phi}
\newcommand{\Z}{{\mathbb{Z}}}
\newcommand{\R}{{\mathbb{R}}}
\newcommand{\C}{{\mathbb{C}}}
\newcommand{\A}{{\mathbb{A}}}
\newcommand{\Q}{{\mathbb{Q}}}
\newcommand{\I}{{\mathbb{I}}}
\newcommand{\ad}{\mathbb{A}}
\newcommand{\hG}{\wh{G}}
\newcommand{\zee}{\mathbb{Z}}
\newcommand{\hA}{\wh{A}}
\newcommand{\hU}{\wh{U}}
\newcommand{\hK}{\wh{K}}
\newcommand{\hgam}{\wh{\Gamma}}
\newcommand{\hp}{\widehat{P}}
\newcommand{\hg}{\widehat{G}}
\newcommand{\ha}{\widehat{A}}
\newcommand{\hh}{\widehat{H}}
\newcommand{\hdelta}{\widehat{\Delta}}
\newcommand{\hu}{\widehat{U}}
\newcommand{\hb}{\widehat{B}}
\newcommand{\hk}{\widehat{K}}
\newcommand{\hP}{\wh{P}}
\renewcommand{\th}{\theta}
\newcommand{\aw}{\wh{W}}
\newcommand{\hB}{\wh{B}}
\newcommand{\hs}{\wh{\mathfrak{S}}}
\newcommand{\hed}{(\hf{h}^e)^*}
\newcommand{\he}{\hf{h}^e}
\newcommand{\iwa}{\iw_{\ha}}
\newcommand{\iwea}{\iw_{\eta(s) \ha}\,}
\newcommand{\iwam}{\iw_{M_\th}}
\newcommand{\lal}{\Lambda_{\ell+1}}
\newcommand{\car}{\wh{\bf{C}}}
\newcommand{\hPi}{\wh{\Pi}}
\newcommand{\hQ}{\wh{Q}}
\newcommand{\hLambda}{\wh{\Lambda}}
\newcommand{\dd}{\mathbf{D}}
\DeclareMathOperator{\Lie}{Lie}
\DeclareMathOperator{\Ad}{Ad}
\newcommand{\gad}{\hg_{\A}}
\newcommand{\kad}{\hk_{\A}}
\newcommand{\had}{\hh_{\A}}
\newcommand{\uad}{\hu_{\A}}
\newcommand{\bad}{\hb_{\A}}
\newcommand{\aiw}{\iw^{\A}}
\newcommand{\aiwa}{\iw^{\A}_{\ha}}
\newcommand{\aiwea}{\iw^{\A}_{\eta(s) \ha}\,}
\DeclareMathOperator{\iw}{Iw}
\renewcommand\a{\alpha}
\renewcommand\b{\beta}
\newcommand\g{\gamma}
\renewcommand\d{\delta}
\renewcommand\l{\lambda}
\renewcommand\L{\Lambda}
\newcommand\D{\Delta}
\renewcommand\D{\Delta}
\newcommand\G{\Gamma}
\newcommand\f{\frac}
\newcommand\smallf[2]{{\textstyle{\frac{#1}{#2}}}}
\newcommand\re{\text{Re~}}
\renewcommand\Re{\text{Re~}}
\newcommand{\Aut}{\operatorname{Aut}}
\renewcommand\i{^{-1}}
\renewcommand\({\left(}
\renewcommand\){\right)}
\newcommand{\ttwo}[4]{
\(\begin{smallmatrix}{#1} & {#2}
\\ {#3} & {#4} \end{smallmatrix}\)}
\newcommand{\sgn}{\operatorname{sgn}}
\newcommand\srel[2]{\begin{smallmatrix} {#1} \\ {#2} \end{smallmatrix}}
\newcommand{\mcen}{\iw_{\ha_{cen}}}
\def\imod#1{\allowbreak\mkern5mu({\operator@font mod}\,#1)}
\begin{document}

\title{Entirety of cuspidal Eisenstein series on loop groups}

\author{Howard Garland}
\address{Department of Mathematics, Yale University, PO Box 208283,
New Haven, CT 06520-8283}\email{garland-howard@yale.edu}

\author{Stephen D. Miller}
\address{Department of Mathematics, Rutgers University, 110 Frelinghuysen Road, Piscataway, NJ 08544}\email{miller@math.rutgers.edu}
\thanks{Supported by NSF grant DMS-1201362.}

\author{Manish M. Patnaik}
\address{Department of Mathematical and Statistical Sciences, University of Alberta,
Edmonton, Alberta  T6G 2G1
CANADA}\email{patnaik@ualberta.ca}\thanks{Supported in part by an NSF Postdoctoral Fellowship DMS-0802940 and an University of Alberta Startup grant}

\date{January 5, 2016}

\maketitle

\begin{abstract} In this paper, we prove the entirety of  loop group Eisenstein series   induced from cusp forms on the underlying finite dimensional group, by demonstrating their absolute convergence on the full complex plane.
This is quite in contrast to the finite-dimensional setting, where such series only converge absolutely in a right half plane (and  have poles elsewhere coming from $L$-functions in their constant terms).
Our result is the $\Q$-analog of a theorem of A.~Braverman and D.~Kazhdan from the function field setting  \cite[Theorem 5.2]{bk:ad}, who previously showed the analogous Eisenstein series there are finite sums.
\end{abstract}


\section{Introduction}

\spoint
The theory of Eisenstein series on finite dimensional Lie groups has had profound applications in number theory, geometry, and mathematical physics.  Though Eisenstein series are initially defined as absolutely convergent sums in a right half plane or Weyl chamber, some of the most interesting applications occur at points of meromorphic continuation outside the range of convergence.
 The broad spectrum of their impact  includes:~the Langlands-Shahidi method of analytically continuing $L$-functions that occur in their Fourier expansions;
   calculations of volumes of fundamental domains through residues;
   constructing interesting representations (e.g., unitary) of noncompact Lie groups; estimates for moments of $L$-functions through multiple Dirichlet series; and applications to string theory.

Each of these areas has proposed applications from Eisenstein series on infinite-dimensional Kac-Moody groups.  The potential applications to $L$-functions are particularly striking, since the use of finite-dimensional groups has hit natural barriers both in the Langlands-Shahidi  and multiple Dirichlet series methods -- barriers which are tempting to circumvent through Kac-Moody groups.  More  detailed comments about these programs, along with a discussion of how our results pertain to them, are given in section~\ref{sec:introEminus} below.

This paper concerns the analytic continuation of cuspidal Eisenstein series on loop groups, for which absolute convergence in a natural right half plane has been established in papers of the first-named author.    A fundamental obstacle in this subject is that we do not yet know how to adapt the spectral theory methods  that obtain the crucial meromorphic continuation in the finite-dimensional case.
Our main theorem below gives an {\em entire} continuation of these series to the full complex plane by demonstrating their defining sums are in fact absolutely convergent everywhere.

\spoint \label{sec:intro-statement}
This paper builds on a  series of papers \cite{ga:abs,ga:ragh,ga:ms1, ga:ms2, ga:ms3, ga:ms4,ga:zuck} by the first-named author, which studied minimal parabolic Eisenstein series on loop groups. These series were shown to converge in a region
 analogous to the shifted Weyl chamber established by Godement \cite[\S3]{go:lang} in the finite-dimensional setting.  Moreover, by developing an analog of the Maass-Selberg relations, they were subsequently meromorphically continued to a larger region in   \cite{ga:ms1, ga:ms2, ga:ms3, ga:ms4}.  In contrast to the finite dimensional case, however,   there is a natural boundary or ``wall" of singularities at the boundary of this region, and thus these minimal parabolic loop Eisenstein series cannot be meromorphically continued any further.

On the other hand there  are also \emph{cuspidal loop Eisenstein series}, which are induced from cusp forms on the  Levi components of  parabolic subgroups of loop groups, e.g., maximal parabolics.  Their convergence in the Godement-type region was proved in \cite{ga:zuck} for  maximal parabolics, and  it was furthermore shown they can be analytically continued   up to the   aforementioned ``wall''   using an analog of the  Maass-Selberg relations.  Of note here is that   these relations for maximal parabolics consist of just a single   term, and consequently   cuspidal loop  Eisenstein series for maximal parabolic subgroups actually admit a  holomorphic  continuation up to the wall \cite{ga:zuck}.

In this paper we show that a much stronger result holds for cuspidal loop Eisenstein series  induced from the maximal parabolic subgroup whose Levi component  comes from the underlying finite-dimensional root system. The following result analytically continues these series  beyond the ``wall'':

\begin{nthm} \label{maininintro} Let $\phi$ be a spherical cusp form for a Chevalley group $G$ over $\Q$.  Then the cuspidally-induced Eisenstein series  $ E_{\vphi, \nu}$   (\ref{cusp:es}) on the loop group of $G$ converges absolutely for any $\nu \in \C$ to an entire function of $\nu$.  \end{nthm}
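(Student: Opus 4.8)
The plan is to prove that (\ref{cusp:es}) converges absolutely for every $\nu\in\C$, uniformly on compact subsets of the $\nu$-plane, and then to conclude entirety from the Weierstrass theorem on locally uniform limits of holomorphic functions. Write $\hgam$ for the relevant arithmetic group, $\hP$ for the maximal parabolic of the loop group whose Levi factor $M$ is (an extension by a torus of) the finite-dimensional Chevalley group $G$, and $\hgam_{\hP}=\hgam\cap\hP$. Relative to the Iwasawa decomposition along $\hP$, every point $x$ of the loop group has a well-defined finite Levi component $m_{\hP}(x)\in M(\R)$ and a single remaining ``$\alpha_0$-torus'' coordinate; the inducing datum of $E_{\vphi,\nu}$ is $\vphi$ on $M$ together with a character $\Phi_\nu$ of the latter coordinate (schematically $\Phi_\nu=|\,\cdot\,|^{\nu\Lambda_0+\rho_{\hP}}$), so that $E_{\vphi,\nu}(g)=\sum_{\gamma\in\hgam_{\hP}\backslash\hgam}\vphi\big(m_{\hP}(\gamma g)\big)\,\Phi_\nu(\gamma g)$. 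It therefore suffices to bound the nonnegative sum $\sum_{\gamma}\big|\vphi\big(m_{\hP}(\gamma g)\big)\big|\;\big|\Phi_{\Re\nu}(\gamma g)\big|$.

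First I would stratify $\hgam_{\hP}\backslash\hgam$ by a ``loop-depth'' $\ell(\gamma)\in\Z_{\ge0}$ read off from the Birkhoff--Bruhat cell $\hP w_\ell\hP$ of the affine Weyl group $\hw=W\ltimes Q^\vee$ containing $\gamma$, with $\ell=0$ the identity coset. The crux is a geometric lemma whose ultimate source is the relation $\alpha_0=\delta-\theta$ binding the affine simple root to the highest root $\theta$ of the finite root system: for each fixed $g$ there are constants $c_1>0$ and $c_2$ such that $\ell(\gamma)=\ell$ forces $m_{\hP}(\gamma g)$, once reduced into a Siegel set of $M(\Z)\backslash M(\R)$ with torus part $a$, to satisfy $\la\log a,\alpha_j\ra\ge c_1\ell-c_2$ for some simple root $\alpha_j$ of $G$. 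Heuristically, increasing $\ell$ is motion far in the $\Lambda_0$-direction, and $\alpha_0=\delta-\theta$ couples this to motion far along $\theta^\vee$ inside $M$, i.e.\ deep into a cusp of $M$. This coupling is exactly what is absent for a maximal parabolic of the finite group $G$ --- there the inducing direction is disjoint from the Levi's cusp, which is why the analogous cuspidal Eisenstein series converges only in a half-plane --- and over a function field it is the mechanism by which $\vphi$ vanishes identically past a bounded level, so that the corresponding series of \cite[Theorem 5.2]{bk:ad} is a finite sum.

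Granting the lemma, the rapid decrease of the spherical cusp form $\vphi$ on $M(\Z)\backslash M(\R)$ --- the classical fact that $\vphi$ is bounded and, in a Siegel set, $|\vphi|\ll_N\prod_i(a^{\alpha_i})^{-N}$ for every $N$ --- yields for each $N$ a constant $C_N(g)$ with $\big|\vphi\big(m_{\hP}(\gamma g)\big)\big|\le C_N(g)\,e^{-N\ell(\gamma)}$ uniformly in $\gamma$. The complementary quantity is controlled at a rate independent of $N$: a levelwise version of the domination argument of \cite{ga:zuck} --- comparing, one stratum at a time, with the minimal-parabolic loop Eisenstein series and using the standard estimate for the Iwasawa cocycle, together with finite-dimensional cuspidality of $\vphi$ inside a fixed level --- gives $\sum_{\gamma:\,\ell(\gamma)=\ell}\big|\Phi_{\Re\nu}(\gamma g)\big|\le C(g,\nu)\,e^{\kappa(\Re\nu)\,\ell}$ for a constant $\kappa(\Re\nu)$ growing at most linearly in $|\Re\nu|$. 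Summing over the strata, $\big|E_{\vphi,\nu}(g)\big|\le\sum_{\ell\ge0}C_N(g)\,C(g,\nu)\,e^{(\kappa(\Re\nu)-N)\,\ell}$, which is finite as soon as $N>\kappa(\Re\nu)$ --- a choice available for every fixed $\nu$ --- and is dominated locally uniformly in $\nu$. Hence the series converges absolutely on all of $\C$ and represents an entire function of $\nu$; the holomorphy up to the ``wall'' recorded in \cite{ga:zuck}, where the Maass--Selberg expression degenerates to a single term, is the special case of this already visible in the Godement-type region.

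The principal obstacle is the geometric lemma --- that loop-depth forces cuspidal depth at a strictly positive linear rate, uniformly in the coset and locally uniformly in $g$. I would prove it by reducing to a rank-one affine computation along the $\alpha_0$-root string, essentially the $\widehat{SL}_2$-calculation governing the affine reflection $w_{\alpha_0}$, and then propagating the estimate through the $\hP$-Iwasawa decomposition, in the manner in which the minimal-parabolic convergence bounds of \cite{ga:ms1,ga:ms2,ga:ms3,ga:ms4} are assembled from rank-one input. The remaining and more routine point is the uniform levelwise domination of the $\Phi_{\Re\nu}$-sums, which is precisely where the domination argument of \cite{ga:zuck} re-enters, now needed only one stratum at a time rather than globally.
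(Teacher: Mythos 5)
There is a genuine gap, and it lies in the stratum-by-stratum bound $\sum_{\gamma:\,\ell(w)=\ell}|\Phi_{\Re\nu}(\gamma g)|\le C(g,\nu)\,e^{\kappa(\Re\nu)\ell}$, which does not hold in the range $\Re\nu\le 2h^\vee$ where you actually need it. The left side is a sum of $y^{\Re\nu}$ over a single Bruhat cell, and within any nontrivial cell the central coordinate $y=\mcen(\g\eta(s)g)$ ranges over values tending to $0$; already the degenerate minimal-parabolic series restricted to one cell diverges once $\Re\nu$ drops below the abscissa $2h^\vee$, and no per-stratum bound that is merely exponential in $\ell$ can render it finite. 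This is exactly why the paper never sums the character $y^{\Re\nu}$ by itself over a cell: instead it dominates each individual term $y^{\Re\nu}\,\Phi(\iw_{M_\th}(\g\eta(s)g))$ by a $w$-bounded constant times $y^{\nu_0}$ for one fixed $\nu_0>2h^\vee$, and then appeals a single time to the convergence of the full degenerate series at $\nu_0$ (Theorem~\ref{gar:main}).

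Making that termwise comparison work forces the cusp-form decay to be applied with an exponent $N$ that grows linearly in $\ell(w)$. Writing $|\Phi|\le(C_1N)^{C_1N}\,x^{-N}$ and using $x\ge D\,y^{-(C_2(\ell(w)+1))^{-1}}$ from Corollary~\ref{cor:iwineq:2}, the choice $N=dC_2(\ell(w)+1)$ converts $x^{-N}$ into $D^{-N}y^{d}$, so after multiplying by $y^{\Re\nu-\nu_0}$ one is left with $y^{a}$, $a=d+\Re\nu-\nu_0>0$; the quadratic-with-negative-leading-term upper bound on $\log y$ then annihilates the cell-dependent constant $(C_1N)^{C_1N}D^{-N}$. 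Your appeal to the classical rapid-decay estimate $|\vphi|\ll_N\prod_i(a^{\alpha_i})^{-N}$ with an untracked implied constant is precisely the trap that Remark~\ref{remark731} flags: since $N$ varies with the cell, one genuinely needs the explicit $(CN)^{CN}$ uniformity of Theorem~\ref{decay:body} (proved in the appendix), not the classical statement. Finally, the geometric lemma as you state it is not what the paper proves or uses: Corollary~\ref{cor:iwineq:2} relates $x=a^{\rho}$ to $y$ (not $\ell(w)$ directly), and its proof via Theorem~\ref{iwineq} tracks three qualitatively distinct contributions $H_1,H_2,H_3$ --- from the $U_{w,\A}$-factor, from $w$-conjugation of $\eta(s)$, and from the fixed base point $g$ --- which do not collapse to a rank-one affine $SL_2$ computation along the $\alpha_0$-string.
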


 \noindent   For example, this theorem applies to Eisenstein series on the loop group $E_9$ induced from spherical cusp forms $\phi$ on $E_8$.  Following the work \cite{ga:zuck} (which was written in the number field setting), a function-field analog of the above entirety result was announced by A.~Braverman and D.~Kazhdan  in \cite[Theorem 5.2(3)]{bk:ad}.
 As in many problems in automorphic forms, the underlying analytic mechanism responsible for the convergence differs greatly between the number field and function field setting, and indeed
  their proof relies on a geometric interpretation of the Eisenstein series unavailable over number fields.  In fact, at each fixed element in the appropriate symmetric space, their cuspidal Eisenstein series is  a finite sum (independent of the spectral parameter).  This finiteness crucially uses the observation of G.~Harder in the function field setting that cusp forms vanish outside of a compact subset of the fundamental domain.

  However, this vanishing outside of a compact set is far from true in the number field case, where cusp forms instead merely decay rapidly in the cusps -- a crucial aspect of our proof.   While rapid (e.g., polynomial) decay statements have well-known applications in the classical theory of automorphic forms and analytic number theory, such results turn out to be   insufficient for our purposes (see Remark~\ref{remark731}  as well as the end of this subsection for further details).  Instead we require an {\em exponential decay} result like Theorem~\ref{mainthm} (or its equivalent statement Theorem~\ref{decay:body})    to obtain   convergence in the number field setting.

Another key ingredient in our work is Theorem~\ref{iwineq} and its Corollary~\ref{cor:iwineq:2}, a growth estimate on the diagonal Iwasawa components in the classical and central directions as one varies over the loop analog of the discrete group.    In the function field setting, this estimate -- coupled with Harder's  compact  support theorem  -- is enough to deduce the Braverman-Kazhdan   result.  In the number field setting, the analysis is more involved and the entirety result we prove here is instead  shown by establishing a reduction to  the half-plane convergence result \cite{ga:zuck} recalled in Theorem~\ref{iw:iota}.

The statement of   Theorem~\ref{maininintro}   assumes that the cusp form $\phi$ is {\em spherical}, by which we mean that it is $K$-fixed at all places of $\Q$.  This restriction is undoubtedly not essential to our proof,
  but is made  because   of an obstacle coming from the local  theory of loop groups:~the usual definition of $K$-finite Eisenstein series (e.g., \cite[(6.3.1)]{shahidi}) relies on a matrix coefficient construction whose generalization to loop groups has not been developed.  Thus the  loop Eisenstein series induced from $K$-finite cusp forms have not even been defined, and so it is premature to study their convergence.
The $K$-finite condition at nonarchimedean places amounts to a congruence group condition, and hence a smaller group of summation in the Eisenstein series definition.  In the finite-dimensional setting, the absolute convergence of   congruence group Eisenstein series is an immediate consequence of the absolute convergence of  full-level Eisenstein series (simply by dropping terms).  Thus   relaxing the $K$-fixed condition at nonarchimedean places  may actually make absolute convergence  easier.

    The $K$-fixed condition at the
     archimedean place also arises in a paper by Kr\"otz and Opdam  \cite{Kr:opdam} on Bernstein's theorem that cusp forms decay exponentially.
     Though their methods of holomorphic  continuation of representations apply to arbitrary   archimedean $K$-finite cusp forms as well (e.g., in unpublished notes by those authors), there is currently no proof in the literature.   These decay results imply  our  important ingredient Theorem~\ref{decay:body}.    For completeness, and in order to allow our convergence argument to apply to $K$-finite Eisenstein series once their definition is eventually given, we have included an elementary proof of Theorem~\ref{decay:body}  for arbitrary   archimedean   $K$-finite cusp forms in the Appendix, which interjects only a small amount of hard analysis into the classical  methods for proving rapid decay.  Together, this provides  the full analytic machinery anticipated to be necessary to handle the convergence of the presently-undefined $K$-finite Eisenstein series.  The appendix also discusses some related questions about decay estimates posed by string theorists.

\spoint \label{sec:introEminus}  We conclude the introduction with some comments about possible extensions of the Langlands-Shahidi method using Theorem~\ref{sec:intro-statement}, in particular updating the status reports from \cite{ga:zuck} and \cite{bk:ad}.
 Recall that in his monograph {\em Euler Products} \cite{eulerproducts}, Langlands was led to introduce the notion of automorphic $L$-function in the course of studying the constant terms of  Eisenstein series.
   Namely, he found that the constant terms can be expressed in terms of ratios of a  special subfamily of automorphic $L$-functions $L(s,\phi,\rho)$:~those for which  $\phi$ is a cusp form on the Levi component $M$ of a maximal parabolic subgroup of $G$, and
 $\rho$ is a representation occurring in the decomposition of the  adjoint action of the Langlands dual group ${}^{L}M$ on the (dual of) the unipotent radical of the parabolic.  This generality is both a strength and a weakness, because it inherently limits which $L$-functions can arise (since all such $\rho$ are easily classified).  On the other hand, it is not difficult to show that {\em any} finite dimensional representation of a Chevalley group arises in the adjoint action of some ${}^{L}M$ in the Kac-Moody setting.  Already for loop groups several new and interesting $\rho$ arise (such as the 248-dimensional adjoint representation of $^LM=E_8$ inside $G=E_9$).

Exploiting the fact that the Eisenstein series on $G$ can be meromorphically continued  using spectral techniques (which do not apply directly to $L$-functions themselves), Langlands  deduced the meromorphy of the constant term and hence of its constituent $L$-functions.  This transference of meromorphy depends on the simple fact that the constant terms are obtained by integration over a compact space.  At the outset we should state that the biggest analytic issue to applying Theorem~\ref{sec:intro-statement} {\em ala} Langlands is the lack of a corresponding measure theory in the loop group setting,   due to noncompactness issues.

Affine Weyl groups are infinite and do not have longest elements.  This is reflected in the fact that loop groups have non-conjugate ``upper'' and ``lower'' triangular parabolics.  Thus each Levi subgroup $M$ arises two  very different ways:~either as the Levi of the ``upper triangular''  parabolic $P$ of Taylor loops with positive powers, or as the Levi of the opposite, ``lower triangular'' parabolic $P^-$ of Taylor loops with negative powers, where $P$ and $P^-$ are not conjugate.  Thus there are potentially two types of constant terms:~one involving integration over the unipotent radical of $P$, and one involving integration over the unipotent radical of $P^-$.    Both of these unipotent radicals are infinite-dimensional, and neither is locally compact. However, whereas the unipotent radical of $P$ can be equipped with a suitable projective limit structure  \cite[\S3]{ga:ragh} which makes its arithmetic quotient compact, the unipotent radical for $P^-$ is not known to have any such structure.

F.~Shahidi \cite{shahidi-infinite} showed that a naive extension of the Langlands-Shahidi method (involving the constant term for $P$) could not capture any nontrivial automorphic $L$-functions.
 A.~Braverman and D.~Kazhdan proposed that the constant term for $P^{-}$ contains the anticipated $L$-functions $L(s,\phi,\rho)$, where $\phi$ is a cusp form on $M$ and  $\rho$ occurs in the adjoint action of $^{L}M$.  The non-archimedean analogue of this computation (i.e., Gindikin-Karpelevich formula) has now been achieved in \cites{bfk, bgkp}.  However, globally there is a problem defining the measure on the unipotent radical of $P^-$  because of complications at the archimedean primes.

Braverman and  Kazhdan  further  proposed the following framework into which our entirety result naturally fits.    Corresponding to the two maximal  parabolics $P$ and $P^{-}$  one can consider two different Eisenstein series:~the (positive) cuspidal loop Eisenstein series  $E_{\phi, \nu}$ (\ref{cusp:es}) defined using $P,$ and the negative cuspidal loop Eisenstein series $E^-_{\phi, \nu}$ defined using  $P^-.$
At present $E^{-}_{\nu,\phi}$ is only defined in the function field case, and only for $\Re\!(\nu)$ large at that (see \cite{bk:ad} for the original geometric definition or \cite{gmp} for a purely group-theoretical definition).   Note that naively switching $P$ to $P^-$ in (\ref{cusp:es}) causes the series to diverge and so $E^{-}_{\nu,\phi}$ must be defined as a ``regularization'' of this divergent series.  The divergence is akin to the one which would result by choosing the degree parameter $r$ from (\ref{eta(s)}) to be \emph{negative} in the definition (\ref{cusp:es}) of $E_{\nu,\phi}$.
Theorem~\ref{maininintro} shows that  $E_{\phi, \nu}$ is   entire.  However, the series $E^-_{\phi, \nu}$
is predicted to    have an analytic continuation only up to a   ``wall" similar to the one discussed at the beginning of \S\ref{sec:intro-statement}.  Furthermore, there is an expected functional equation relating the positive and negative Eisenstein series  of the form
\be{eplus-emin} E_{\nu, \phi} \ \  = \ \  \Xi(\nu, \phi) E^-_{\nu, \phi}\,, \ee
 where $\Xi(\nu, \phi)$ can be written in terms of  automorphic $L$-functions.  The precise form of $\Xi(\nu, \phi)$ is suggested by the Gindikin-Karpelevich integral from \cite{bgkp, bfk}.

 One advantage of switching from $E_{\nu,\phi}$ to $E^-_{\nu, \phi}$ is that the constant term expected to give  new $L$-functions  switches  from $P^{-}$ to $P$.  As mentioned above, the latter now has
sufficiently adequate measure theory to transfer holomorphy.  Thus holomorphic continuation of $E^-_{\nu, \phi}$ would give new analytic continuations of $L$-functions in its constant terms.  Alternatively, if the functional equation (\ref{eplus-emin}) can be proven in some domain, then the automorphic $L$-functions in $\Xi(\nu, \phi)$ would be continuable to $E^-_{\nu, \phi}$'s domain of holomorphy (because of the holomorphy of $E_{\nu,\phi}$ proved in Theorem~\ref{maininintro}).
In the function field setting, currently one can define $E^-_{\nu, \phi}$ \cite{bk:ad}, take its constant terms over $P$ \cite{bgkp,bfk}, and  prove the functional equation (\ref{eplus-emin}).  However, the analytic continuation of $E^-_{\nu, \phi}$ is presently unknown, and    {\em none} of these ingredients is known   in the number field setting.

In summary, there are two approaches to obtaining new meromorphic continuations of $L$-functions using loop Eisenstein series:~either developing an integration theory over the unipotent radical of $P^{-}$ to transfer the holomorphy of $E_{\nu,\phi}$, or instead defining and developing the analytic properties of $E^{-}_{\nu,\phi}$.  We note that the (presently missing) integration theory could also yield meromorphic continuations of non-constant   Fourier-Whittaker   coefficients.  If applied to Eisenstein series on covers of loop groups, this could potentially be a source of new meromorphic continuations of multiple Dirichlet series,    such as ones used for studying moments of Dirichlet $L$-functions (see \cite{Bucur-Diaconu}, which has a detailed discussion of the fourth moment problem for quadratic characters.).

\vspace{0.15in}

\emph{Acknowledgements:}  We would like to thank Alexander Braverman for informing us of his results with David Kazhdan and of Bernstein's exponential decay estimates. We would also like to thank William Casselman, Solomon Friedburg, Michael B.~Green, Jeffrey Hoffstein, Bernhard Kr\"otz,  Peter Sarnak, Wilfried Schmid, and Freydoon Shahidi for their helpful comments.

\section{Notation} \label{section:notation}

In this section we shall recall some background material which is used later in the paper.
Part 2A concerns the finite-dimensional situation, part 2B discusses affine loop groups, part 2C focuses on the affine Weyl group, and part 2D summarizes some useful decompositions.  Finally, part 2E redescribes some of this material from the adelic point of view.  A general reference for most of this material  is Kac's book~\cite{kac}.

\vspace{.3cm}
\begin{center}
{\bf A. Finite Dimensional Lie Algebras and Groups}
\end{center}

\spoint \label{finrtsystems} Let $\bf{C}$ be an irreducible $\ell\times \ell$ classical Cartan matrix. Let $\mf{g}$ be the corresponding real, split, simple, finite-dimensional  Lie algebra of rank $\ell.$ Choose a Cartan subalgebra $\mf{h} \subset \mf{g}$ and denote the set of roots with respect to $\mf{h}$ by $\Delta.$  Let $\Pi= \{ \alpha_1, \ldots, \alpha_{\ell} \}\subset {\frak h}^*$ denote a fixed choice of   positive simple roots, and $\Pi^{\vee} = \{ h_1, \ldots, h_{\ell} \}\subset {\frak h}$ the corresponding set of simple coroots. We let $\Delta_{+}$ and $\Delta_{-}$ be the sets of positive and negative roots, respectively. For a root $\alpha \in \Delta$ we denote by $h_{\alpha}$ the corresponding coroot. Let $(\cdot, \cdot)$ denote the Killing form on $\mf{h}$ normalized so that $(\alpha_0, \alpha_0)=2$, where $\alpha_0$ is the highest root of $\mf{g}$, and write $\langle \cdot,\cdot \rangle$ for the natural pairing between ${\frak h}^*$ and $\frak h$.

Let $W$ be the corresponding Weyl group of the root system defined above  and $Q$ and $Q^{\vee}$ the root and coroot lattices, which are spanned over $\zee$ by $\Pi$ and $\Pi^{\vee}$, respectively.  We define  $\Lambda \supset Q$ and $\Lambda^{\vee}\supset Q^\vee$ to be the weight and coweight lattices, respectively. The fundamental weights will be denoted by $\omega_i$ for $i=1, \ldots, \ell$;  recall that these  are defined by the conditions that
\begin{equation}\label{new2.1}
 \langle \omega_i, h_j \rangle \ \  = \ \ \left\{
                                                          \begin{array}{ll}
                                                            1\,, & i\,=\,j\, \\
                                                            0\,, & i\,\neq\,j\,
                                                          \end{array}
                                                        \right. \ \ \
 \text{ for ~} 1 \,\le \, i,j\, \le \, \ell \,.
\end{equation}
We also define the fundamental coweights $\omega_j^{\vee} \in \mf{h}$ for $j=1, \ldots, \ell$ by the conditions  that
\begin{equation}\label{new2.2}
 \langle \alpha_i, \omega^{\vee}_j \rangle \ \  = \ \ \left\{
                                                          \begin{array}{ll}
                                                            1\,, & i\,=\,j\, \\
                                                            0\,, & i\,\neq\,j\,
                                                          \end{array}
                                                        \right. \ \ \  \text{ for ~} 1 \,\le \, i,j\, \le \, \ell \,.
\end{equation}
Note that we have \be{cowt:corts} ( \omega_i^{\vee}, h_j ) =\ \ \left\{
                                                          \begin{array}{ll}
                                                            \f{2}{(\alpha_i,\alpha_i)}\,, & i\,=\,j\, \\
                                                            \ \ \ \ 0\,, & i\,\neq\,j\,
                                                          \end{array}
                                                        \right. \ \ \  \text{ for ~} 1 \,\le \, i,j\, \le \, \ell \,. \ee
As usual we set  \be{rho} \rho \ \ = \  \ \smallf{1}{2} \sum_{\alpha  \, \in \,  \Delta_+} \alpha
 \ \ = \ \ \sum_{j\,=\,1}^\ell\omega_j
\,, \ee
 which satisfies the condition \be{rho:h_i} \la \rho, h_i \ra  \ \ = \ \  1\; \text{ for ~} 1\,\le \, i \, \le \,   \ell \,.\ee  We record here the following elementary statement which will be useful later. It is equivalent to the positivity of the inverse of the Cartan matrix.

\begin{nlem}[\cite{ragh}*{p.111, Lemma 6}] \label{ragh} Each fundamental weight $\omega\in\{\omega_1,\ldots,\omega_\ell\}$ can be written as a strictly positive linear combination of positive simple roots $\alpha_1,\ldots,\alpha_\ell$.
\end{nlem}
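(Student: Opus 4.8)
The plan is to work with the unique expansion $\omega_i = \sum_{j=1}^\ell m_{ij}\,\alpha_j$, which is available since $\Pi$ is a basis of $\mf{h}^*$, and to show that every coefficient $m_{ij}$ is strictly positive. I would do this in two stages: first prove $m_{ij}\ge 0$, and then upgrade non-negativity to positivity using the irreducibility of $\bf{C}$.

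For the first stage I would fix $i$, let $S=\{\,j : m_{ij}<0\,\}$, and split $\omega_i=\mu+\nu$ with $\mu=\sum_{j\in S}m_{ij}\alpha_j$ and $\nu=\sum_{j\notin S}m_{ij}\alpha_j$. Since $(\cdot,\cdot)$ is positive definite on the real span of $\Delta$, one has $0\le(\mu,\mu)=(\mu,\omega_i)-(\mu,\nu)$. Because $\omega_i$ is dominant, $(\omega_i,\alpha_j)$ is a non-negative multiple of $\la\omega_i,h_j\ra=\delta_{ij}$, so $(\mu,\omega_i)=\sum_{j\in S}m_{ij}(\alpha_j,\omega_i)\le 0$; and since $(\alpha_j,\alpha_k)\le 0$ for $j\ne k$ while $m_{ij}<0$ and $m_{ik}\ge 0$ whenever $j\in S$ and $k\notin S$, one gets $(\mu,\nu)\ge 0$. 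Hence $(\mu,\mu)\le 0$, which forces $\mu=0$, i.e. $S=\emptyset$ and $m_{ij}\ge 0$ for all $j$. Pairing the expansion with $h_i$ then gives $1=2m_{ii}+\sum_{j\ne i}m_{ij}\la\alpha_j,h_i\ra\le 2m_{ii}$, so in particular $m_{ii}>0$.

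For the second stage I would set $T=\{\,j : m_{ij}>0\,\}$, which is nonempty since it contains $i$, and suppose for contradiction that $T\ne\{1,\dots,\ell\}$. Irreducibility of $\bf{C}$ means the Dynkin diagram is connected, so there exist $j\in T$ and $k\notin T$ with $\la\alpha_j,h_k\ra<0$. As $i\in T$ and $k\notin T$ we have $k\ne i$, hence $0=\la\omega_i,h_k\ra=\sum_{j'\in T}m_{ij'}\la\alpha_{j'},h_k\ra$; every term of this sum has $m_{ij'}>0$ and, since $j'\ne k$, $\la\alpha_{j'},h_k\ra\le 0$, so the vanishing of the sum forces every term to vanish, contradicting $\la\alpha_j,h_k\ra<0$. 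Therefore $T=\{1,\dots,\ell\}$, and since $i$ was arbitrary the lemma follows. The only substantive step is the first stage — it is the classical fact that a dominant weight is a non-negative combination of simple roots — while the second stage is a short connectedness argument in which the irreducibility hypothesis on $\bf{C}$ is precisely what is used.
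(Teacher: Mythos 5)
Your proof is correct. Note that the paper does not actually supply a proof of this lemma: it cites Raghunathan (\emph{op.\ cit.}) and remarks only that the statement ``is equivalent to the positivity of the inverse of the Cartan matrix.'' Your argument is precisely a self-contained proof of that positivity. The first stage (that a dominant integral weight, and hence $\omega_i$, is a non-negative combination of simple roots) via the decomposition $\omega_i=\mu+\nu$ and the inequality $(\mu,\mu)=(\mu,\omega_i)-(\mu,\nu)\le 0$ is the standard argument; the evaluation $\langle\omega_i,h_i\rangle=1$ correctly yields $m_{ii}\ge\smallf12>0$; and the second stage is a clean connectedness argument using irreducibility of $\mathbf{C}$, with the key observation that for $k\notin T$ the pairing $\langle\omega_i,h_k\rangle=0$ forces every summand $m_{ij'}\langle\alpha_{j'},h_k\rangle$ (each $\le 0$) to vanish, which is incompatible with $j\in T$ being adjacent to $k$. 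Since the paper gives no proof to compare against, the only thing worth flagging is that the phrase ``Because $\omega_i$ is dominant'' is slightly indirect — you in fact know the exact values $\langle\omega_i,h_j\rangle=\delta_{ij}$, so $(\omega_i,\alpha_j)=\smallf{(\alpha_j,\alpha_j)}{2}\delta_{ij}$ — but this does not affect correctness.
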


\spoint \label{sd1} Let $G$ be a finite dimensional, simple, real Chevalley group with Lie algebra $\frak g$ constructed as in \cite{stein:yale}. Let $B \subset G$ be a Borel subgroup with unipotent radical $U$ and split torus $T.$   Let $A \subset T$ be the connected component of the identity of $T$, and assume (as we may) that $B$ is arranged so that $\Lie(A)=\frak h$ and that $\Lie(U)$ is spanned by the root vectors for $\D_+$.  Then $G$ has a maximal compact subgroup $K$ with Lie algebra orthogonal to $\frak h$ such that the   Iwasawa decomposition
\be{iw:fd} G \ \ = \ \  U \,  A\,  K \ee
holds with uniqueness of expression.
 We denote the natural projection onto the $A$-factor by the map $g\mapsto \iw_A(g)$.
Any linear functional $\lambda: \mf{h} \rr \C$ gives rise to a
quasi-character   $a \mapsto a^{\lambda}$ of $A$ via
 \be{atolambdef} a^{\lambda} \ \ := \ \ \exp( \langle  \lambda , \ln a \rangle ). \ee

\spoint \label{sd2} By construction, $G \subset \Aut(V)$ for some highest weight module $V.$
Moreover, for the Chevalley lattice $V_{\zee} \subset V$ constructed as in \cite{stein:yale}, let $\Gamma \subset G$ denote  the stabilizer of $V_{\zee}.$ We then use the following notion of Siegel set as defined in \cite{bor:groupes}. Let $U_{\mc{D}} \subset U$ denote  a fundamental domain for the action of $\Gamma \cap U$ on $U.$ For any $t > 0$ we set \be{Atdef} A_t \ \  :=  \ \ \{ a \in A \, |\, a^{\alpha_i} > t \,  \ \text{for each}\  1\,\le\,i \,\le\,\ell \}\,. \ee  By definition, a Siegel set has the form $\mf{S}_t =  U_\mc{D}  A_t  K$.
 The following is shown in \cite{bor:groupes}.

\begin{nprop} Suppose that $t < \sqrt{3}/2.$ Then for every $g \in G$ there exists $\gamma \in \Gamma$ such that $\gamma g \in \mf{S}_t$, i.e., the $\G$-translates of ${\frak S}_t$ cover $G$. \end{nprop}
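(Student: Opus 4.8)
The plan is to run the classical reduction-theory argument, modelled on the action of $SL_2(\Z)$ on the complex upper half-plane, where the number $\sqrt3/2$ is exactly the imaginary part of the corner $e^{i\pi/3}$ of the standard fundamental domain. By the Iwasawa decomposition \eqref{iw:fd} write $g=uak$; since left multiplication by an element of $\Gamma\cap U$ alters only the $U$-factor of the Iwasawa decomposition, it suffices to produce $\g\in\Gamma$ with $\iw_A(\g g)\in A_t$, for then a further left translation by a suitable element of $\Gamma\cap U$ moves the $U$-factor into $U_{\mc D}$ without disturbing the $A$- or $K$-factor, landing us in $\mf S_t=U_{\mc D}A_tK$. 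So the whole task is to control the diagonal component.

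For this, consider the height $\mathrm{ht}(x):=\iw_A(x)^{2\rho}$ on $G$ and seek $\g_0\in\Gamma$ maximizing $\mathrm{ht}(\g g)$ over $\g\in\Gamma$. Existence of a maximizer is a discreteness statement: writing $\rho=\sum_i\omega_i$ by \eqref{rho} and realizing $\iw_A(x)^{\omega_i}$ as a fixed constant times $\|\pi_i(x^{-1})v_i\|^{-1}$, where $(\pi_i,V_i)$ is the fundamental representation of highest weight $\omega_i$, $\|\cdot\|$ a $K$-invariant norm, and $v_i$ a highest-weight vector lying in the Chevalley lattice, one finds that $\mathrm{ht}(\g g)^{-1}$ is a fixed positive multiple of $\prod_i\|\pi_i(g^{-1})(\g^{-1}v_i)\|^2$. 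As $\g$ ranges over $\Gamma$ the tuples $(\g^{-1}v_1,\dots,\g^{-1}v_\ell)$ run over a $\Gamma$-orbit inside the lattice $\prod_iV_i(\Z)$, which is discrete, closed, and bounded away from $0$; pushing forward by the fixed automorphism $\bigoplus_i\pi_i(g^{-1})$ and noting that each factor $\|\pi_i(g^{-1})(\g^{-1}v_i)\|$ is bounded below, one sees the product is a proper function on this discrete set, so its infimum --- equivalently, the supremum of $\mathrm{ht}$ --- is attained. Replacing $g$ by $\g_0g$ and then translating on the left by an element of $\Gamma\cap U$, we may assume $\mathrm{ht}(g)=\max_{\g\in\Gamma}\mathrm{ht}(\g g)$ and that the $U$-factor $u$ of $g=uak$ lies in a fundamental domain $U_{\mc D}$ chosen so that its image in $U/[U,U]\cong\R^\ell$ lies in $[-\tfrac12,\tfrac12]^\ell$ --- legitimate because $\Gamma\cap U$ contains $x_{\a_i}(\Z)$ for every $i$, so the simple-root coordinates may be reduced modulo $\Z^\ell$.

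Now fix a simple root $\a_i$ and perform the rank-one reduction. Let $P_i=L_iN_i$ be the standard parabolic of semisimple rank one attached to $\a_i$, with Levi $L_i=\langle A,U_{\a_i},U_{-\a_i}\rangle$ and unipotent radical $N_i=\prod_{\b\in\D_+\setminus\{\a_i\}}U_\b$, so that $U=N_i\rtimes U_{\a_i}$ and $N_i\trianglelefteq P_i$; let $H_i=\langle U_{\a_i},U_{-\a_i}\rangle\subseteq L_i$ be the root-$SL_2$ subgroup, defined over $\Z$, so that $\psi_i(SL_2(\Z))\subseteq\Gamma$ for the defining homomorphism $\psi_i\colon SL_2\to G$ with image $H_i$. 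Write $u=u'u_{\a_i}$ with $u'\in N_i$ and $u_{\a_i}\in U_{\a_i}$, and split $a=\zeta a^{(i)}$ with $a^{(i)}\in\exp(\R h_i)$ and $\zeta$ in the subtorus of $A$ on which $\a_i$ vanishes, which is central in $L_i$. Using that $H_i$ normalizes $N_i$, that $\zeta$ is central in $L_i$, and that the Iwasawa decomposition of $G$ is compatible with those of $L_i$ and of $H_i\cong SL_2$ (or $PGL_2$), one computes, for $\g'\in\psi_i(SL_2(\Z))$,
\[ \mathrm{ht}(\g'g)\ =\ \zeta^{2\rho}\cdot\imag\big(\g'_{SL_2}\cdot z\big),\qquad z:=x_0+iy_0, \]
where $\zeta^{2\rho}>0$, $y_0=a^{\a_i}$, and $|x_0|\le\tfrac12$ are all independent of $\g'$; here one uses $\langle2\rho,h_i\rangle=2$ (from \eqref{rho:h_i}), which makes $2\rho$ restrict to $\a_i$ on $\exp(\R h_i)$. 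Maximality of $\mathrm{ht}(g)$ now forces $z$ to have maximal imaginary part in its $SL_2(\Z)$-orbit, whence $|z|\ge1$; combined with $|x_0|\le\tfrac12$ this yields $a^{\a_i}=y_0\ge\sqrt{1-\tfrac14}=\sqrt3/2>t$. Since $\a_i$ was an arbitrary simple root, $a\in A_t$ and $g\in\mf S_t$.

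The one genuinely non-formal ingredient is the rank-one reduction just described: checking that, after absorbing the non-$\a_i$ root directions into $N_i$ and the $\a_i$-neutral torus, the action of the integral $\a_i$-$SL_2$ on the height $\mathrm{ht}$ is precisely the $SL_2(\Z)$-action on an imaginary part. The inputs to this --- the decomposition $U=N_i\rtimes U_{\a_i}$ with $N_i\trianglelefteq P_i$, the centrality in $L_i$ of the $\a_i$-neutral torus, the compatibility of the three Iwasawa decompositions, and $\langle2\rho,h_i\rangle=2$ --- are all standard structure theory, but have to be assembled with some care; once they are in place, the constant $\sqrt3/2$ is forced by the single classical fact that a point of maximal imaginary part in an $SL_2(\Z)$-orbit has modulus at least $1$. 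The existence of the height-maximizer is routine geometry of numbers, and the unipotent clean-up is immediate given a box-shaped choice of $U_{\mc D}$.
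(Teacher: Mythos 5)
Your proof is correct and follows the same reduction-theory strategy that the paper attributes to Borel and sketches in \S\ref{remark231}: extremize a height $\iw_A(\g g)^{\rho}$ over the $\Gamma$-orbit (the ``minimum principle''), then for each simple root $\a_i$ use the integral $SL_2$ through $\pm\a_i$ to show the Iwasawa $\a_i$-coordinate at a maximizer is at least $\sqrt{3}/2$. The only difference from the paper's sketch is cosmetic --- \S\ref{remark231} minimizes $\|\g g v_\rho\|$ using the single irreducible representation $V^\rho$, whereas you realize $\iw_A(\cdot)^\rho$ as a product of reciprocal norms over the fundamental representations $V^{\omega_i}$; both implement the same height, and your properness argument for the product over the discrete lattice orbit is sound.
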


\tpoint{Remark}\label{remark231}   One of the key points in the proof of this proposition is the following minimum principle (which will be important for us later).  Let $V^\rho$ be the highest weight representation of $G$ corresponding to the dominant integral weight $\rho: \mf{h} \rr \C$ from (\ref{rho}).
 This representation comes equipped with  a $K$-invariant norm $|| \cdot ||$. Let $v_{\rho} \in V^{\rho}$ denote a highest weight vector. Then for any $g \in G,$ it is not hard to see  that the function $\Psi_g :  \gamma  \mapsto \| g^{-1}\gamma   v_{\rho} \|$ achieves a minimum value on $\Gamma$ (see \cite{bor:groupes}). Furthermore, this minimum is achieved by some
  $\gamma \in \Gamma$  such that $\gamma^{-1} g \in \mf{S}_t$.


\vspace{.5cm}
\begin{center}
{\bf B. Affine Lie Algebras }
\end{center}

\spoint Let $\wh{\bf{C}}$ denote the $(\ell+1)\times (\ell+1)$ affine Cartan matrix corresponding to $\bf C,$ and $\hf{g}$
the one-dimensional central extension of the loop algebra corresponding to the finite dimensional Lie algebra $\frak{g}$
associated to {\bf C}.
   The extension  \begin{equation}\label{new2.4.1}
\hf{g}^e  \ \ = \ \  \hf{g} \,\oplus \, \R \dd
\end{equation}
of $\hat{\frak{g}}$ by the \emph{degree derivation} $\dd$ is the (untwisted) affine Kac-Moody algebra associated to $\car$ (see [Kac, Chapters 6 and 7]).

 Let $\hf{h} \subset \hf{g}$ be a Cartan subalgebra  and define  the extended Cartan subalgebra \be{h:ext} \hf{h}^e  \ \ := \ \  \hf{h} \, \oplus \, \R \dd \,. \ee
 Let $\hed$ denote the (real) algebraic dual of $\he$ and continue to denote the natural pairing  as \be{dualpair} \la \cdot, \cdot \ra \, :\, \hed \times \he  \ \rr  \ \R\,. \ee
 The set of simple affine roots will be denoted by  \be{simp:aff:roots} \wh{\Pi} \ \ = \ \  \{ a_1, \ldots, a_{\ell+1} \}  \ \ \subset \ \  \hed\,. \ee Similarly, we write \be{simp:aff:coroots} \wh{\Pi}^{\vee}  \ \ = \ \  \{ h_1, \ldots, h_{\ell+1} \}  \ \ \subset \ \  \hf{h} \ \ \subset \ \ \he \ee for the set of simple affine coroots.  Note that the simple roots $a_i: \hf{h}^e \rr \R$ satisfy the relations\begin{equation}\label{new2.16}
\aligned \la a_i , \dd \ra & \ \ = \ \   0  \ \ \text{ for } \ i \, =\, 1,\, \ldots, \,\ell \\ \text{and} \ \ \  \ \  \
\la a_{\ell+1} , \dd \ra &\ \ = \ \   1\,.
\endaligned
\end{equation}
A more explicit description of the $a_i$ will be given in section~\ref{sec:2.5}.

We define the affine root lattice  as  $\hQ = \zee a_1   +   \cdots   +   \zee a_{\ell+1}$
and the affine coroot lattice as  $\hQ^{\vee} =  \zee h_1 + \cdots  + \zee h_{\ell+1}$.
We shall denote the subset of non-negative integral linear combinations of the $a_i$ (respectively, $h_i$) as $\hQ_+$ (respectively,  $\hQ^{\vee}_+$).  The integral weight lattice is defined as
\begin{equation}\label{integralweightlatticedef}
\hLambda  \ \  :=  \ \  \{ \lambda \in \hf{h}^* \ | \   \langle \lambda, h_i \rangle \,  \in \,  \zee  \, \ \text{ for } \, i\,=\, 1, \ldots, \ell+1  \}\,.
\end{equation} We  regard $\hLambda$  as a subset of $\hed$  by declaring that \be{2.20} \la \lambda , \dd \ra \ \  = \ \  0 \,   \ \ \ \text{for} \ \  \lambda \,\in\,\wh{\Lambda}\,. \ee  The lattice $\hLambda$ is spanned by the fundamental affine weights  $\Lambda_1, \ldots, \Lambda_{\ell+1},$ which are defined by the conditions that  \be{aff:wts} \langle \Lambda_i, h_j \rangle \ \  = \ \ \left\{
                                                          \begin{array}{ll}
                                                            1\,, & i\,=\,j\, \\
                                                            0\,, & i\,\neq\,j\,
                                                          \end{array}
                                                        \right. \ \ \
 \text{ for ~} 1 \,\le \, i,j\, \le \, \ell+1 \,.\ee
Note that the dual space $(\hf{h}^e)^*$ is spanned by $a_1, a_2, \cdots, a_{\ell+1}, \Lambda_{\ell+1}.$

For any  $a\in \wh{Q}$ define
 \begin{equation}\label{rootspacedef}
 \hf{g}^{a} \ \ := \ \  \{ x \in \hf{g} \ |\  [h, x] \,= \,\langle a, h \rangle x\, , \  \text{ for all } h \in \hf{h}^e \}\,.
 \end{equation}
  The set of nonzero $a\in \hQ$ such that $\dim  \hf{g}^{a} > 0$ will be called the roots of $\hf{g}$ and denoted by $\wh{\Delta}.$
   The sets $\wh{\Delta}_+:= \wh{\Delta} \cap \hQ_+$ and $\hdelta_{-}:= \hdelta \cap ( - \hQ_{+})$ will be called the sets of positive and negative  affine roots, respectively.  We have that  $\wh{\Delta}=\wh{\Delta}_+\sqcup \wh{\Delta}_-$, i.e.,  every element in $\hdelta$ can be written as a linear combination of elements from $\hPi$ with all positive integral or all negative integral coefficients.

For each $i \in \{ 1, 2, \ldots, \ell+1 \}$ we define the reflection $w_i: \hed \rr \hed$ by the formula \be{sim:ref}  w_i : \lambda \   \mapsto  \  \lambda  \, - \, \la \lambda, h_i \ra \,a_i \ \ \ \ \text{ for } \ \  \  \lambda \, \in\, \hed\,. \ee The Weyl group $\aw \subset \Aut(\hed)$  is the group generated by the elements $w_1,\ldots,w_{\ell+1}$.  The dual action of $\aw$ on $\he$ is defined by the formula
 \begin{equation}\label{affw:hact} \  \la \lambda, w \cdot h \ra \ \  =  \ \   \la w^{-1} \lambda, h \ra  \ \ \ \ \text{ for all }  \ \lambda \,\in\, \hed \, , \  h \,\in \,\he \, , \ \text{and} \  w \,\in \,\aw\,. \end{equation}
The roots decompose as
 \begin{equation}\label{DDpDm}
  \wh{\Delta} \ \  = \ \  \  \widehat{\Delta}_W  \  \sqcup \ \wh{\Delta}_I  \,, \\
 \end{equation}
where $\widehat{\Delta}_W$ (known as the ``real roots'' or ``Weyl roots'') are the $\wh{W}$-translates of  $\wh{\Pi}$, and $\wh{\Delta}_I$ (known as the ``imaginary roots'') is its complement in $\wh{\Delta}$.  These sets will be described explicitly in (\ref{deltawhat})-(\ref{deltaihat}) below.    Each imaginary root is fixed by $\wh{W}$.  The space $\hf{g}^{a}$ is 1-dimensional for $a\in \widehat{\Delta}_W$ and is $\ell$-dimensional for $a\in\widehat{\Delta}_I$.
Coroots of elements
 $a \in \hdelta_W$ can be defined by the formula
\begin{equation}\label{gencorootdef}
 h_a \ \ := \ \  w^{-1} h_j \,  \in \, \hf{h}\, ,
\end{equation}
where $w$ is an element of $\aw$  such that $w a =a_j$ (this is shown to be well-defined in \cite[\S5.1]{kac}).

\newcommand{\cc}{\mathbf{c}}

\spoint\label{sec:2.5} We shall now  give a more concrete description of the affine roots and coroots in terms of the underlying finite dimensional Lie algebra $\mf{g}$. It is known that $\hf{g}$ is a one-dimensional central extension of the \emph{loop algebra} of the finite dimensional Lie algebra $\mf{g}.$ We denote this one-dimensional center by $\R \cc,$ where $\cc \in \hf{h}.$
The Cartan subalgebra in the extended affine algebra $\hf{g}^e$ can then be written as \be{extended:cartan:c} \hf{h}^e  \ \ = \ \  \R \cc  \, \oplus \,  \mf{h} \, \oplus \, \R \dd\,, \ee
 in which we have assumed (as we may) that $\frak h$ coincides with the fixed Cartan subalgebra of $\frak g$ chosen in section~\ref{finrtsystems}. The finite dimensional roots $\alpha \in \Delta$ can then be extended to elements of $\hed$ by stipulating that
  \be{2.28} \la \alpha, \cc \ra \ = \  \la \alpha, \dd \ra \ = \ 0 \ \  \     \text{ for each  }  \ \alpha \, \in \,  \Delta \, . \ee
    In particular the element $\rho$ defined in (\ref{rho}), which for us always represents an object from the classical group, extends to an element of $\hed$ that is trivial on $\cc$ and $\dd$.
  We may then identify the first $\ell$ simple roots of the affine Lie algebra with those of its classical counterpart:
   \be{a:alpha}a_i  \ \ = \ \  \alpha_i \ \text{ for } \ i =1, 2, \ldots, \ell\,. \ee We likewise extend the fundamental weights $\omega_j$ for $j =1, \ldots, \ell$   to elements of  $\hed$ by setting \be{omega:ext} \la \omega_j , \cc \ra  \ = \  \la \omega_j, \dd \ra  \ = \  0\,. \ee Also, we can identity the affine coroots $h_1, h_2, \ldots, h_{\ell}$ with the corresponding classical coroots defined in \S\ref{finrtsystems} under the same name. It remains to describe both $a_{\ell+1}$ and $h_{\ell+1}$ in terms of the underlying finite-dimensional root data.

Let  $\iota \in \wh{\Delta}$ be the minimal positive imaginary  root. It is characterized as the unique linear map $\iota \in \hed$ satisfying the condition that
\begin{equation}\label{iotaX}
 \aligned \la \iota, X \ra & \ \ = \ \  0 \ \ \ \  \text{ for } \  X \, \in \,\R \cc \oplus \mf{h}  \\
  \text{and} \ \ \ \ \ \ \la \iota , \dd \ra & \ \ = \ \ 1\,. \endaligned
\end{equation}
 We then have the following explicit description of the real and imaginary roots of $\hf{g}^e$:
   \be{affine:roots}
  \widehat{\Delta}_W & = & \ \  \{ \alpha \, + \, n \,\iota \,\mid \, \alpha \, \in \,  \Delta\,, \ n \, \in \,  \Z \} \label{deltawhat} \\
  \wh{\Delta}_I  \ & = & \ \  \{n\,\iota \,|\, n\,\neq\,0  \,,\, n \in \, \Z \}\,. \label{deltaihat}
\ee
 The classical roots $\Delta$ can be regarded as the subset of $\wh{\Delta}_W$ with  $n=0$ in this parametrization. One then has \be{newroot} a_{\ell+1} \ \ = \ \  -\, \alpha_0  \, + \,  \iota\,, \ee
where we recall that $\alpha_0$ denotes the highest root of the underlying finite-dimensional root system.
%

We shall now give a similar description of $h_{\ell+1}.$ To do so, we recall from \cite[(6.2.1)]{kac} that one can define a symmetric,  non-degenerate, invariant bilinear form $(\cdot | \cdot)$ on $\hf{h}^e.$  The form is first defined on $\hf{h}$ in terms of certain labels of the affine Dynkin diagram (coming from the coefficients of $\a_0$ when expanded as a sum of the finite simple roots), and then is extended to all of $\hf{h}^e$ by setting
  \begin{equation}\label{2.35}
  \aligned
  (h_i \mid \dd) & \ \ = \ \  0\,, \ \ \   \text{ for } \ i\,=\, \, 1, \ldots, \ell\,, \\ (h_{\ell+1} \mid \dd ) & \ \ = \ \ 1\,, \\
\text{and} \ \ \ \ \ \ \ \ \ \ \ \ \ \ \ \ \
(\dd \mid \dd) &\ \ = \ \  0\,. \endaligned
  \end{equation}
 The form $(\cdot \mid \cdot)$ also induces a symmetric bilinear form on $\hed$, which we continue to  denote by $(\cdot \mid \cdot)$, and which  has the following characterization:
\begin{equation}\label{norm:affkilling}
 \aligned (a_i \mid a_j) & \ \ = \ \ (\alpha_i, \alpha_j)\,,  \ \  \  \text{ for } \ i, j \, \in \, \{ 1, \ldots, \ell \}\,, \\
	(a_i \mid a_{\ell + 1}) & \ \ = \ \  (\alpha_i, - \alpha_0)\,,  \ \ \text{ for } \  i\,\in\,\{1, \ldots, \ell\} \, ,   \\
\text{and} \ \ \ \ \ \ \	(a_{\ell+1} \mid a_{\ell+1}) & \ \ = \ \  (\alpha_0, \alpha_0) \ \  = \ \  2 \,. \endaligned
\end{equation} It is easy to see that the above equations imply that \be{norm:affkilling:iota} (\iota \mid \iota)  \ = \  0 \ \text{  and  } \  (\iota \mid a_i ) \  = \ 0 \text{ for } i=1, \ldots, \ell. \ee
 Note that together with the relations
\begin{equation}\label{2.37}
 \aligned (\Lambda_{\ell+1} \mid \Lambda_{\ell+1} ) & \ \ = \ \  0\,, \\
	(a_i \mid \Lambda_{\ell+1} ) & \ \ = \ \  0 \,, \ \ \ \text{ for } i =1, \ldots, \ell\,, \\
\text{and} \ \ \ \ \ \ \   	(a_{\ell+1} \mid \Lambda_{\ell+1} ) &\ \ = \ \   1\,, \endaligned
\end{equation}
(\ref{norm:affkilling}) and (\ref{norm:affkilling:iota}) completely specify the form $(\cdot | \cdot)$ on $\hed$ (and hence $\hf{h}^e$).  We define normalized coroots by the formula
\be{hprimeh}
  h'_{a_i} \ \  := \ \  \smallf{ (a_i| a_i)}{2}\,h_i \, , \ \ i\,\in\,\{1,\,\ldots,\,\ell+1\}\,. \ee  More generally for
  \begin{equation}\label{2.39}
  b  \ \ = \ \  \sum_{i\,=\,1}^{\ell+1} \kappa_i \,a_i
\end{equation}
  we set
  \begin{equation}\label{2.40}
   h'_b \ \ := \ \  \sum_{i\,=\,1}^{\ell+1} \kappa_i \,h'_{a_i}\,.
  \end{equation}  Using (\ref{a:alpha}) and (\ref{newroot}) these conventions  give a definition for   $h'_{\iota}$.

  For any real root $b$ we have defined the corresponding coroot $h_b$ in (\ref{gencorootdef}) in terms of the Weyl group action. One then has  that \be{h:h'} h'_b  \ \ = \ \ \smallf{ (b|b) }{2}\,h_b \ee  (see for example \cite[(5.1.1)]{kac}).
   We now set
\begin{equation}\label{2.41}
 h_{\iota} \ \  := \ \  \smallf{2}{(\alpha_0, \alpha_0)}\, h'_{\iota} \ \  = \ \  h'_{\iota}\,,
\end{equation}
 using $(\alpha_0,\alpha_0)=2$. Suppose $a = \alpha + n \iota \in \wh{\Delta}_W$ with $\alpha \in \Delta$ and $n \in \zee$ as in (\ref{deltawhat}).  Then in fact \begin{equation} \label{h_a:h'} h_a  \ \ = \ \  \smallf{(\alpha \mid \alpha)}{(a \mid a)} \,h_{\alpha}  \ + \  \smallf{2}{ (a \mid a) } \, n \,h_{\iota}\, \ \ = \ \  h_{\alpha}  \ + \  \smallf{2}{ (\alpha \mid \alpha) } \, n \,h_{\iota}\,,
\end{equation} where in the second equality we have used the fact that $(a \mid a) = ( \alpha \mid \alpha)$, a consequence of (\ref{norm:affkilling:iota}).  In particular we also have the formula\be{newcoroot} h_{\ell+1}  \ \ =  \ \ h_{- \alpha_0}  \ +  \ h_{\iota}\,, \ee
  in analogy to (\ref{newroot}).
 One may check that $h_{\iota}$ is also a generator for the one-dimensional center $\R {\bf c}$ of $\hf{g}^e$ and that we have the direct
  sum decompositions
  \be{extended:cartan} \hf{h}^e  \ \ = \ \   \R h_\iota \, \oplus \, \mf{h} \, \oplus \,  \R \dd\ \ \ \ \text{and} \  \ \ \
 \hf{h} \ \ = \ \ \R h_\iota\,\oplus\,\mf{h}
  \ee similar to (\ref{extended:cartan:c}).
This   decomposition of $\hf{h}^e$ will be frequently used later in this paper.

\vspace{.5cm}
\begin{center} {\bf C. Lemmas on the Affine Weyl Group}\end{center}

\spoint\label{sec:What} The affine Weyl group $\aw$ was defined  above  as the group generated by the reflections $w_1,\ldots,w_{\ell+1}$ from (\ref{sim:ref}). It also has a more classical description as the semidirect product \be{affweyl:semidirect} \wh{W}  \ \ = \ \  W \ltimes Q^{\vee}. \ee
More concretely, the elements $b$ of the coroot lattice $Q^{\vee}$  correspond to translations $T_b$ in the Weyl group $\wh{W}.$ Recall that $T_b$ fixes $\iota$, as do all elements in $\aw$. The action of $T_b$ on $\l\in \operatorname{Span}(a_1,\ldots,a_\ell)$ is given by the formula
 \begin{equation}\label{Tbdef}
 \qquad\qquad\qquad\qquad\qquad
    T_b\,:\,\l  \ \mapsto \ \l\,+\,\la \l, b \ra \iota \qquad\qquad\qquad\qquad\qquad
    \text{(\cite[(4.2)]{ga:ragh}).}
 \end{equation} It is also possible to obtain a general formula for the action of $T_b$ on an element in $\hed,$ though we will not need it here.

  We shall now state
 a formula for the action of $\wh{W}$ on  $\he$, which we shall make frequent use of later.
   The general element of  $\hf{h}^e$ can be written in terms of the decomposition (\ref{extended:cartan}) as $m h_{\iota}  + h +  r \dd$.  If we  use (\ref{affweyl:semidirect}) to factor  $w \in \wh{W}$ as $w= \t{w} T_b$ for some  $\t{w} \in W$ and $b \in Q^{\vee},$ then we have
    \begin{multline}\label{affw:actionandcenter} w \cdot \left( m h_{\iota}  \, + \, h \,  + \,  r \,\dd\right )  \ \ = \ \  \left[-\,\frac{r\,(b, b)}{2}  \, + \,  (h, b) \, + \,  m\right] h_{\iota}  \  - \   r \, \t{w}(b) + \t{w}(h)  \  + \   r\, \dd \end{multline}
 (see \cite[p. 309]{ga:ragh}).  Note that in particular $\wh{W}$ fixes $h_\iota$, $\iota$, and $\cc$.  Combined with the fact that (\ref{gencorootdef}) is well-defined, this shows that the coroots satisfy
 \begin{equation}\label{weyloncoroot}
    w\,h_a \ \ = \ \ h_{wa}
 \end{equation}
 for any $w\in \wh{W}$ and $a\in\wh{\Delta}$.

\spoint For each $w \in \wh{W}$ we denote by $\ell(w)$ the length of $w$, i.e., the minimal length of a word in $w_1,\ldots,w_{\ell+1}$ which represents $w$.  We then have the following  estimate:

\begin{nlem} \label{l(w):ineq} Let $\|\cdot\|$ be an arbitrary norm on $\frak h$.  Then there exist constants $E,\, E' > 0$ depending only on the (finite-dimensional) root system $\Delta$ and the choice of norm $\|\cdot\|$ such that
\begin{equation}\label{new2.49}
    \ E'   \,|| b ||\, -\, \#(\Delta_+) \, \ \ \leq \ \ \ell(w) \ \ = \ \ \ell(w^{-1})  \ \ \leq  \
 \ E    \,|| b ||\, +\, \#(\Delta_+) \,,
\end{equation}
 for any element   $w \in \wh{W}$ of the form   $w=T_b \t{w}$ or $w=\t{w}T_b$, where  $b \in Q^{\vee}$ and $\t{w} \in W.$
 \end{nlem}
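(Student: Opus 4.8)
The plan is to compute $\ell(w)$ via the standard root‑inversion description of the length function on a Kac--Moody Weyl group, and then to recognize the resulting quantity, up to an additive error of size $\#(\Delta_+)$, as a fixed norm on $\mathfrak h$ evaluated at $b$. First I would recall that $\ell(w)=\ell(w^{-1})$ and that
\[
\ell(w)\ \ =\ \ \#\bigl\{\,a\in\widehat{\Delta}_W\cap\hQ_+\ :\ w\,a\in-\hQ_+\,\bigr\}\,,
\]
i.e.\ $\ell(w)$ counts the positive real roots sent to negative roots by $w$ (a standard fact; cf.\ \cite{kac}). By (\ref{deltawhat}), together with the fact that $\iota$ is a non‑negative integral combination of the simple affine roots, the set of positive real roots is $\{\alpha+n\iota:\alpha\in\Delta,\ n\ge 1\}\sqcup\Delta_+$. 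For $w=T_b\t{w}$ with $\t{w}\in W$, the formula (\ref{Tbdef}) together with the facts that $W$ acts on $\operatorname{Span}(a_1,\dots,a_\ell)$ as the classical Weyl group and fixes $\iota$ gives $w(\alpha+n\iota)=\t{w}\alpha+\bigl(n+\langle\t{w}\alpha,b\rangle\bigr)\iota$.

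Next I would carry out the inversion count for $w=T_b\t{w}$ by separating the roots with $n=0$ from those with $n\ge 1$. The $n=0$ part contributes the number of $\alpha\in\Delta_+$ with $w\alpha$ negative, which is some integer in $[0,\#(\Delta_+)]$. For the $n\ge 1$ part I substitute $\beta=\t{w}\alpha$ (a bijection of $\Delta$) and pair each $\beta\in\Delta_+$ with $-\beta\in\Delta_-$: a direct count of the values of $n\ge 1$ making $n+\langle\pm\beta,b\rangle$ negative (or zero with a negative classical part) shows that the number of inversions of the form $\pm\beta+n\iota$, $n\ge 1$, equals $|\langle\beta,b\rangle|$ when $\langle\beta,b\rangle\ge 0$ and $|\langle\beta,b\rangle|-1$ when $\langle\beta,b\rangle<0$. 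Summing over $\beta\in\Delta_+$, the $n\ge 1$ contribution lies between $N(b)-\#(\Delta_+)$ and $N(b)$, where $N(b):=\sum_{\beta\in\Delta_+}|\langle\beta,b\rangle|$. Adding the two contributions yields
\[
N(b)-\#(\Delta_+)\ \ \le\ \ \ell(w)\ \ \le\ \ N(b)+\#(\Delta_+)\,.
\]

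Finally, I would observe that $N$ is a norm on the finite‑dimensional real space $\mathfrak h$: it is visibly a seminorm, and it is definite because $\Delta_+$ spans $\mathfrak h^*$. Moreover $N$ is $W$‑invariant, since $\{\t{w}^{-1}\beta:\beta\in\Delta_+\}$ meets each pair $\{\gamma,-\gamma\}$ exactly once and $|\langle\gamma,b\rangle|=|\langle-\gamma,b\rangle|$; this handles the other factorization, for if $w=\t{w}T_b$ then $w=T_{\t{w}b}\,\t{w}$ by (\ref{affweyl:semidirect}) and $N(\t{w}b)=N(b)$, so the displayed inequality again applies. Since any two norms on $\mathfrak h$ are equivalent, there exist constants $E,E'>0$ depending only on $\Delta$ and $\|\cdot\|$ with $E'\|b\|\le N(b)\le E\|b\|$ for all $b\in\mathfrak h$, and inserting this into the displayed inequality gives (\ref{new2.49}). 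The one step requiring genuine care is the inversion bookkeeping in the second paragraph — in particular the boundary cases where $n+\langle\t{w}\alpha,b\rangle$ vanishes, so that $w(\alpha+n\iota)$ lands on a classical root — together with the check that all the resulting $\pm1$ discrepancies are absorbed into the additive term $\#(\Delta_+)$; the remaining ingredients (the inversion formula, $\ell(w)=\ell(w^{-1})$, and equivalence of norms) are standard.
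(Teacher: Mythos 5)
Your proposal is correct and lands on the same intermediate quantity $N(b)=\sum_{\beta\in\Delta_+}|\langle\beta,b\rangle|$ as the paper, but you get there by a self-contained route. The paper's proof treats the case $w=T_b\t{w}$, cites Iwahori--Matsumoto's explicit length formula $\ell(w)=\sum_{\alpha\in\Delta_+}\bigl|\langle\alpha,b\rangle-\chi_{\Delta_-}(\t{w}^{-1}\alpha)\bigr|$, and applies the triangle inequality to squeeze $\ell(w)$ between $N(b)\pm\#(\Delta_+)$; it then bounds $N(b)$ above and below by $\sum|d_i|$ (writing $b=\sum d_i\omega_i^\vee$) before appealing to equivalence of norms. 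You instead rederive the needed count from the general Coxeter fact $\ell(w)=\#\{a\in\widehat\Delta_+\cap\widehat\Delta_W: wa<0\}$, stratifying by $n=0$ versus $n\ge 1$ and pairing $\beta$ with $-\beta$ to get the same $\pm\#(\Delta_+)$ error, and you observe directly that $N$ is a $W$-invariant norm, which also cleanly handles the second factorization $w=\t w T_b=T_{\t w b}\,\t w$. What your version buys is not having to invoke the Iwahori--Matsumoto formula as a black box, at the cost of a bit more bookkeeping in the inversion count (which you carried out correctly, including the boundary cases where $n+\langle\t w\alpha,b\rangle=0$); what the paper's version buys is brevity. Logically the two proofs are equivalent, since your inversion count is essentially a proof of the cited formula in the regime used.
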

\begin{proof}
 Assume that $w$ has the form $T_b\t{w}$ (the statement for $w=\t{w}T_b$ is equivalent).
 From \cite[Proposition 1.23]{iwamat} one has \be{iwm:len}  \ell(w)  \ \ = \ \ \sum_{ \alpha \, \in \,  \Delta_+} | \la \alpha, b \ra - \chi_{\Delta_-}(\t{w}^{-1} \alpha) |\,, \ee where $\chi_{\Delta_-}$ is the characteristic function of $\Delta_-.$  By the triangle inequality
 \be{l(w):ineq1}  \ell(w)  \ \ \leq  \ \   \sum_{\alpha \, \in \,  \Delta_+} (| \la \alpha, b \ra | + 1) \ \ =  \ \
 \sum_{\alpha \, \in \,  \Delta_+} | \la \alpha, b \ra | \ + \ \#(\Delta_+)
  \,. \ee Similarly, we deduce  from  (\ref{iwm:len}) that
  \be{l(w):ineq1b}  \ell(w)  \ \ \geq  \ \   \sum_{\alpha \, \in \,  \Delta_+} (| \la \alpha, b \ra | - 1) \ \ = \ \
 \sum_{\alpha \, \in \,  \Delta_+} | \la \alpha, b \ra | \ - \ \#(\Delta_+)
  \,. \ee
Writing $b = \sum_{i=1}^\ell d_i \omega_i^{\vee}$ in terms of the fundamental coweights $\omega_i^{\vee}$  and letting \be{2.52} M \ \ = \ \ \max_{\srel{ \ \alpha\,\in\,\Delta_+}{1\le i \le \ell}}\la \alpha,\omega_i^{\vee} \ra\, ,\ee
we have 
 $| \la \alpha, b \ra |\leq    M  \,\sum_{i=1}^{\ell} | d_i |$
  for any  $\alpha \in \Delta_+$.  On the other hand,
   \be{2.54} \sum_{\alpha \, \in \,  \Delta_+} | \la \alpha, b \ra | \ \  \geq
   \ \  \sum_{\alpha \, \in \,  \Pi} | \la \alpha, b \ra | \ \ =
   \ \  \sum_{i\,=\,1}^\ell |   \la   \alpha_i, b \ra | \ \  =  \ \ \sum_{i\,=\,1}^{\ell} | d_i |\,. \ee
As all norms on a finite dimensional vector space are equivalent, the assertions of the lemma follow. \end{proof}

\spoint \label{flip}   For any element $w \in \wh{W}$ we set
 \begin{equation}\label{Deltawdef}
 \wh{\Delta}_w  \ \ =  \ \ \{ a \in \wh{\Delta}_+\,\mid\, w^{-1} a < 0 \}\,.
\end{equation}
It is a standard result that
\begin{equation}\label{lengthandcard}
\#( \wh{\Delta}_w ) \ \ = \ \     \ell(w) \ \ = \ \ \ell(w^{-1}) \,.
\end{equation}
Moreover, given a  reduced decomposition $w= w_{i_r} \ldots w_{i_1},$  $r=\ell(w)$, the set $\wh{\Delta}_w$ can be described as \be{delta_w} \wh{\Delta}_w  \ \ = \ \  \{ \beta_1, \ldots, \beta_r \}\,, \ee where 
$\beta_j =  w_{i_r} \cdots w_{i_{j+1}} (a_{i_j})$
for $j=1, \ldots, r.$ We also set \be{neg:flipped} \wh{\Delta}_{-, w} \ \  = \ \  \{ a \in \wh{\Delta}_-\, |\, w a > 0 \} \ \ = \ \  w^{-1}\,\wh{\Delta}_w  \,. \ee   Similarly to (\ref{delta_w}) it equals
 \be{delta_-w} \wh{\Delta}_{-, w}  \ \ = \ \  \{ \gamma_1, \ldots, \gamma_r \}, \ee where
$\gamma_j  = w^{-1} \beta_j =   w_{i_1} \cdots w_{i_{j}}(a_{i_j}) =
- w_{i_1} \cdots w_{i_{j-1}}(a_{i_j})$
for $j=1,\ldots,r$.

\spoint Considering the classical Weyl group $W$ as the subgroup of $ \wh{W}$ generated by $w_1,\ldots,w_\ell$, let $W^{\theta}$ denote the {\em Kostant coset representatives}    for   the quotient $W \backslash \wh{W} $:
 \be{kostant} W^\theta \ \ = \ \ \{  w \,\in\,\wh{W} \,\mid \,  w^{-1} \alpha_i > 0 \,  \ \text{ for } \,  \   i=1, \ldots,\ell \}\,. \ee
 For any $i =1, \ldots, \ell$ and $w \in W^{\theta}$ let us write
 \begin{multline}\label{kappa_i} w^{-1} \alpha_i  \ \ = \ \  \sigma_i \  + \ \kappa_i(w^{-1}) \iota\,, \ \ \ \text{ where } \  \sigma_i \,\in\, \Delta \ \ \, \text{and} \ \  \kappa_i(w^{-1}) \, \in\, \zee_{\geq 0}\,, \end{multline}
which is possible since the Weyl translates of the $\alpha_i$ lie in $\wh{\Delta}_W$ (see (\ref{deltawhat})).

\begin{nlem} \label{kap:ineq} For any $w \in W^{\theta}$ and $i=1, \ldots, \ell,$ we have  that
\be{map:ineq} \kappa_i(w^{-1})  \ \ \leq  \ \ \ell(w) \ + \  1 \ \ = \ \  \ell(w^{-1}) + 1  \,.\ee \end{nlem}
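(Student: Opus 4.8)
The plan is to pin down $\kappa_i(w^{-1})$ exactly in terms of the translation part of $w$ under the decomposition $\wh{W}=W\ltimes Q^{\vee}$ of (\ref{affweyl:semidirect}), and then read off the bound from the exact Iwahori--Matsumoto length formula (\ref{iwm:len}) rather than from the cruder norm estimates of Lemma~\ref{l(w):ineq}. (One should resist the temptation to use Lemma~\ref{l(w):ineq} directly: its constants $E,E'$ are far too coarse to yield the sharp $\ell(w)+1$.)

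Concretely, I would write $w=T_b\,\t{w}$ with $b\in Q^{\vee}$ and $\t{w}\in W$; every element of $\wh{W}$ has such a form, so the length formula (\ref{iwm:len}) applies.  Since every element of $\wh{W}$ fixes $\iota$ and $T_{-b}$ acts on $\operatorname{Span}(a_1,\ldots,a_\ell)$ by (\ref{Tbdef}), a short computation gives
\begin{equation*}
 w^{-1}\a_i \ \ = \ \ \t{w}^{-1}T_{-b}\,\a_i \ \ = \ \ \t{w}^{-1}\a_i \ - \ \la\a_i,b\ra\,\iota\,.
\end{equation*}
Because $\t{w}\in W$ preserves $\Delta$, the vector $\t{w}^{-1}\a_i$ lies in $\Delta$, and since the decomposition $\b+n\iota$ (with $\b\in\Delta$, $n\in\Z$) of a real affine root is unique, comparison with (\ref{kappa_i}) yields $\sigma_i=\t{w}^{-1}\a_i$ and
\begin{equation*}
 \kappa_i(w^{-1}) \ \ = \ \ -\,\la\a_i,b\ra\,.
\end{equation*}
In particular $\kappa_i(w^{-1})\ge 0$ (as in (\ref{kappa_i}), forced by $w\in W^{\theta}$) amounts to $\la\a_i,b\ra\le 0$ for $i=1,\ldots,\ell$.

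Next I would substitute into (\ref{iwm:len}), $\ell(w)=\sum_{\a\in\Delta_+}|\la\a,b\ra-\chi_{\Delta_-}(\t{w}^{-1}\a)|$, and keep only the summand for $\a=\a_i$.  Using $\la\a_i,b\ra=-\kappa_i(w^{-1})\le 0$ and $\chi_{\Delta_-}(\sigma_i)\in\{0,1\}$, that summand equals $\kappa_i(w^{-1})+\chi_{\Delta_-}(\sigma_i)$, so
\begin{equation*}
 \ell(w) \ \ \ge \ \ \kappa_i(w^{-1}) \ + \ \chi_{\Delta_-}(\sigma_i) \ \ \ge \ \ \kappa_i(w^{-1})\,.
\end{equation*}
Since $\ell(w)=\ell(w^{-1})$ this gives $\kappa_i(w^{-1})\le\ell(w^{-1})\le\ell(w^{-1})+1$, which is the claim (in fact with a unit to spare).

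The computation is short, so there is no serious obstacle; the only real content is the bookkeeping that identifies $\kappa_i(w^{-1})$ with $-\la\a_i,b\ra$ — i.e.\ that passing to $w^{-1}$ turns $T_b$ into $T_{-b}$, that $\t{w}$ does not disturb the $\iota$-coefficient, and that $w\in W^{\theta}$ forces $\la\a_i,b\ra\le 0$ so that the absolute value in the extracted summand does not flip sign in the unwanted direction.
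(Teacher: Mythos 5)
The argument runs into a problem precisely because you have followed the sign in the paper's statement of (\ref{iwm:len}), which appears to be misprinted. A rank-one check already shows this: in $\wh{A_1}$ take $w=T_{h_\alpha}s_\alpha = s_\alpha T_{-h_\alpha}$, which is the reflection in the affine root $\alpha+\iota$ and has $\ell(w)=3$, while the formula as written (with factorization $w=T_b\t{w}$, $b=h_\alpha$, $\t{w}=s_\alpha$, and the $-\chi_{\Delta_-}(\t{w}^{-1}\alpha)$ term) gives $|2-1|=1$. The version that reproduces the length correctly is
\begin{equation*}
  \ell(T_b\t{w}) \ \ = \ \ \sum_{\alpha\,\in\,\Delta_+}\left|\,\la\alpha,b\ra \ + \ \chi_{\Delta_-}(\t{w}^{-1}\alpha)\,\right|,
\end{equation*}
or equivalently $\ell(\t{w}T_b)=\sum_{\alpha\in\Delta_+}|\la\alpha,b\ra-\chi_{\Delta_-}(\t{w}\alpha)|$. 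The paper never feels this, because in the proof of Lemma~\ref{l(w):ineq} only $|\chi_{\Delta_-}|\le 1$ is used, and the proof of Lemma~\ref{kap:ineq} itself does not invoke (\ref{iwm:len}) at all.

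With the corrected sign, your computation (which correctly identifies $\kappa_i(w^{-1})=-\la\alpha_i,b\ra$ for $w=T_b\t{w}$) yields for the $\alpha_i$-summand $|-\kappa_i(w^{-1})+\chi_{\Delta_-}(\sigma_i)|=\kappa_i(w^{-1})-\chi_{\Delta_-}(\sigma_i)$, and hence $\kappa_i(w^{-1})\le\ell(w)+\chi_{\Delta_-}(\sigma_i)\le\ell(w)+1$. In other words, you recover exactly the bound of the lemma, with the $+1$ appearing precisely when $\sigma_i<0$ — the same dichotomy as in the paper's proof. Your claimed strengthening $\kappa_i(w^{-1})\le\ell(w)$ is actually false: the element $w=w_{\ell+1}\in W^\theta$ in $\wh{A_1}$ has $\ell(w)=1$ but $w^{-1}\alpha_1=-\alpha_1+2\iota$, so $\kappa_1(w^{-1})=2$. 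The lemma's $+1$ is therefore sharp.

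As for methodology, your route is genuinely different from the paper's: the paper does not use the Iwahori--Matsumoto length formula here, but instead directly exhibits a string of roots $-\alpha_i+\iota,\ldots$ inside $\wh{\Delta}_w$ and invokes $\#\wh{\Delta}_w=\ell(w)$ from (\ref{lengthandcard}). The two arguments are morally the same count, but the paper's is more self-contained (it does not lean on the correctness of a quoted length formula), while yours is shorter once the correct form of (\ref{iwm:len}) is in hand. Beware the trap you flagged yourself at the end: passing to $w^{-1}$, tracking the $\iota$-coefficient, and the sign of $\la\alpha_i,b\ra$ are all fine in your write-up; the error is solely the sign on the $\chi_{\Delta_-}$ term.
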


\begin{proof}
 Suppose first that $\sigma_i$ in (\ref{kappa_i}) is positive.  Then  \be{w:app} w^{-1} ( - \alpha_i + n \iota ) \ \  = \ \  - \sigma_i  \ + \  (n - \kappa_i(w^{-1}) )\, \iota \ee
since $\wh{W}$ preserves the imaginary root $\iota$.
   The root $-\alpha_i + n \iota$ is positive if $n >0$, while  $w^{-1} ( - \alpha_i + n \iota)$ is negative if $n - \kappa_i(w^{-1}) \leq 0.$ Hence $\wh{\Delta}_{w}$ from (\ref{Deltawdef}) includes the roots \be{flipped:w} -\alpha_i+ \iota, \,- \alpha_i + 2 \iota, \,\ldots, \,- \alpha_i + \kappa_i(w^{-1}) \iota\,. \ee
   From this and (\ref{lengthandcard}) we conclude that $\kappa_i(w^{-1})    \leq   \ell(w^{-1})  = \ell(w)$.
  A similar analysis for the case of $\sigma_i <0$ shows $\wh{\Delta}_{w}$ contains the string  \be{string:2} -\alpha_i + \iota,\, \ldots,\, - \alpha_i + (\kappa_i(w^{-1})-1)\iota, \ee and so
  $\kappa_i(w^{-1})    \leq    \ell(w^{-1})  +   1 $.
\end{proof}

\vspace{.5cm}
\begin{center} {\bf D. Loop Groups and Some Decompositions}\end{center}

\spoint \label{groups:section} We next introduce some notation related to loop groups following \cite{ga:ihes}, which we briefly review here. Let $\, \mf{g}_{\zee} \subset \mf{g}\,$ and $\,\hf{g}^e_{\zee} \subset \hf{g}^e\,$ be Chevalley $\zee$-forms of the Lie algebras defined above.   Given a dominant integral weight $\lambda \in \hf{h}^*$  extended to  $(\hf{h}^e)^*$  as in (\ref{integralweightlatticedef}-\ref{2.20}), let $V^{\lambda}$ be the corresponding irreducible highest-weight representation of  $\hf{g}^e$. Also,  let $V^{\lambda}_{\zee} \subset V^{\lambda}$ denote a Chevalley $\zee$-form for this representation, chosen compatibly with the Chevalley forms  $\mf{g}_{\zee}$ and $\hf{g}^e_{\zee}$ and with suitable divided powers as in \cite{ga:la}.

For any commutative ring $R$ with unit,   let  $\mf{g}_R$, $\hf{g}^e_R$, and  $V^{\lambda}_R$ be the objects obtained by tensoring the respective objects $\mf{g}_\Z$, $\hf{g}^e_\Z$, and  $V^{\lambda}_\Z$ over $\Z$ with $R$.  For any field $k$, we let $\hG^{\lambda}_k \subset \Aut(V_k^{\lambda})$ be the group defined in \cite[(7.21)]{ga:ihes} which generalizes the classical Chevalley-Steinberg construction of \cite{stein:yale} to the loop setting. When no subscript is present, we shall implicitly assume that $k= \R$, so that for example $\hg^{\lambda}$ is taken to mean $\hG^{\lambda}_{\R},$ etc.

For $a \in \wh{\Delta}_W$ and $u \in k$ we define $\chi_a(u)$ as in \cite[(7.14)]{ga:ihes},   which parameterizes the one-parameter root group corresponding to $a.$ Using $\chi_a(u)$ we may then define the elements
 \begin{equation}\label{wasdef}
    w_a(s) \ \ := \ \ \chi_a(s)\,\chi_{-a}(-s^{-1})\,\chi_a(s)
 \end{equation}
   and
   \begin{equation}\label{hasdef}
    h_a(s) \ \ := \ \ w_a(s)\,w_a(1)^{-1}
   \end{equation}
  for $s \in k^*$.   We shall use the abbreviation $h_i(s)=h_{a_i}(s)$ for $i=1,\ldots,\ell+1$.  We shall fix $\lambda$ throughout and shorten our notation to $\hG:= \hG^{\lambda}_{\R}$ and make a similar convention for $\hG_k.$

 In case $k= \R,$  we let $\{\cdot, \cdot \}$ denote the real, positive-definite inner product on $V^{\lambda}_{\R}$ described in \cite[\S16]{ga:ihes} and the references therein.   We then define subgroups
\begin{equation}\label{disc:cpt}
\aligned
 \hgam & \ \ = \ \ \left\{ \gamma \in \hG  \  |  \   \gamma \,  V_{\zee}^{\lambda} \,  = \,  V_{\zee}^{\lambda} \right\} \\
\text{and} \ \ \ \ \hK & \ \ = \ \ \left\{ k \in \hG \  | \  \{ k \xi, k \eta \} = \{ \xi, \eta \} \; \text{ for } \xi, \eta \in V^{\lambda}_\R \right\}.
\endaligned
\end{equation}
 Fix a coherently ordered basis $\mc{B}$ of $V_{\zee}^{\lambda}$ in the sense of \cite[p.~60]{ga:ihes},  and let $\hh$,  $\hU$, and $\hB$  respectively denote the subgroups of $\hG$ consisting of diagonal, unipotent upper triangular, and upper triangular  matrices with respect to $\mc{B}$.  The subgroup $\wh{U}$ contains
 the one-parameter subgroups $\chi_a(u)$ for each $a\in\wh{\D}_+$.
 The subgroup $\hh$ normalizes $\hu$ and
 \be{Bsemi} \hb \ \ = \ \  \hh  \, \ltimes \, \hu \, \ee
 is their semi-direct product.
   There exists a surjective map  from
   $(\R^*)^{\ell+1}$ onto $\hh$ given by  \be{new2.74} (s_1, \ldots, s_{\ell+1})  \ \ \mapsto  \ \ h_1(s_1) \, \cdots \,h_{\ell+1}(s_{\ell+1})\,. \ee
   We use this parametrization to  define the map $h\mapsto h^{a_i}$ on $\wh{H}$ for any simple root $a_i$, via the formula $h_{j}(s)^{a_i}=s^{\langle a_i,h_j \rangle}$.  The products  on the righthand side of (\ref{new2.74}) with each $s_i>0$ form a subgroup $\wh{A}\cong \R_{>0}^{\ell+1}$ of $\wh{H}$, so that $\wh{A}$ is isomorphic to $\wh{\frak h}$ by the logarithm map
   \begin{equation}\label{lndef}
   \ln \, : \, h_1(s_1)\cdots h_{\ell+1}(s_{\ell+1}) \ \ \mapsto  \ \ \ln(s_1)\,h_1 \ + \ \cdots \ + \ \ln(s_{\ell+1})\,h_{\ell+1}\,.
   \end{equation}
  Both $\wh{H}$ and $\wh{A}$ have Lie algebra $\wh{\frak h}$, and in fact
   \be{Hhat} \hh  \ \ = \ \  \ha \,\times \,(\hh \cap \hk)  \ee
   holds as a direct product decomposition.
       Using (\ref{lndef}) the notation (\ref{atolambdef}) extends to give a character $a\mapsto a^\l$ of $\wh{A}$ for any $\l\in(\wh{\frak h}^e)^*$.
    For any linear combination
 \begin{equation}\label{new2.75}
    X  \ \ = \ \  c_1 \, a_{1} \  + \  \cdots  \ + \  c_{\ell+1}\, a_{\ell+1} \, ,  \ \ \  \ \  c_i \, \in \, \zee \,,
 \end{equation}
   we define the map \be{new2.76} h_X(s)  \ \ := \ \  h_1(s)^{c_1}\, \cdots \, h_{\ell+1}(s)^{c_{\ell+1}} \ \ \ \ \ \text{for} \ \,   s \,\in\,\R^*\,.\ee  Using the relation $\iota=\a_0+a_{\ell+1}$ from (\ref{newroot}), this then allows us to define  $h_{\iota}(s)$  for $s \in \R^*$.

    Define
    \begin{equation}\label{hh}
    \aligned
        \hh_{cen} & \ \ := \ \  \left\{\, h_{\iota}(s) \,|\,  s \,\in\, \R^* \right\} \\
\text{and} \ \ \ \ \ \ \ \hh_{cl}\,  & \ \ :=  \ \ \left\{ \prod_{i=1}^{\ell} h_{i}(s_i) \ |\ s_1,\ldots,s_{\ell}\, \in\, \R^* \right\},
\endaligned
    \end{equation}
and then set $\ha_{cen} = \ha \cap \hh_{cen}$ and $\ha_{cl} = \ha \cap \hh_{cl}$. We then have a direct product decomposition \be{hh:dp} \ha \ \  = \ \  \ha_{cen}  \, \times \,  \ha_{cl}\,. \ee Indeed, it suffices to show that $\ha_{cl} \cap \ha_{cen}=\{1\}.$ As any element $x \in \ha_{cl} \cap \ha_{cen}$ can be written as
\be{x:2} x  \ \ = \ \  h_{\iota}(s) \ \ = \ \  h \,, \ \   \ \text{ with }  \ s \,  \in \,\R_{>0} \ \ \text{and} \ \ h \,  \in \, \ha_{cl}\, . \ee
Recall from above that the group $\wh{G}$ is defined with respect to a dominant integral weight $\l$.
Because of  \cite[Proposition 20.2]{ga:ihes}, there exists a positive integer $m$ together with a homomorphism $\pi(\lambda, m \Lambda_{\ell+1}): \hg^{\lambda} \hookrightarrow \hg^{m \Lambda_{\ell+1}}.$ According to \cite[p.~107]{ga:ihes} this map identifies the elements $h_i(s)$ in the two groups, for any $1\le i \le \ell+1$.
From (\ref{aff:wts}) and (\ref{newcoroot}) we see that the fundamental weight $\Lambda_{\ell+1}$ satisfies $\la \Lambda_{\ell+1}, h_{\iota} \ra =1$ and $\la \Lambda_{\ell+1}, h_{i} \ra = 0$ for $i=1, \ldots, \ell.$
In the  group $\hg^{m \Lambda_{\ell+1}}$, which acts on the highest weight representation $V^{m \Lambda_{\ell+1}}$ with highest weight vector $v_{m \Lambda_{\ell+1}}$,  \cite[Lemma~11.2]{ga:ihes} shows
 \be{Acen:Acl} h_{\iota}(s)\, v_{m \Lambda_{\ell+1}}  \ \ = \ \  s^{ m \la \Lambda_{\ell+1}, h_{\iota} \ra }\, v_{m \Lambda_{\ell+1}}  \ \ = \ \  s^m\, v_{m \Lambda_{\ell+1}}\,. \ee
 However, using (\ref{x:2}) the same lemma shows
$h_{\iota}(s)   v_{m \Lambda_{\ell+1}} =  h  v_{m \Lambda_{\ell+1}}   = v_{m \Lambda_{\ell+1}}$
 because  $h \in \ha_{cl}.$ Thus $s^m=1$ from which we can conclude that $s=1$ itself.

%
%
%
%

\spoint \label{bruhat:section} For any field $k$, we define $\hh_k$ to be the subgroup of $\hg_k$ consisting of diagonal matrices with respect to the  coherent basis $\mc{B}$. Similarly we set $\hb_k$ and $\hu_k$ to be the groups of upper triangular and unipotent upper triangular matrices with respect to $\mc{B}$, respectively. As in (\ref{Bsemi}), $\hb_k$ is the semi-direct product of $\hh_k$ and $\hu_k$.
Moreover, elements of  $\hh_k$ can be written in terms of the map (\ref{new2.74}) (recall that the elements $h_a(s)$ from (\ref{hasdef}) are defined for $s \in k^*$).

The group $\hg_k$ is equipped with a Tits system (see \cite[\S13-14]{ga:ihes}) which identifies the affine Weyl group $\aw$ from section~\ref{sec:What} as the quotient ${\mathbf N}_k/\hh_k =  \aw$, where ${\mathbf N}_k$ is the
group generated by the $w_{a_i}(s)$ from (\ref{wasdef}) with $1\le i \le \ell+1$ and $s\in k^*$.
Thus each  $w \in \wh{W}$ has a representative in ${\mathbf N}_k$, which we continue to denote by $w$.  Moreover, if $w$ is written as a word in the generators $w_1,\ldots,w_{\ell+1}$ from (\ref{sim:ref}), a representative can be chosen to be the corresponding word in the elements $w_{a_1}(1),\ldots,w_{a_{\ell+1}}(1)$ from (\ref{wasdef}).  We shall tacitly identify each $w\in \wh{W}$ with this particular representative.
In the special case $k  = \R$ these representatives lie in $\hk$.   With these conventions, there exists a Bruhat decomposition  \be{bruhat} \hg_k \ \  = \ \ \bigcup_{w \,\in \,\aw} \hb_k \, w \, \hb_k \,, \ee
where in fact each double coset $\hb_k w \hb_k$ is  independent of the chosen representative $w$.
Because of  (\ref{Bsemi}) and the fact that $\aw$ normalizes $\hh_k$,
every element   $g \in \wh{G}_k$  can  be written as
\begin{multline}\label{bruhat:elt} g \ \  = \ \  u_1 \,z \,w \,u_2\,,  \ \ \ \  \text{ where }  \ u_1,\,  u_2 \ \in \  \hu_k\, , \ \ w \ \in \ \wh{W}\,, \ \  \text{ and }   \ z  \ \in\ \hh_k\, .\end{multline}
If $k= \R$, the elements $w\in \wh{W}$ and $z\in \hh$ are uniquely determined by $g$, though $u_1$ and $u_2 \in \wh{U}$ are not in general.

\spoint \label{iwasawa:section}
Next, we recall from \cite[\S16]{ga:ihes} that there exist Iwasawa decompositions  \be{iwasawa} \hG  \ \ = \ \  \hU \, \hA\, \hK  \ \ = \ \  \hK  \, \hA  \, \hU \ee
with respect to the subgroups $\wh{U}$, $\wh{A}$, and $\wh{K}$ defined in section~\ref{groups:section},  with uniqueness of decomposition in either particular fixed order.  Given an element $g \in \hg$ we denote by $\iwa(g) \in \ha$ its projection onto the $\ha$ factor in the first of the  above   decompositions. Note that $\iwa: \hg \rr \ha$ is left $\hu$-invariant  and right $\hk$-invariant by construction.

  For $r \in \R,$ write  $s =e^{r}$ and define the exponentiated degree operator on $V^{\lambda}_\R$ by the formula \be{eta(s)} \eta(s)  \ \ = \ \  \exp(r \dd)\,. \ee
The element $\eta(s)$ acts on the one-parameter subgroups $\chi_{\a+n\iota}(\cdot)$ by
\begin{equation}\label{etaandchi}
\eta(s)\chi_{\a+n\iota}(u)\eta(s)^{-1} \ \ = \ \ \chi_{\a+n\iota}(s^nu)\,.
\end{equation}
  It furthermore acts as a diagonal operator with respect to the coherently ordered basis $\mc{B}$, and consequently it normalizes $\hU$ and commutes with $\hA.$
It then follows  that \be{ida:deg} \eta(s)\,\hG  \ \ = \ \  \{ \eta(s) g \,|\,g \,\in \,\hg \} \ \  = \ \  \hU  \,  \eta(s) \, \hA  \, \hK . \ee We then extend the function $\iwa$ above to a function  $ \iwea  : \eta(s) \hg   \rr   \eta(s) \ha $ by defining
\be{iweadef} \iwea( \eta(s)\, g )  \ \ = \ \  \eta(s)\, \iwa(g) \ \  \in  \ \ \eta(s) \,\ha\, .
 \ee
 This extension is obviously right $\hk$-invariant, and is also left $\hu$-invariant because $\eta(s)$ normalizes  $\hu.$    It furthermore satisfies
 \begin{equation}\label{iweaequivariant}
    \iwea( \eta(s)\, a_1\,u\,a_2\,k ) \ \ = \ \  \iwea( \eta(s)\, a_1)\,a_2 \ \ = \ \  \eta(s)\, a_1 \,a_2
 \end{equation}
 for any $u\in \wh{U}$, $a_1, a_2\in \wh{A}$, and $k\in \wh{K}$.

We extend the logarithm map $\ln:\wh{A}\cong\wh{\frak h}$ defined in (\ref{lndef}) to  $\eta(s)\wh{A}$ by the rule
\begin{equation}\label{logslicerule}
    \ln(\eta(s)\,a) \ \ = \ \ r \,\dd  \ +  \ \ln(a)  \ \ \  \ \text{for} \  \ a\,\in\,\wh{A}
\end{equation}
 (cf. (\ref{eta(s)})).
For the duration of the paper we make the important restriction to consider \emph{only} the case  that $r > 0.$   Note that the convention in \cite{ga:ihes}
is  to  also consider  $r>0$, although $\eta(s)$ from (\ref{eta(s)}) is instead parameterized there as $\eta(s)=\exp(-r\dd)$.  This switch is because we   work here   with the Iwasawa decomposition (\ref{iwasawa}) having $\wh{K}$ on the right, whereas in \emph{op.~cit}   $\wh{K}$ is on the left. 

\vspace{.3cm}
\begin{center} {\bf E. Adelic Loop Groups}\end{center}

\spoint In this section, we review some aspects of  adelic loop groups and their decompositions; further details can be found in  \cite{ga:ragh, ga:ms2}. Let $\mc{V}$ denote the set of finite places of $\Q$, each of which can be identified with a prime number $p$ and the $p$-adic norm  $| \cdot |_p$. For each $p \in \mc{V}$ the field $\Q_p$ is the corresponding completion of $\Q$   and has ring of integers $\Z_p.$  We write  $\mc{V}^e = \mc{V} \cup \{\infty \}$ for the set of all places of $\Q$, where the place $\infty$ corresponds to the archimedean valuation  $| \cdot |_{\infty}$\,, i.e.,
the usual absolute value on $\Q_{\infty} = \R.$  The adeles are defined as the ring
$
    \A = \prod'_{p\in\mc{V}^e} \Q_p,
$
where the prime  indicates the restricted direct product of the factors with respect to the $\Z_p$.  Likewise, the finite adeles $\A_f$ are the restricted direct product of all $\Q_p$, $p\in\mc{V}$, with respect to the $\Z_p$. For each $a= (a_p) \in \A$ the adelic valuation is defined as  $|a|_{\A}    :=  \prod_{p \in \mc{V}^e} | a_p |_p$.
We also write $\I=\A^*$ for the group of ideles and $\I_f=\I\cap \A_f$ for the group of finite ideles.

In section~\ref{groups:section} we introduced the exponentiated group $\hg_k$ for any field $k$, in particular $k=\Q_p$ for any  $p\in\mc{V}^e$.  For shorthand denote $\hg_p \ :=  \ \hg_{\Q_p}$, so that $\hg_{\infty}$ is just the real group $\hg$. For each $p \in \mc{V},$ we set
 \be{Kp} \hk_p  \ \ = \ \  \{ \, g \in \hg_p \,\mid \,  g V^{\lambda} _{\zee_p} \, =\, V_{\zee_p}^{\lambda} \,\} .\ee By convention, we also set $\hk_{\infty}$ to be the group  $\hk$  introduced in (\ref{disc:cpt}). The adelic loop group is then defined as
 \be{Gad} \hg_{\A}  \ \ := \ \  \prod'_{p \,\in \, \mc{V}^e} \hg_p\,, \ee
 where the product is restricted with respect to the family of subgroups $\{ \hk_p \}_{p \in \mc{V}}.$  We also define
 \be{Kad} \hk_{\A} \ \  = \   \prod_{p \, \in \, \mc{V}^e} \hk_p  \ \ \subset \ \  \hg_\A\,. \ee
 Analogously, the groups $\hg_{\A_f}$ and $\hk_{\A_f}$ are defined
 by replacing $\mc{V}^e$ with $\mc{V}$ in (\ref{Gad}) and (\ref{Kad}), respectively.

We set $\hh_p:= \hh_{\Q_p} \subset \hg_{p}$ to be the group defined at the beginning of \S\ref{bruhat:section}, where we remarked that it is generated   by the elements $h_1(s)$, $h_2(s)$,\ldots, $h_{\ell+1}(s)$ for  $s \in \Q_p^*$;  thus for example, $\hh_{\infty}=\hh$. Define
$ \had  =  \prod'_{p  \in   \mc{V}^e} \hh_{p}$,
 where the product is restricted with respect to the family of subgroups $\{ \hh_{p} \cap \hk_p \}_{p \in \mc{V}}.$ Analogously to (\ref{new2.74}), every element $h \in \had$ has an expression
 \be{h:ad}
 h  \ \ = \   \   \prod_{i\,=\,1}^{\ell+1} h_i(s_i)\,, \ \ \  \text{ where each }\, s_i  \, \in \,\I\,.
 \ee
 For such an expression, we define its norm to be the element of $\ha$ given by the product
 \be{h:norm}
 |h|  \ \ = \ \  \prod_{i \,=\,1}^{\ell+1} h_i( |s_i|_\A)\,,
 \ee
 which can be shown to be  uniquely determined by $h \in \had$ independently of its factorization (\ref{h:ad}).  We set 
 $\uad =  \prod'_{p   \in    \mc{V}^e} \hu_{\Q_p}$,
 where the product is restricted with respect to the family $\{ \hu_{\Q_p} \cap \hk_p \}_{p \in \mc{V}}.$ We shall also write 
 $\bad  = \uad \cdot \had$,
which itself is the restricted direct product of all $ \hu_{\Q_p} \cdot  \hh_{p}$ with respect to their intersections with $\wh{K}_p$.

\spoint \label{adelic:iwasawa} We next state the adelic analogues of the Iwasawa decompositions (\ref{iwasawa}) and (\ref{ida:deg}).  First, for each $p \in \mc{V}$ there is the $p$-adic Iwasawa decomposition $\hg_{\Q_p} = \hu_{\Q_p}   \hh_{\Q_p}   \hk_{\Q_p}$,
which is not a direct product decomposition because $\hh_{\Q_p} \cap \hk_{\Q_p}$ is nontrivial.  Together with the $p=\infty$ decomposition (\ref{iwasawa}), these local decompositions give   the adelic Iwasawa decomposition  \be{ad:iwasawa} \gad  \ \ = \ \  \uad  \, \had \, \kad\,. \ee
Although the adelic Iwasawa factorization
\be{ad:iwa:elta} g \ \  =  \ \ u_g \, h_g \, k_g\,, \ \ \ \  \ \ u_g \,\in\, \uad, \, h_g \,\in \,\had, \, k_g \,\in\, \kad\,, \ee is not in general unique, the element $|h_g|$ defined using (\ref{h:norm})  is  uniquely determined by $g$. We shall write the projection onto this element as the map
\be{ad:iwa:1}
\aligned
\aiw_{\ha} \ :    \ \gad  \ &  \rr  \ \ha \, , \\
 g  \ & \mapsto \ |h_g| \,
\endaligned
\ee
onto the Iwasawa $\wh{A}$-factor of $\wh{G}$.    Note that $|h_g|$ is an element of the real group $\wh{G}$; we do not adelize $\wh{A}$.

Recall that we have defined $\eta(s)$ in (\ref{eta(s)}) in the context of real groups, where it has a nontrivial action on $\wh{G}=\wh{G}_{\Q_\infty}$ by conjugation.  After extending this action trivially to each $\wh{G}_{\Q_p}$, $p\in \mc{V}$, there is then a twisted Iwasawa decomposition  \be{twist:iwa:ad} \eta(s)\,\gad \ \  =  \ \ \uad  \, \eta(s) \, \had \,\kad\,. \ee
Moreover, if we write $\eta(s) g \in \eta(s)\gad$ with respect to the above decomposition as \be{ad:iwa:eltb} \eta(s) \, g  \ \ = \ \  u_g \, \eta(s) \, h_g \, k_g\,, \ \  \ \ \ \  u_g \, \in\,  \uad, \, h_g \, \in \, \had, \, k_g \, \in \, \kad\,, \ee then the element $|h_g|$ from (\ref{h:norm})  is again uniquely determined. We then have the projection
\be{ad:iwa:2}
 \aligned
\aiw_{\eta(s) \ha} \ :    \ \eta(s) \,\gad  \ &  \rr  \ \eta(s)\, \ha \, , \\
 g  \ & \mapsto \  \eta(s) \,|h_g| \,,
\endaligned \ee
generalizing (\ref{iweadef}).

\spoint\label{sec:2.15} In \S\ref{groups:section} the group $\hg_\Q$ was defined over the field $k=\Q$. It has embeddings $\hg_{\Q} \hookrightarrow \hg_{\Q_p}$  for each $p \in \mc{V}^e$, and hence a diagonal embedding
 \be{diag} i \ : \  \hg_{\Q}   \ \ \hookrightarrow  \ \ \prod_{p \, \in\, \mc{V}^e} \hg_{\Q_p} \,.\ee
 Note that the righthand side is the direct product, not the restricted direct product:~this is because
  $i(\hg_\Q)$ is actually not contained in $\gad$ (see  \cite[\S2]{ga:ragh}), since  in contrast to the finite-dimensional situation, an element of $\wh{G}_\Q$ can involve different prime denominators in each of infinitely many root spaces.     Define
  \be{gamma:Q} \hgam_{\Q} \ \ := \ \  i^{-1} (\gad)  \ \ \subset  \ \ \hg_\Q\ \ee
   as the subgroup of $\hg_\Q$ which does embed into $\gad$.  We shall generally follow the common convention of identifying $\wh{\G}_\Q$ with its diagonally embedded image $i(\wh{\G}_\Q)$.

\section{Iwasawa Inequalities} \label{section:iwasawa}


\spoint In (\ref{Deltawdef})
 we  associated  to each $w \in \wh{W}$ the finite set of roots
 \be{re:wflip} \wh{\Delta}_{w^{-1}} \ \  = \ \ \{a \in \wh{\Delta}\,|\,a>0, \ w a<0\}\,.\ee
 Recall that the Weyl group fixes all imaginary roots, so $\wh{\Delta}_{w^{-1}}\subset  \wh{\Delta}_W$.
  For each root $a \in \wh{\Delta}_W$  let $U_a$ denote the corresponding   root group consisting of the elements $\{ \chi_a(s) | s \in \R \}$, and fix any simple order (i.e., a partial order in which every two elements are comparable) order on $\wh{\Delta}$. Let  $\wh{U}_{-}$ be the subgroup of $\wh{G}$ which acts by unipotent lower triangular matrices on the coherent basis $\mathcal B$ from section~\ref{groups:section}. We then have the \emph{subgroups}
\be{Uw}
 U_{w} \ \  = \ \  \prod_{\ \  a \,  \in \,\wh{\Delta}_{w^{-1}}} \!\!\! \! U_{a}  \ \ = \ \  \wh{U}\,\cap\,w^{-1}\wh{U}_{-} \, w  \ \ \subset \ \ \wh{U}
  \ee
  and
   \begin{multline}\label{U-w} U_{-, w} \ \ = \ \  w \, U_{w} \, w^{-1} \ \ = \ \ \prod_{  \ \ \ \ \gamma\, \in \,\wh{\Delta}_{-,w^{-1}}}\!\!\!\!\!\! U_{\gamma}   \ \  = \ \  \wh{U}_{-}\,\cap\,w\wh{U}w^{-1} \ \  \subset \ \ \wh{U}_{-}\,, \end{multline}
   where the product is taken with respect to the fixed order on $\wh{\D}$ and one has uniqueness of expression
   (cf.~\cite[Lemma~6.4 and Corollary~6.5]{ga:lg2}).

Similarly, each $U_a$ has a rational subgroup  $U_{a,\Q}=\{ \chi_a(s) | s \in \Q \}$.  Rational subgroups  $U_{w, \Q}\subset U_w$ and  $U_{-, w ,\Q}\subset U_{-,w}$ are defined as products of  $U_{a,\Q}$ and $U_{\g,\Q}$  over the roots $a$ and $\g$ appearing in (\ref{Uw}) and (\ref{U-w}), respectively.  Because $\chi_a(s)$ is also defined for $s\in \Q_p$, we likewise have subgroups $U_{a,\Q_p}$, $U_{w,\Q_p}$, and $U_{-,w,\Q_p}$, and their adelic variants $U_{w,\A}$ and $U_{-, w, \A}$ defined as  restricted direct products.

\spoint Recall the notation for the Iwasawa decomposition and its adelic variant introduced in (\ref{iweadef}), (\ref{ad:iwa:1}), and (\ref{ad:iwa:2}).  For  elements $x,y$ in any group we shall use the shorthand notation $x^y := y x y^{-1}$.

\begin{nlem}\label{lemma:iwasawadecomp}
i) Let $\gamma_\Q \in \hgam_{\Q}$, regarded as diagonally embedded in $\wh{G}_\A$ as in \S\ref{sec:2.15}, and  let $g \in \hg.$ Use (\ref{bruhat:elt}) to write $\gamma_\Q = u_1 z w u_2 $ with $u_1, u_2 \in \hu_{\Q},\, w \in \aw,$ and $z \in \hh_{\Q}.$ Regard the element $\gamma_\Q  \eta(s) g \in \eta(s) \gad$ and define the element $\aiw_{\eta(s) \ha}(\gamma_\Q \eta(s) g)  \in \eta(s) \ha$ as in (\ref{ad:iwa:2}).
Then  \begin{equation}\label{4:2:-1a}
\aiwea(\,\g_\Q \,\eta(s)\, g\,)  \ \ = \ \  \iwea( \eta(s) g)^w \,\cdot\,   \aiwa(w u_w)
\end{equation}  for some element $u_w \in U_{w, \A}$ (depending on $\gamma_\Q$ and $\eta(s) g$).

ii) If $\g\in \wh{\G}\subset \wh{G}$,
  \begin{equation}\label{4:2:-1b}
\iwea(\,\g \,\eta(s)\, g\,)  \ \ = \ \  \iwea( \eta(s) g)^w \,\cdot\,   \aiwa(w u_w)
\end{equation}
 for some element $u_w \in U_{w, \A}$ (again depending on $\gamma$ and $\eta(s) g$).
 \end{nlem}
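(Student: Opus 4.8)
\emph{Proof sketch.}  The plan is to prove part i) by a direct computation: Bruhat-decompose $\gamma_\Q$, Iwasawa-decompose the archimedean factor $\eta(s)g$, and then slide every unipotent factor to the far left and every $\kad$-factor to the far right, where the left-$\uad$- and right-$\kad$-invariance of $\aiwea$ annihilates them; the only surviving correction will be exactly $\aiwa(w u_w)$.  Part ii) will then follow from part i) by a short observation about integral $\g$.

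Concretely, write $\gamma_\Q = u_1 z w u_2$ as in (\ref{bruhat:elt}); we may choose the representatives so that $u_2 \in U_{w,\Q}$, which is a \emph{finite} product of root groups.  At the place $\infty$ write the Iwasawa factorization $\eta(s)g = \nu\,b\,\kappa$ from (\ref{iwasawa}) with $\nu\in\hu$, $\kappa\in\hk$, and $b = \iwea(\eta(s)g) \in \eta(s)\ha$, so that $\gamma_\Q\eta(s)g = u_1 z w v b\kappa$ with $v := u_2\nu$.  Using the product decomposition of $\hu$ into $U_w$ and its complement (cf.\ (\ref{Uw}) and \cite[Lemma~6.4]{ga:lg2}) factor $v = v' v''$ with $v'' \in U_{w,\A}$ and $v'$ in the complementary product of the $U_{a,\A}$ with $a>0$, $wa>0$; since $w$ carries those roots to positive roots, $\tilde v' := w v' w^{-1} \in \uad$.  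Because $\eta(s)$ and $\ha$ preserve every root group (by (\ref{etaandchi}) and $\hb = \hh\ltimes\hu$) one has $v'' b = b\tilde v''$ with $\tilde v'' := b^{-1} v'' b \in U_{w,\A}$, and because by (\ref{affw:actionandcenter}) $w\in\aw$ fixes $h_\iota$ and changes $\dd$ only by an element of $\hf{h}$, conjugation by $w$ carries $\eta(s)\ha$ to itself, so $b^w := w b w^{-1} \in \eta(s)\ha$.  Assembling these moves gives
\[
\gamma_\Q\,\eta(s)\,g \ \ = \ \ u_1\,z\,\tilde v'\,b^w\,w\,\tilde v''\,\kappa\,.
\]
Now apply $\aiwea$: use right-$\kad$-invariance to drop $\kappa$, left-$\uad$-invariance to drop $u_1$ and then $\tilde v'$, the equivariance of $\aiwea$ under left multiplication by $\had$ together with the product formula ($|q|_\A = 1$ for $q\in\Q^*$, whence $|z| = 1$ for $z\in\hh_\Q$, see (\ref{h:norm})) to drop $z$, and finally the $\eta(s)\ha$-equivariance of $\aiwea$ (the adelic analogue of (\ref{iweaequivariant}), deduced from (\ref{twist:iwa:ad})) to pull $b^w$ out; what remains is $\aiwea(\gamma_\Q\eta(s)g) = b^w\cdot\aiwa(w\tilde v'')$.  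Since $b^w = \iwea(\eta(s)g)^w$ and $\tilde v'' \in U_{w,\A}$, taking $u_w := \tilde v''$ yields (\ref{4:2:-1a}).

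The step I expect to require real care is the adelic bookkeeping: a general element of $\hu_\Q$ need not lie in $\uad$ (see \S\ref{sec:2.15}), so one must justify that the Bruhat factors of $\gamma_\Q$ and all the intermediate unipotents genuinely live in the restricted-product subgroups.  This is where the hypothesis $\gamma_\Q \in \hgam_\Q$, i.e.\ $i(\gamma_\Q) \in \gad$, enters: $i(u_2)\in\uad$ because $U_{w,\Q}$ is a finite product of root groups, $i(z)\in\had$ because $z$ involves only finitely many primes, and then $i(u_1) = i(\gamma_\Q)\,i(u_2)^{-1} w^{-1} i(z)^{-1} \in \gad$, which together with $u_1\in\hu_\Q$ forces $i(u_1)\in\uad$ (for almost every $p$ its $p$-component lies in $\hu_{\Q_p}\cap\hk_p$); the factors $v',v'',\tilde v',\tilde v''$ inherit this.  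One must also record the two equivariance properties of $\aiwea$ used above, which follow from (\ref{twist:iwa:ad}) in the same way (\ref{iweaequivariant}) follows from (\ref{iwasawa}).  For part ii), note that $\g\in\hgam$ stabilizes $V^{\lambda}_\Z$ (see (\ref{disc:cpt})), hence $i(\g)_p\in\hk_p$ for \emph{every} finite $p$ by (\ref{Kp}); in particular $\g\in\hgam_\Q$, and the finite places contribute trivially to the adelic Iwasawa $\ha$-component, so $\aiwea(\g\,\eta(s)\,g) = \iwea(\g\,\eta(s)\,g)$.  Applying part i) with $\gamma_\Q = \g$ then gives (\ref{4:2:-1b}).
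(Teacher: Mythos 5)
Your argument is correct and follows essentially the same path as the paper: Bruhat-decompose $\gamma_\Q$, Iwasawa-decompose $\eta(s)g$, push the unipotents to the far left and $\kad$ to the far right, use the $U_w$-versus-complement factorization of $\wh{U}$, and invoke the left-$\uad$/right-$\kad$ invariance of $\aiwea$; your choice to split $v=v'v''$ \emph{before} conjugating past $b$ (rather than conjugating the full unipotent by $(\eta(s)h')^{-1}$ and then splitting, as the paper does) is only a bookkeeping variation and produces the same $u_w$. Your added care about why $i(u_1), i(u_2), i(z)$ and the intermediate unipotents actually land in $\uad$ and $\had$ — a point the paper leaves implicit — is a genuine, though minor, improvement in exposition.
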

  \begin{proof}
 First we note that since $z \in \hh_{\Q}$ we have that $\aiwa(z)=1.$ Using the left $\uad$-invariance, the commutativity of $\wh{A}$ with $\eta(s)$,  and the right $\kad$-invariance of $\aiwea$, we compute
    \begin{equation} \label{gammag:-1} \aligned \aiwea( \, \gamma_\Q  \, \eta(s) \, g \, )   & \ \ = \ \ \aiwea(  \, u_1 \,  z \,  w  \, u_2 \,  \eta(s) \,   g \,   ) \\ & \ \ = \ \ \aiwea(\,  z \,  w  \,  u_2 \,  \eta(s) \,  g \, ) \\ & \ \ = \ \ \aiwea(\,  w  \,  u_2 \,  u'\,\eta(s)\,h' \, )\,,
   \endaligned
     \end{equation}
 where
 \begin{multline}\label{4.2:0}
  \eta(s) \, g \ \ = \ \ u'\, \eta(s)\,h'\,k'\ ,  \ \ \ \text{ with }  \ u' \,\in\, \hu \,, \ h' \,=\, \iwa(g) \, \in\, \hA\,, \ \,  \text{and} \, \ k' \, \in \,  \hK\,,
 \end{multline}
 is the (real, and hence also adelic) Iwasawa decomposition of $\eta(s)g$.  Let $u= u_2 u' \in \uad.$
   Again using these properties plus (\ref{iweaequivariant})  we then have
 \begin{equation}\label{gammag:-2}
    \aligned
    \aiwea( \, \gamma_\Q  \, \eta(s) \, g \, )    \ \ & = \ \
    \aiwea( w\,u\,\eta(s)\,h' \, )\\
     & = \ \   \aiwea(  \,  w \, \eta(s)\,h'\, w^{-1}\,\cdot\,w\, \cdot\,(\eta(s)h')^{-1}\,\cdot\,u\,\cdot\,(\eta(s)h')\,
     ) \\
     & = \ \
   \ \aiwea(\,
     (\eta(s) h')^w\,\cdot\,w\,u''\,
     ) \\
     & = \ \ (\eta(s) h')^w \,  \iwa^\A(\,w\,u''\,
     )
  \,,
    \endaligned
 \end{equation}
where $u''=(\eta(s)h')^{-1}u(\eta(s)h')$ lies in $\uad$ because $\eta(s)$ and $h'\in \widehat{A}$ both normalize $\uad$. Furthermore, $u''$ can be factored as $u''=u'''{u}_w$, where $wu'''w^{-1}\in \uad$ and ${u}_w\in U_{w,\A}$.
Thus $wu''=(wu'''w^{-1})w u_w$ and
formula (\ref{4:2:-1a}) now follows from once more applying the left $\uad$-invariance of $\aiwea$.  This proves part i).

To prove part ii), let $\g_\Q\in \wh{\G}_\Q$ be the diagonal embedding of $\g$ into $\wh{G}_\A$.  Since $\eta(s)g\in \wh{G}$, the archimedean component of $\g_\Q\eta(s)g$ is $\g\eta(s)g$.  At the same time, its nonarchimedean components are $\g_p$.  These lie in $\wh{K}_p$ since they preserve the lattice $V_{\Z_p}^\l$ (as a consequence of the defining fact that $\g\in \wh{G}$ preserves $V_\Z^\l$).  Hence the projection of $\g_\Q\eta(s)g$ onto $\wh{G}_{\A_f}$ lies in $\wh{K}_{\A_f}$, and by definition  (\ref{ad:iwa:1}) the left hand sides of (\ref{4:2:-1a}) and (\ref{4:2:-1b}) agree.
 \end{proof}%

In particular part ii) of the lemma asserts that for $\g\in \wh{\G}$,
\begin{equation}\label{hpartandlogs}
    \ln \(\iwea\,(\,\g \,\eta(s)\, g\,)\) \ \ = \ \  \ln \(\eta(s)^w\)  \ + \  \ln\(\iwa( g)^w\)  \ + \ \ln \(\aiwa(\,w\, u_w\,)\)
\end{equation}
for some element $u_w\in U_{w,\A}$, where the logarithm map $\ln$ was defined in  (\ref{lndef}) and (\ref{logslicerule}).

\spoint We next establish some estimates on the individual terms in (\ref{hpartandlogs}) for later use in section~\ref{section:convergence}.   For any parameter $t>0$ let
 \be{At}
  \ha_t  \ \ := \ \  \{ \,h \in \ha \ |\  h^{a_i} > t \; \text{ for each }  \  i\,=\,1,\, \ldots, \,\ell+1 \,\}\,,
   \ee
   and let  $\hu_{\mc{D}}$ be a fundamental domain for the action of $\hgam \cap \hu$ acting on $\hu$ by left translation.  Siegel sets for $\eta(s)\hat{G}$  were defined in \cite{ga:ihes} as sets of the form
 \be{loopsiegel}
 \hs_t \ \  := \ \  \hu_{\mc{D}}  \,    \eta(s)\,  \ha_t\,  \hk\,,
 \ee
 for some choices of $t>0$ and $\hu_{\mc{D}}$.

 According to (\ref{newcoroot}) and (\ref{extended:cartan}), any element $X \in \hf{h}$ can be decomposed as
  \begin{multline}\label{hhat:decomp}
   X  \ \ = \ \  X_{cl} \  + \   \la \lal, X \ra h_{\iota} \  , \ \ \  \  \ \text{with}  \ \  X_{cl}  \, \in \,  \mf{h} \ \ \text{and}  \  \ \la \lal , X \ra \,\in\,\R\, ,
  \end{multline}
where we recall that $\lal:\wh{\frak h}\rightarrow \R$ from (\ref{aff:wts}) is the ($\ell+1$)-st fundamental weight (it is trivial on all classical coroots).
Note that because $h_\iota$ and $\cc$ are nonzero multiples of each other, properties (\ref{2.28}) and (\ref{omega:ext}) assert that all classical roots $\a\in \Delta$,  fundamental weights $\omega_1,\ldots,\omega_\ell$, and $\rho$ from (\ref{rho}) extend trivially to $\R h_\iota\oplus \R\dd$. Also, the classical coroots may be identified with elements of $\wh{\frak h}^e$ using the remarks after (\ref{omega:ext}).
The classical component $X_{cl}$ can be expanded as a linear combination  \be{Xcl} X_{cl} \ \ = \ \  \sum_{j\,=\,1}^\ell \, \la \omega_j, X \ra \, h_j \ee   of the  coroot basis $\{h_1,\ldots,h_{\ell}\}$ of $\frak h$ using (\ref{omega:ext}).
Because   of property (\ref{rho:h_i}) we   have that \be{norm:H} \langle \rho , X    \rangle \ \  = \ \ \langle \rho , X_{cl}    \rangle \ \  = \ \   \sum_{j\, =\,1}^\ell \, \la \omega_j, X \ra \,. \ee
With the above notation, we can now state the following result, which is one of the   main technical tools in this paper.


\begin{nthm} \label{iwineq} Fix an element $\eta(s)g$ of a Siegel set $\hs_t \subset  \eta(s)\wh{G}$ as in (\ref{loopsiegel}).
Recall from (\ref{eta(s)}) that $\eta(s)=\exp(r \dd)$ for $s=e^r$, $r\in \R_{>0}$.
Then there exist positive constants $E =E(s,t,\D),$ $C_1=C_1(s, t, \Delta),$ $C_2(s, t, \Delta)$ depending only on $s$, $t$, and the underlying classical root system $\Delta$, and another positive constant $C_3=C_3(g, s, t, \Delta)$ depending only on $g$, $s$, $t$, and $\D$ with the following properties:~for any   element  $w \in W^{\theta}$ as in (\ref{kostant})  and  $u_w\in U_{w, \A}$, the vectors  $H_1, H_2,$  and $H_3 \in \wh{\frak h}$ defined by
\begin{equation}\label{iwineqH1H2def}
    H_1 \ \ := \ \ \ln(\,\aiwa(wu_w)\,)\ ,\ \ \ \    \ \ H_2 \ \ := \ \
    \ln(  \eta(s)^w  ) \ - \ r \,\dd\ , \ \ \ \ \text{and} \ \  \ \  H_3 \ \ := \ \  \ln ( \iwa(g)^w)
\end{equation}
satisfy the inequalities
\begin{equation}\label{iwineq1}
    \la \omega_j, H_1 \ra \ \ \ge \ \ 0 \ \ \ \ \text{and} \ \ \ \ \la \omega_j, H_2 \ra  \ \ \ge \ \ 0 \ \ \ \  \ \ \ \ \ \  \text{for} \, \ j\,=\,1,\,\ldots,\,\ell\,,
\end{equation}
\be{iwineq3} | \la \omega_j, H_3 \ra | \ \ \le \ \ E \ \ \ \  \ \ \ \ \ \  \text{for} \, \ j\,=\,1,\,\ldots,\,\ell\,, \ee and
\begin{equation}\label{iwineq2}
\aligned
    \la \Lambda_{\ell+1} , H_1 \ra  \ \ & \geq \ \  - \,C_1 \ ( \ell(w) + 1)  \ \la   \rho  , H_1  \ra\,,\\
     \la \Lambda_{\ell+1}, H_2 \ra \ \ & \geq \ \  - \,C_2\  (\ell(w) +1 ) \  \la \rho , H_2   \ra\,,  \\   \text{and} \ \ \ \ \ \ \ \  \ \ \
     \la \Lambda_{\ell+1}, H_3    \ra  \ \  & \geq \ \ -\,C_3 \ ( \ell(w) +1 ) \,.
\endaligned
\end{equation}

  \end{nthm}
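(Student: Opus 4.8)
\textbf{Proof proposal for Theorem~\ref{iwineq}.}

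The plan is to separate the three vectors $H_1, H_2, H_3$ and prove the corresponding estimates independently, using in each case the decomposition $X = X_{cl} + \la \lal, X\ra h_\iota$ from (\ref{hhat:decomp}), so that the inequalities for $\la \omega_j, \cdot\ra$ control the classical part and the inequalities involving $\la \Lambda_{\ell+1}, \cdot\ra$ control the central coefficient. First I would handle $H_2 = \ln(\eta(s)^w) - r\dd$. Using the explicit formula (\ref{affw:actionandcenter}) for the action of $\wh{W}$ on $\he$ applied to $r\dd$ (writing $w = \t w T_b$ with $\t w\in W$, $b\in Q^\vee$), one finds $\eta(s)^w$ has $\dd$-component $r\dd$, classical component $-r\,\t w(b)$, and central coefficient $-\frac{r(b,b)}{2}$; so $H_2 = -r\,\t w(b) - \frac{r(b,b)}{2}h_\iota$. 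The positivity $\la \omega_j, H_2\ra \ge 0$ should follow from the condition $w\in W^\theta$ (equivalently $w^{-1}\alpha_i > 0$ for $i\le \ell$), which forces $-\t w(b)$ to be dominant-ish; more precisely $\la\omega_j, H_2\ra = -r\la\omega_j, \t w(b)\ra$ and one checks this is $\ge 0$ using (\ref{kappa_i}) and Lemma~\ref{kap:ineq}, or directly from the Kostant-representative characterization. Then $\la\lal, H_2\ra = -\frac{r(b,b)}{2}$, while $\la\rho, H_2\ra = -r\sum_j\la\omega_j,\t w(b)\ra$ is a positive multiple of $\|b\|$-type quantity; since $(b,b)$ is bounded by a constant times $\|b\|^2$ and also by a constant times $\ell(w)\cdot\|b\|$ via Lemma~\ref{l(w):ineq}, and $\la\rho,H_2\ra$ is comparable to $\|b\|$, the bound $\la\lal,H_2\ra \ge -C_2(\ell(w)+1)\la\rho,H_2\ra$ drops out. (Here I use $\la\lal,X\ra = \la\Lambda_{\ell+1},X\ra$ on $\he$ modulo the $\dd$-direction, which is already killed in $H_2$.)

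Next, for $H_3 = \ln(\iwa(g)^w)$: since $\eta(s)g$ lies in the Siegel set $\hs_t = \hu_{\mc D}\,\eta(s)\ha_t\hk$, the element $\iwa(g) = h' \in \ha$ has $\ln(h')$ lying in a fixed region depending only on $g$ (indeed $\ln(h')$ is a single fixed vector in $\wh{\frak h}$ once $g$ is fixed). Write $\ln(h') = Y_{cl} + \la\lal, Y\ra h_\iota$. The classical part of $\ln(h'^w) = \ln(\iwa(g)^w)$ is $\t w(Y_{cl})$ (using (\ref{affw:actionandcenter}) again, noting $r=0$ here since $h'$ has no $\dd$-component), so $\la\omega_j, H_3\ra = \la\omega_j, \t w(Y_{cl})\ra = \la \t w^{-1}\omega_j, Y_{cl}\ra$, which is bounded by $E := \max_{w'\in W, j}|\la w'^{-1}\omega_j, Y_{cl}\ra|$ — a finite maximum over the finite group $W$ — giving (\ref{iwineq3}). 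The central coefficient of $H_3$ is $\la\lal, Y\ra + (Y_{cl}, b)$ from (\ref{affw:actionandcenter}), whose absolute value is at most $|\la\lal,Y\ra| + \|Y_{cl}\|\,\|b\| \le C_3'(1 + \|b\|) \le C_3(\ell(w)+1)$ using Lemma~\ref{l(w):ineq}; this is the last line of (\ref{iwineq2}).

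The main work — and the step I expect to be the genuine obstacle — is $H_1 = \ln(\aiwa(wu_w))$ for $u_w \in U_{w,\A}$. I would use Lemma~\ref{lemma:iwasawadecomp} together with an explicit understanding of the adelic Iwasawa projection of $w u_w$ when $u_w$ ranges over $U_{w,\A} = \prod_{a\in\wh\Delta_{w^{-1}}}U_{a,\A}$. The positivity $\la\omega_j, H_1\ra \ge 0$ and the key inequality $\la\Lambda_{\ell+1}, H_1\ra \ge -C_1(\ell(w)+1)\la\rho, H_1\ra$ should come from analyzing, root-group by root-group, the rank-one $SL_2$-type Iwasawa computation: for a single root $a = \alpha + n\iota \in \wh\Delta_{w^{-1}}$ (so $n = \kappa(w^{-1})\ge 0$ in the notation of (\ref{kappa_i}) when $\alpha$ is simple, and controlled by $\ell(w)+1$ in general by Lemma~\ref{kap:ineq}), the contribution $\ln(\iwa(\text{rank-one piece}))$ is a non-positive multiple of the coroot $h_a = h_\alpha + \frac{2n}{(\alpha|\alpha)}h_\iota$ by (\ref{h_a:h'}). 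Pairing with $\rho$ (which annihilates $h_\iota$) and with $\Lambda_{\ell+1}$ (which picks out the $h_\iota$-coefficient, giving $\frac{2n}{(\alpha|\alpha)}$ times the same non-positive scalar) shows each root-group's contribution to $\la\Lambda_{\ell+1}, H_1\ra$ is bounded below by $-(\max_a \kappa_a)\cdot$(its contribution to $\la\rho, H_1\ra)$, i.e. by $-C(\ell(w)+1)$ times that contribution, using Lemma~\ref{kap:ineq} to bound all the $n$'s by $\ell(w)+1$. Summing over the at most $\ell(w)$ roots in $\wh\Delta_{w^{-1}}$ and tracking that the cross-terms from combining root groups (the $\aiwa$ of a product versus the product of $\aiwa$'s) only improve or are harmlessly bounded — this telescoping/ordering bookkeeping via (\ref{Uw}) and the uniqueness of expression cited from \cite{ga:lg2} is the delicate part — yields (\ref{iwineq1}) and the first line of (\ref{iwineq2}). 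I would need to be careful that $\aiwa$ is a genuine homomorphism-like object only up to the unipotent part, and that working adelically does not introduce extra terms: the point is that at each finite place the $\chi_a(s_p)$ with $s_p \in \Z_p$ already lie in $\wh K_p$, so only finitely many ``bad'' places contribute, and the product formula $|s|_\A = \prod_p |s_p|_p$ converts the adelic Iwasawa projection into a clean real quantity via (\ref{h:norm}). The hardest technical point is thus controlling the Iwasawa projection of a product $w u_w$ of root-group elements uniformly in $w$, reducing it to the single-root estimates and keeping the constants independent of $g$ for $H_1$ and $H_2$.
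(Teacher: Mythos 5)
Your treatment of $H_2$ and $H_3$ essentially tracks the paper: write $w=\t w T_b$, apply the action formula (\ref{affw:actionandcenter}) to $r\dd$ and to $\ln(\iwa(g))$ respectively, extract the classical and $h_\iota$-coefficients, and invoke Lemma~\ref{l(w):ineq} and the $W^\theta$ condition (plus, for the sign of $\la\omega_j,\t w b\ra$, Lemma~\ref{ragh} — the paper shows $d_j=\la a_j,\t w b\ra\le 0$ first and then deduces $q_j=\la\omega_j,\t w b\ra\le 0$ by positivity of the inverse Cartan matrix). Your $H_2$ bound is organized through norm equivalence rather than the paper's identity $(\t w b,\t w b)=\sum_i\frac{2}{(\a_i,\a_i)}d_iq_i$, but both close; and your $H_3$ analysis and use of the Siegel set bound to control $\la a_i,H_{g,cl}\ra$ match the paper.

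The genuine gap is in $H_1$. The paper's proof does not reconstruct $\ln(\aiwa(wu_w))$ root-group by root-group; it cites the Arthur-type structural result \cite[Lemma~6.1]{ga:ms2} (the formula labeled (\ref{arthur})) asserting
\[
\ln(\aiwa(wu_w)) \ \ = \sum_{\gamma\,\in\,\Delta_{-,w^{-1}}} c_\gamma\,h_\gamma \qquad \text{with each } c_\gamma\ge 0\,,
\]
and all subsequent inequalities ((\ref{iwineq1}) for $H_1$, (\ref{m:bound}), the bound via Lemma~\ref{kap:ineq}) are algebraic consequences of this membership in a cone. Your proposal attempts to rederive this fact by a rank-one $SL_2$ computation per root group followed by a ``telescoping'' over the factorization $U_{w,\A}=\prod_{a\in\wh\Delta_{w^{-1}}}U_{a,\A}$, with the crucial step being that ``the cross-terms \ldots only improve or are harmlessly bounded.'' That claim is not a bookkeeping issue — the Iwasawa $\wh{A}$-projection is highly nonlinear, and $\aiwa$ of a product of root-group elements is not governed in any simple way by the $\aiwa$ of the individual factors. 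Establishing the cone membership (even in the finite-dimensional case) is a nontrivial theorem of Langlands/Arthur type, which is precisely what the cited lemma supplies. Without it, your per-root estimates on $\la\rho,h_\gamma\ra$ and the $h_\iota$-coefficient of $h_\gamma$ have nothing to attach to, and the first line of (\ref{iwineq2}) does not follow. (Once you do have (\ref{arthur}), your remaining per-root arithmetic — using Lemma~\ref{kap:ineq} to bound the $\iota$-heights and (\ref{h_a:h'}) to split $h_\gamma$ — is sound, and in fact is close in spirit to the paper's passage from (\ref{wH}) to (\ref{m:bound}).)
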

\noindent
{\em Remark:} In fact, one can choose $C_1=1$ in the simply-laced case (as the proof below will show).  The dependence of $C_3$ on $g$ is relatively mild, and only enters through $\langle \L_{\ell+1},\ln(\iwa(g))\rangle$ in (\ref{new3.43}).  In particular it is locally uniform in the central component of $g$.

\spoint \label{sec:3.4} The proof of the above inequalities will occupy
the  rest of this section.  We shall first draw a corollary which will   be   useful in the sequel. Recall from (\ref{hh:dp}) that the group $\ha$ has a direct product decomposition
\be{new3.18} \ha  \ \ = \ \   \ha_{cen} \ \times \  \wh{A}_{cl}\,, \ee
 where  $\ha_{cen}$ is the connected component of the one-dimensional central torus of $\hg$  and $\wh{A}_{cl}$ is the connected component of the split torus from the finite dimensional group $G$ underlying $\hg$.
  As before we identify $\mathbb{R}_{> 0}$ with $\ha_{cen}$ via the map $s \mapsto h_{\iota}(s)$. There is a  natural projection from $\eta(s) \ha$ onto $\wh{A}$,  and subsequently onto each of the factors $\ha_{cen}$ and $\ha_{cl}$ in the decomposition (\ref{new3.18}).

\begin{ncor} \label{cor:iwineq:2} There exist constants $C, D > 0$, depending only on $s$, $t$, the underlying classical root system $\Delta$, and locally uniformly in the central component of $g$, with the following property:~for any $w\in W^\th$, $\g\in \hgam \cap \wh{B} w \wh{B}$ and   $\eta(s) g \in \hs_t$ we have the estimate  
\be{main:ineq}  a^\rho  \ \ \geq \ \  D \, y^{-(C(\ell(w)+1))^{-1}}, \ee
where
 $a \in \ha_{cl}$ and $y \in \ha_{cen} \cong \R_{>0}$  are the projections of
\begin{equation}\label{coriwineq1}
   \iwea(\,\g\,\eta(s)\,g\,)  \ \ \in  \ \ \eta(s) \ha
\end{equation} onto $\ha_{cl}$ and $\ha_{cen}$, respectively.
 \end{ncor}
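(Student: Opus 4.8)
The plan is to simply unwind the definitions and feed the three inequalities from Theorem~\ref{iwineq} into the relation (\ref{hpartandlogs}). First I would write $\g\in \hgam\cap \wh{B}w\wh{B}$ using the Bruhat decomposition as $\g = u_1 z w u_2$ with $u_1, u_2\in \hu_\Q$ and $z\in \hh_\Q$, noting that the Weyl element appearing is exactly the given $w$; since $\g\in\hgam\subset\wh{G}$, part ii) of Lemma~\ref{lemma:iwasawadecomp} applies and gives $\iwea(\g\,\eta(s)\,g) = \iwea(\eta(s)g)^w\cdot \aiwa(wu_w)$ for some $u_w\in U_{w,\A}$. Actually a small point to check: part ii) is stated for $\g\in\wh{\G}$ with the Weyl element coming from the Bruhat form of $\g$ itself, and one must observe that $\g\in \wh{B}w\wh{B}$ forces that Weyl element to be $w$, and that $w\in W^\theta$ is needed for Theorem~\ref{iwineq} — this is part of the hypothesis, so no issue. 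Then, applying $\ln$ as in (\ref{hpartandlogs}), we get $\ln(\iwea(\g\eta(s)g)) = r\,\dd + H_1 + H_2 + H_3$ in the notation (\ref{iwineqH1H2def}), where I have used $\ln(\eta(s)^w) = r\,\dd + H_2$.

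Next I would extract the two projections. Using the decomposition (\ref{hhat:decomp}) of $\wh{\frak h}$ together with (\ref{norm:H}), the $\ha_{cl}$-projection of $\iwea(\g\eta(s)g)$ has $\rho$-component $\langle\rho, H_1+H_2+H_3\rangle = \sum_j\langle\omega_j,H_1+H_2+H_3\rangle$, so $x = \exp(\langle\rho,H_1\rangle + \langle\rho,H_2\rangle + \langle\rho,H_3\rangle)$; note $\rho$ is trivial on $\R\dd$. Similarly the $\ha_{cen}\cong\R_{>0}$-projection $y$ satisfies $\ln y = \langle\lal, H_1+H_2+H_3\rangle$ (again $\lal$ is trivial on $\R\dd$, and the $r\dd$ term contributes nothing to either projection). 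Now apply the inequalities: (\ref{iwineq1}) gives $\langle\rho,H_1\rangle\ge0$ and $\langle\rho,H_2\rangle\ge0$; (\ref{iwineq3}) gives $\langle\rho,H_3\rangle\ge -\ell E$; and (\ref{iwineq2}) gives $\langle\lal,H_i\rangle \ge -C_i(\ell(w)+1)\langle\rho,H_i\rangle$ for $i=1,2$ together with $\langle\lal,H_3\rangle\ge -C_3(\ell(w)+1)$.

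From here it is a short estimate. Set $C = \max(C_1,C_2)$. Then
\[
\ln y \ \ge\ -C(\ell(w)+1)\big(\langle\rho,H_1\rangle + \langle\rho,H_2\rangle\big)\ -\ C_3(\ell(w)+1)\ \ge\ -C(\ell(w)+1)\,\ln x\ -\ \big(C\ell E + C_3\big)(\ell(w)+1),
\]
where in the last step I used $\langle\rho,H_1\rangle+\langle\rho,H_2\rangle = \ln x - \langle\rho,H_3\rangle \le \ln x + \ell E$ together with nonnegativity to bound $-C(\ell(w)+1)(\langle\rho,H_1\rangle+\langle\rho,H_2\rangle)$ from below. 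Rearranging, $\ln x \ge -\dfrac{1}{C(\ell(w)+1)}\ln y - (\ell E + C_3/C)$, i.e.
\[
x\ \ge\ D\, y^{-(C(\ell(w)+1))^{-1}}
\]
with $D = e^{-(\ell E + C_3/C)} > 0$, which depends only on $s,t,\Delta$ and locally uniformly on $g$ (through $C_3$), as required.

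**Expected main obstacle.** This corollary is genuinely routine once Theorem~\ref{iwineq} is in hand — all the real work is in that theorem. The only mild subtleties in the corollary itself are bookkeeping ones: making sure the same $w$ is the one appearing in all three places (Bruhat form of $\g$, Lemma~\ref{lemma:iwasawadecomp}, and Theorem~\ref{iwineq}), and correctly tracking that $\langle\rho,H_3\rangle$ has only a two-sided bound (\ref{iwineq3}) rather than a sign, so one cannot drop it but must absorb $\ell E$ into the additive constant. One must also be a little careful that $\langle\rho, H_1\rangle + \langle \rho, H_2\rangle \ge 0$ so the multiplicative-in-$\ell(w)$ factor doesn't flip an inequality sign — this is exactly why (\ref{iwineq1}) is phrased with each $\langle\omega_j, H_i\rangle$ nonnegative rather than just the sum.
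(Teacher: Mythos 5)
Your proposal is correct and follows essentially the same route as the paper's proof: it feeds the three families of inequalities from Theorem~\ref{iwineq} through the decomposition (\ref{hpartandlogs}) supplied by Lemma~\ref{lemma:iwasawadecomp}~ii). The only cosmetic difference is that you carry out the estimate additively in $\ln x$ and $\ln y$, whereas the paper factors $x=x_1x_2x_3$, $y=y_1y_2y_3$ with $x_i=e^{\langle\rho,H_i\rangle}$, $y_i=e^{\langle\Lambda_{\ell+1},H_i\rangle}$ and manipulates the multiplicative inequalities; the content and constants are the same.
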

\begin{proof}

%
%
We start by
 using part ii) of Lemma~\ref{lemma:iwasawadecomp} (in particular, its consequence (\ref{hpartandlogs}))   to write the logarithm   of (\ref{coriwineq1}) as $rD+H_1+H_2+H_3$, where $H_1$, $H_2$, and $H_3$ are defined   in (\ref{iwineqH1H2def}).
This allows us to factor
\be{3.21} \begin{array}{lcr}   a^\rho   \ \ = \ \  x_1  \, x_2 \,  x_3  & \text{ and } & y \ \ = \ \  y_1\, y_2\, y_3\,, \end{array} \ee where    $x_i = e^{\la \rho, H_i \ra}$ and $y_i = e^{\la \lal, H_i \ra }$ for $i=1,2, 3$.

 From the inequalities (\ref{iwineq2}), there exists a constant $C>0$  depending only on $s, t, \Delta$, and $g$ (in whose central component it is locally uniform) such that
\begin{equation}\label{new3.22}\
\aligned
 y_1 & \ \  \ge \ \  x_1^{-C(\ell(w)+1)}\,, \\ y_2 & \ \  \ge \ \  x_2^{-C(\ell(w)+1)}\,, \\
  \text{and} \ \ \ \ \ \ \ \ \  y_3 & \ \  \ge  \ \ e^{-C(\ell(w)+1)}\,. \endaligned
\end{equation}
Thus
\begin{equation}\label{3.22andahalf}
    y^{-(C(\ell(w)+1))^{-1}} \ \ = \ \ (y_1\,y_2\,y_3)^{-(C(\ell(w)+1))^{-1}}  \ \ \le \ \ x_1\,x_2\,e \ \ = \ \   a^\rho   \,\smallf{e}{x_3}\,.
\end{equation}
Because of (\ref{rho}) and  (\ref{iwineq3}), the ratio $\f{e}{x_3}$ is bounded above and below by  positive constants depending only on $s$, $t$, and $\Delta$, establishing (\ref{main:ineq}).

\end{proof}

%


\spoint In this subsection we prove the first inequalities of (\ref{iwineq1}) and (\ref{iwineq2}).
We begin by recalling from \cite[Lemma~6.1]{ga:ms2} that for any  $w \in W^{\theta}$ and  $u_w  \in U_{w, \A}$,
\begin{equation}\label{arthur} H_1 \ \ = \ \
    \ln(\aiwa(wu_w))
    \ \  =   \sum_{ \ \ \ \ \ \ \ \g\,\in\,\Delta_{-, w^{-1} }} c_\g \,h_{\gamma}   \ \ \ \ \
    \text{with} \ \ c_\g \,\ge\,0\,,\end{equation}
    where $\Delta_{-, w^{-1} }=\{\g\in \wh{\D}_{-}|w^{-1}\g>0\}$ was introduced in (\ref{neg:flipped}).
     (Note that the equivalent statement in \cite[Lemma~6.1]{ga:ms2}  uses the $\wh{G}=\wh{K}\wh{A}\wh{N}$ Iwasawa decomposition as opposed to the $\wh{G}=\wh{N}\wh{A}\wh{K}$ Iwasawa decomposition used here.)
    Recall from the definition (\ref{kostant}) of $W^\theta$ that $\g$ cannot be the negative of a simple classical root $\a_1,\ldots,\a_{\ell}$, nor  in their span.  Since $\wh{W}$ acts trivially $\wh{\D}_I$, $\Delta_{-, w^{-1} }$ is a subset of $\wh{\D}_W$ and its elements have the form (\ref{deltawhat}).

\begin{nlem} \label{rtsystem} Assume that $w \in W^{\theta}$ and $ \g \in \Delta_{-, w^{-1} }$.  Then $\g$ has the form $\g = \beta + n \iota$ with $\beta \in \Delta_+$ and $n <0$.\end{nlem}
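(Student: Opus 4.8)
The plan is to analyze the root $\g \in \Delta_{-,w^{-1}}$ using its general form (\ref{deltawhat}) and the combinatorial characterization of $W^\theta$ in (\ref{kostant}). Since $\g \in \Delta_{-,w^{-1}} \subset \wh{\D}_W$ and $\g < 0$, write $\g = \beta + n\iota$ with $\beta \in \Delta$ and $n \in \Z$; the negativity of $\g$ forces either ($\beta \in \Delta_-$ and $n \le 0$) or ($\beta \in \Delta_+$ and $n < 0$) — we must rule out the first alternative, i.e. we must show $\beta$ cannot be a negative finite root. The two things I would combine are: (a) the defining property of $W^\theta$, namely that $w^{-1}\alpha_i > 0$ for $i = 1, \ldots, \ell$, equivalently $w^{-1}(\Delta_+) > 0$ in the sense that no positive classical root is sent to a negative root; and (b) the condition $w^{-1}\g > 0$ built into the definition (\ref{neg:flipped}) of $\Delta_{-,w^{-1}}$.

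First I would dispose of the extreme case $n = 0$: then $\g = \beta$ is a negative classical root, so $-\g = -\beta \in \Delta_+$, and since $w \in W^\theta$ we have $w^{-1}(-\beta) > 0$ (this is exactly (\ref{kostant}), extended from simple roots to all of $\Delta_+$ by the standard fact that $w^{-1}\alpha_i > 0$ for all simple $\alpha_i$ implies $w^{-1}\alpha > 0$ for all $\alpha \in \Delta_+$ — a claim I would either cite or prove in one line by writing $\beta$ as a sum of simple roots and inducting on height, using that $w^{-1}$ is in the finite Weyl group component only if... actually $w^{-1}$ is a general affine Weyl element, so I should be careful here). Then $w^{-1}\g = -w^{-1}(-\beta) < 0$, contradicting $\g \in \Delta_{-,w^{-1}}$. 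Next, suppose $\beta \in \Delta_-$ and $n < 0$. Then I want to produce a contradiction from $w^{-1}\g > 0$ again. Here the key observation is that $w^{-1}$, while a general affine Weyl group element, still has the property from (\ref{kappa_i})–(\ref{kostant}) that $w^{-1}\alpha_i = \sigma_i + \kappa_i \iota$ with $\sigma_i \in \Delta$ and $\kappa_i \ge 0$; more generally $w^{-1}\beta' = \sigma + \kappa\iota$ with $\kappa \ge 0$ whenever $\beta' \in \Delta_+$ (by additivity, since writing $\beta' = \sum m_i \alpha_i$ with $m_i \ge 0$ gives $w^{-1}\beta' = \sum m_i(\sigma_i + \kappa_i\iota)$, and the $\iota$-coefficient is $\sum m_i \kappa_i \ge 0$). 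Applying this to $-\beta \in \Delta_+$: $w^{-1}(-\beta) = \sigma + \kappa\iota$ with $\kappa \ge 0$, so $w^{-1}\beta = -\sigma - \kappa\iota$, and hence $w^{-1}\g = w^{-1}\beta + n\iota = -\sigma + (-\kappa + n)\iota$ with $-\kappa + n < 0$ since $n < 0$ and $\kappa \ge 0$. A root of the form $-\sigma + m\iota$ with $\sigma \in \Delta$ and $m < 0$ is negative (by (\ref{deltawhat}): it equals $-\sigma + m\iota$, and whether $-\sigma$ is $\pm$ a positive root, the root $(-\sigma) + m\iota$ with $m \le -1$ is negative — precisely, any $\alpha + k\iota$ with $k < 0$ is negative, and any $\alpha + k\iota$ with $k > 0$ is positive; the subtle case $k=0$ doesn't arise here). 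This contradicts $w^{-1}\g > 0$, completing the argument.

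The main obstacle I anticipate is being careful about the sign conventions in the parametrization $\g = \beta + n\iota$ — specifically nailing down exactly when $\beta + n\iota \in \wh{\D}_W$ is positive versus negative as a function of $n$ and the sign of $\beta$, and confirming that the borderline $n = 0$ cases are handled correctly and consistently with the rest of the paper. The cleanest way to organize this is probably to first record the elementary fact that for $\alpha \in \Delta$ and $n \in \Z$, the real root $\alpha + n\iota$ is positive iff $n > 0$, or $n = 0$ and $\alpha \in \Delta_+$ (this follows from $\iota = \alpha_0 + a_{\ell+1}$ in (\ref{newroot}) and the description of $\wh{\D}_+$ as non-negative integral combinations of simple affine roots), and then everything else falls out of the bookkeeping above. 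A secondary point to get right is the claim that $w \in W^\theta$ implies the $\iota$-coefficient of $w^{-1}\alpha$ is non-negative for every $\alpha \in \Delta_+$, not just simple $\alpha$; the additivity argument sketched above handles this, but I should state it as a small lemma or inline it carefully rather than gloss over it.
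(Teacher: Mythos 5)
Your proof is correct and uses essentially the same idea as the paper: write $\beta$ in terms of the classical simple roots, apply $w^{-1}$, and use that $w^{-1}\alpha_i>0$ for $i\le\ell$ (i.e.\ the defining property of $W^\theta$) to contradict $w^{-1}\gamma>0$ if $\beta$ were nonpositive. The paper phrases it slightly more compactly (it disposes of $n=0$ beforehand and then notes directly that $w^{-1}\gamma=\sum d_i\,w^{-1}\alpha_i+n\iota$ would be a nonpositive combination of positive roots), but your case split and sign bookkeeping reach the same conclusion by the same mechanism.
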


\begin{proof}  Since $\g<0$ we must have that $n\le 0$, and in fact $n<0$ since it is not in the span of    $\a_1,\ldots,\a_{\ell}$.  Let us write $\beta$ as an integral linear combination $ \beta = \sum_{i=1}^\ell d_i \alpha_i$ of the positive simple roots.    If  $d_1,\ldots,d_\ell \leq 0$ then \be{iw1:lemma} w^{-1} \g \ \  = \ \  \sum_{i=1}^\ell \, d_i \  w^{-1} \alpha_i  \ + \  n \iota \ee
exhibits the positive root $w^{-1}\g$ as a nonpositive integral combination of positive roots, a contradiction.  Thus at least one (and hence all)  $d_i > 0$ and $\b$ is a positive root. \end{proof}

If we write  $\gamma = \beta + n \iota\in \Delta_{-, w^{-1}}$ with $\beta \in \Delta_+$ and $n<0$ as in the lemma, then   (\ref{h_a:h'}) asserts the existence of a  positive constant $m$ such that $h_{\gamma} =  h_{\beta} + m n h_{\iota}.$  It follows from (\ref{arthur}), (\ref{new2.1}), (\ref{omega:ext}), (\ref{2.39}-\ref{2.40}), and the lemma that  $\la \omega_j ,H_1 \ra \geq 0$ for $j=1, \ldots, \ell$, which proves the first inequality of (\ref{iwineq1}).  Expand $H_1=\sum_{j=1}^\ell \la \omega_j, H_1 \ra h_j+\la \Lambda_{\ell+1}, H_1 \ra h_\iota$ as in (\ref{hhat:decomp}-\ref{Xcl}), so that
\be{wH} w^{-1}H_1 \ \  = \ \  \sum_{j\,=\,1}^\ell \la \omega_j , H_1 \ra \,h_{w^{-1} a_j}  \ + \  \la \lal, H_1\ra \,h_{\iota } \ee
by (\ref{gencorootdef}).
 For each $j=1, \ldots, \ell$ we write the positive root $w^{-1} a_j$ as $ \beta(j) + \kappa_j(w^{-1}) \iota$, with $\beta(j) \in \Delta$ and $\kappa_j(w^{-1})  \geq 0$. Hence, again from (\ref{h_a:h'}) there exists a positive constant $m_{w^{-1}a_j}$ (which equals $1$ in the simply-laced case, and which can only take on a finite number of values in general), such that  \be{hw} h_{w^{-1} a_j} \ \ = \ \   h_{\beta(j)} \  +  \ m_{w^{-1}a_j} \, \kappa_j(w^{-1}) \, h_{\iota}\,. \ee
 The coefficient of $h_{\iota}$ in (\ref{wH}) is  then given by the sum
  \be{3.23.5} \la \Lambda_{\ell+1}, H_1 \ra \  + \  \sum_{j\,=\,1}^\ell m_{w^{-1}a_j} \, \la \omega_j, H_1 \ra \, \kappa_j(w^{-1}) \,. \ee On the other hand, from (\ref{weyloncoroot}), (\ref{arthur}), and the definition of $\gamma \in \Delta_{-, w^{-1}}$ it follows that $w^{-1}H_1$ is a  nonnegative integral combination  of positive coroots and so (\ref{3.23.5}) is nonnegative, in particular  \be{m:bound}  - \, \sum_{j\,=\,1}^{\ell} m_{w^{-1}a_j} \, \la \omega_j , H_1 \ra \, \kappa_j(w^{-1})
   \ \ \le \ \ \la \Lambda_{\ell+1} , H_1 \ra  \ \  \leq   \ \ 0
  \, . \ee
  The second inequality here comes from (\ref{arthur}), the lemma, and (\ref{h_a:h'}).
 Lemma~\ref{kap:ineq} asserts that $\kappa_j(w^{-1}) \le   \ell(w)+1.$  Since the values $m_{w^{-1}a_j}$ are bounded by an absolute constant (and again all equal to $1$ in the simply-laced case), the first inequality of (\ref{iwineq2}) now follows from (\ref{norm:H}) and the first inequality of (\ref{iwineq1}).

\spoint Next, let us turn to the second of the inequalities listed in (\ref{iwineq1}) and (\ref{iwineq2}). Recall that we have defined $H_2$ in (\ref{iwineqH1H2def})  through the   formula
\be{m2:conj} w \,(r \, \dd)  \ \ = \ \  r\, \dd  \ + \  H_2\,. \ee
Now, let us write $w = \t{w} T_b$  with $\t{w} \in W$ and $b \in Q^{\vee}.$
Decomposing $H_2=H_{2,cl}+ \la \Lambda_{\ell+1}, H_2 \ra h_{\iota} $ as in (\ref{hhat:decomp}), formula (\ref{affw:actionandcenter}) implies
\be{iw2:1} \begin{array}{lcr}  \la \Lambda_{\ell+1}, H_2 \ra  \ \ = \ \  -r\,\f{(b, b)}{2}  & \text{ and } &  H_{ 2,cl} \ = \  -r  \  \t{w}\,b\,. \end{array} \ee
We claim that
 \be{pos:rts}
   d_j \ \ := \ \  \la a_j , \t{w}\,b\ra \ \  \leq \ \  0 \ \ \ \  \text{ for } \ j\,=\,1,\, \ldots,\, \ell\,
.   \ee Indeed, from   (\ref{Tbdef})  we have that \be{iw2:2}
w^{-1} a_j  \ \ = \ \  T_{-b} \t{w}^{-1}a_j \ \ = \ \  \t{w}^{-1}a_j  \, - \,   \la \t{w}^{-1}a_j, b \ra \, \iota
\,. \ee By definition (\ref{kostant}),   $w^{-1}a_j >0$ for $w\in W^\theta$ and $j=1,\ldots,n$,  hence
 \be{new3.35} - \la \t{w}^{-1}a_j, b \ra  \ \ \geq  \ \ 0\,, \ee
i.e.,
 $d_j = \langle a_j,\t{w} b\ra =  \la \t{w}^{-1}a_j, b \ra \leq 0$ because of (\ref{affw:hact}).

  By Lemma~\ref{ragh} the inequality (\ref{pos:rts}) implies that    \be{pos:cowts}
   q_j \ \ := \ \  \la \omega_j , \t{w}\,b  \ra \ \  \leq \ \  0 \ \ \ \  \text{ for } \ j\,=\,1, \ldots, \ell\,
.   \ee   Hence,  \begin{multline}\label{pos:cowts:2}
   \la \omega_j , H_2 \ra \ \ = \ \  \la \omega_j , H_{2,cl}  \ra \ \ = \ \  - r \,\la \omega_j , \t{w}\,b  \ra \ \  \geq \ \  0 \ \ \ \  \text{ for } \ j\,=\,1, \ldots, \ell\,
.   \end{multline} This proves the second inequality in (\ref{iwineq1}).

Let us now turn to the second inequality of (\ref{iwineq2}).  From (\ref{pos:rts}) and (\ref{pos:cowts}) we have the simultaneous expressions
\begin{equation}\label{wb:f}
\aligned  \t{w}\, b  \ \ & \ \ = \ \  \ \  \sum_{j=1}^\ell\, q_j\, h_j \ \ \ \ \text{ with } q_j  \ \ \leq  \ \ 0\\ \text{and} \ \ \ \ \ \ \ \ \ \
\t{w}\, b  \ \ & \ \ = \ \  \ \  \sum_{j=1}^\ell\, d_j\, \omega^{\vee}_j \ \ \ \ \text{ with } d_j  \ \ \leq  \ \ 0\,,  \endaligned
\end{equation}
where $\omega_j$ and $\omega_j^{\vee}$ are  defined in (\ref{new2.1})-(\ref{new2.2}). Using (\ref{cowt:corts}) we find that  \be{m2:1} (\t{w}b, \t{w}b) \ \  = \ \  \sum_{i\,=\,1}^\ell \smallf{2}{(\alpha_i, \alpha_i)}\, d_i \, q_i. \ee
Since the denominators take on only a finite number of positive values, there exists a  constant $B> 0$ such that
\be{m2:1.1} (\t{w}b, \t{w}b)
  \ \ \leq  \ \ B \, \sum_{i\,=\,1}^\ell d_i \, q_i
\ \ \leq - B \,\cdot\, \max_i |d_i| \,\cdot\,   \sum_{j\,=\,1}^\ell q_j\,.  \ee
Applying  Lemma~\ref{l(w):ineq} results in the estimate  $\max_i |d_i| \leq  C' (\ell(w) +1 )$ for some constant $C' >0 $ independent of $w$. Thus there exists some constant $C'' >0$, again independent of $w$, such that
\begin{equation}\label{m2:2}
\aligned
-\,\la \Lambda_{\ell+1}, H_2 \ra & \ \ = \ \  r\,\frac{(b, b)}{2}   \ \ = \ \  r\,\frac{(\t{w}b, \t{w}b)}{2} \ \ \leq \ \  - \,C'' \; (\ell(w) +1 ) \; \sum_{j\,=\,1}^\ell r \,q_j \,.
\endaligned \end{equation}
On the other hand, we have from (\ref{rho:h_i}), (\ref{iw2:1}), and (\ref{wb:f}) that
\begin{equation} \label{m2:3}
\la \rho, H_{2, cl} \ra  \ \ = \ \ -\, \la \rho,  r \, \t{w} \,  b \ra \ \  = \ \ - \,r\,\la \rho,  \sum_{j\,=\,1}^{\ell} q_j\, h_j \ra  \ \  = \ \ - \sum_{j\,=\,1}^{\ell} r \,  q_j\,.
\end{equation}
The second inequality of (\ref{iwineq2}) follows from (\ref{m2:2}) and (\ref{m2:3}).


\spoint Finally, we turn to (\ref{iwineq3}) and the last inequality (\ref{iwineq2}) in Theorem~\ref{iwineq}.  Let $H_g:=\ln ( \iwa(g))$, and write
\be{Hg}
 H_g  \ \ = \ \  H_{g, cl}  \  + \  \la \Lambda_{\ell+1}, H_g \ra\, h_{\iota}\,,\ee
where $H_{g, cl} \in
 \mf{h}$ as in (\ref{hhat:decomp}).
If we write $w = \t{w} T_b$ with $\t{w} \in W$ and $b \in Q^{\vee},$ we have
\begin{equation}\label{new3.42}
 \aligned H_3\ \ := \ \  w H_g   & \ \ = \ \  \t{w}\, T_b \, H_{g, cl}  \  + \  \la \Lambda_{\ell+1}, H_g \ra \, h_{\iota}  \\ & \ \ = \ \   \t{w} \,H_{g,cl}  \ + \ \( \la \Lambda_{\ell+1}, H_g \ra + (H_{g,cl} , b ) \) h_{\iota} \,,\endaligned
\end{equation}
where we have used (\ref{affw:actionandcenter}) and the fact that   $w$ fixes $h_\iota$.
This implies \be{H3:cl} (H_3)_{cl} \ \ = \ \  \t{w}\, H_{g, cl}\, , \ee in particular.

Recall that we have assumed that $\eta(s) g \in \hf{S}_t$ with $ t > 0.$  Because of (\ref{At}) we must have that \begin{multline}\label{new3.44} \iwa(   \eta(s) g )^{a_i} \ \  = \ \  e^{ \la a_i, r \,\dd+ H_g \ra }  \ \ > \ \  t  \ \ \ \ \  \text{ for } \ \  i\,=\,1,\, 2,\, \ldots,\, \ell+1\,, \end{multline}
i.e.,  $\la a_i, r \,\dd +H_g\ra > \ln(t)$ for $i=1, \ldots, \ell+1$.
Since (\ref{2.28})-(\ref{a:alpha}) imply that $ \la a_i, h \ra=0$ for $h \in \R h_\iota\oplus \R \dd$ and  $i=1, \ldots, \ell$,
 we furthermore have that \be{Hgcl:abound} \la a_i, r \,\dd +H_g \ra  \ \ = \ \  \la a_i, H_{g, cl}  \ra  \ \ > \ \   \ln(t)  \ \ \ \ \ \text{ for } i=\,1,\, \ldots, \,\ell\,. \ee
 On the other hand, from (\ref{iotaX}) and (\ref{newroot}) we also have that \begin{multline}\label{Hgcl:abound:2}  \la a_{\ell+1},r \,\dd +H_g \ra \ \ = \ \ \la - \alpha_0 + \iota, r \,\dd +H_g  \ra  \ \ = \ \ - \, \la \alpha_0, H_{g, cl} \ra \,+\, r \ \  > \ \  \ln(t)\,  , \end{multline} where we recall that $\alpha_0$ is the highest root for the finite-dimensional Lie algebra $\mf{g},$ and as such is a positive linear combination of the roots $a_i,$ $i=1, \ldots, \ell.$
 Thus, from (\ref{Hgcl:abound}) and (\ref{Hgcl:abound:2}) we conclude that as $\eta(s) g $ varies over the Siegel set $\hf{S}_t$, the quantity $| \la a_i, H_{g, cl} \ra |$ is bounded for $i=1, \ldots, \ell$ by a constant  which depends only on $t, r,$ and the root system $\Delta.$ Since $\t{w}$ varies over a finite set, inequality (\ref{iwineq3}) now follows from (\ref{H3:cl}) and Lemma~\ref{ragh}.

Formula (\ref{new3.42}) also implies that  \be{new3.43} \la \Lambda_{\ell+1} , H_3  \ra  \ \ = \ \  \la \Lambda_{\ell+1}, H_g \ra \  + \  ( H_{g, cl} , b )\,. \ee Using (\ref{iwineq3}), Lemma~\ref{l(w):ineq}, and the Cauchy-Schwartz inequality, we see that there exists a constant $C= C(s,t, \Delta)$ depending on such that $ | (H_{3,cl}, b) | \leq C (\ell(w) + 1).$  Since $\la \Lambda_{\ell+1}, H_g \ra$ depends locally uniformly on
       $H_g$,   the last inequality of (\ref{iwineq2}) follows.

\section{Entire absolute convergence on loop groups} \label{section:convergence}

In this section, we combine our analysis from section~\ref{section:iwasawa} with a decay estimate on cusp forms  to conclude that cuspidal loop Eisenstein series are entire. We begin with some discussion of the  structure of parabolic subgroups of loop groups.


\spoint \label{Mtheta} Let $k$ be any field  and consider the group $\hg_k$ constructed in section~\ref{groups:section}. For any subset $\theta \subset \{ 1, \ldots, \ell+1 \}$  let  $\aw_\th$ denote the subgroup of $\aw$ generated by the reflections $\{w_i|i \in \th\}$, and define the parabolic subgroup $\hP_{\theta, k} \subset \hg_k$ as \be{parabdef} \hP_{\theta,k}  \ \ := \ \  \hB_k \; \aw_\th \; \hB_k\,,
\ee
where each $w\in \wh{W}_{\theta}$ is identified with a representative  in $N(\wh{H}_k)$ as in section \ref{bruhat:section}.  Thus $\wh{P}_{\theta,k}=\wh{G}_k$ when $\theta=\{1,\ldots,\ell+1\}$  (cf.~(\ref{bruhat})). Denote by $\hU_{\th,k}$ the pro-unipotent radical of $\hP_{\th,k}.$ If $\theta \subsetneq \{ 1, \ldots, \ell+1 \}$ the group $\aw_\th$ is finite, and the group $\hu_{\th, k}$ can be written as the intersection of $\wh{U}_k$ with a conjugate by the longest element in $\wh{W}_\theta.$ Recall  the subgroup $\hh_k\subset \wh{G}_k$ defined at the beginning of  section \ref{bruhat:section} as the diagonal operators with respect to the coherent basis ${\mathcal B}$. We now set
\begin{equation}\label{a:theta}
\aligned
\hh_{\theta,k}  \ \   &  =   \ \  \la h \in \hh_k \,|\, h^{a_i}\,=\, 1 \ \   \text{for all} \ i \, \in \,\theta \ra \\ \text{and} \ \ \ \ \ \  H( \theta )_k  \ \ & =  \ \   \la h_i(s) \,|\, \text{ for } i \in \theta \ \text{and}  \ s \in k^* \ra\,.
\endaligned
 \end{equation}
 Analogously to (\ref{hh:dp}), there is an almost direct product decomposition $\hh_k = \hh_{\theta,k} \times H(\th)_k$. Let \be{L:theta} L_{\theta,k}  \ \ = \ \  \la  \chi_\beta(s) \mid \beta \in [\theta], \; s \in k \ra\,, \ee where $[\theta]$ denotes the set of all roots in $\wh{\Delta}$ which can be expressed as linear combinations of elements of $\theta.$ We  now define
 \be{M=AL} M_{\th,k} \ \ = \ \ L_{\th,k} \,\hh_{\th,k} \, , \, \ee
 its semi-simple quotient \be{new4.5} L'_{\th,k}  \ \ = \ \  M_{\th,k} / Z(M_{\th,k})  \ \ = \ \  L_{\th,k} / ( Z(M_{\th,k}) \cap L_{\th,k})\,, \ee and the natural projection \be{new4.6} \pi_{L'_{\th,k}}: M_{\th,k} \ \rr \  L'_{\th,k} \,.\ee With the notation above, $\hP_{\th,k}$ decomposes into the semidirect product  \be{P=MU} \hP_{\th,k} \ \ = \ \  \hu_{\th,k} \rtimes M_{\theta,k}  \ee
 (see \cite[Theorem 6.1]{ga:lg2}).
%
%

\spoint We now suppose that   $k= \R$ and continue  our  convention of dropping the subscript $k$ when referring to   real groups. Recall   the subgroup $\ha$ of $\hh$  defined in section~\ref{groups:section}. Setting    $\ha_{\th} = \hh_{\th} \cap \ha$ and $A(\th) = H(\th) \cap \ha$,
it has the direct product decomposition
\be{ha:th}
 \ha \ \  = \ \  \ha_\th  \, \times \, A(\th)
\ee
as a consequence of the fact the Cartan submatrix corresponding to $\theta$ is positive definite \cite[Lemma~4.4]{kac}.

The decomposition (\ref{M=AL})  is not a direct product, but can be refined to one as follows. Define \be{Mth:1} M_\th^1  \ \ = \ \  \bigcap_{\chi \,\in \, X(M_\th) } \ker(\chi^2)\, ,\ee  where $X(M_\th)$ denotes the set of real algebraic characters of $M_\th.$  (Note that the group  $M_\th^1$  was denoted  $\t{L}_\th$ in \cite{ga:zuck}.)   Then the direct product decomposition \be{mth:dp} M_\th  \ \ = \ \  M_\th^1  \, \times  \, \ha_\th\,.  \ee
holds.

 The group $\wh{G}$ has an Iwasawa decomposition with respect to the parabolic $\wh{P}_\theta$, \be{iwasawa:pr} \hg  \ \ = \ \  \hP_{\th} \, \hk   \ \ = \ \   \hu_\th \, M_\th \, \hk\, . \ee In contrast to  (\ref{iwasawa}), this Iwasawa decomposition is typically not unique since $K_\th:= \hk \cap M_\th$ may be nontrivial; in general  only the  map \be{G:Mth1} \iwam: \hg  \ \ \rr \ \  M_\th/ K_\th
\ee
is well-defined.  Noting that $K_\th $ is a compact subgroup of the finite-dimensional group $M_{\th},$ one in fact has that $K_\th \subset M^1_\th$.  This and the direct product (\ref{mth:dp}) allow us to define the map \be{iw:ahat_th}  \iw_{\ha_\th}: \hg \ \  \rr \ \  M_\th/ M^1_\th  \ \ \cong \ \  \ha_\th, \ee
which factors through $\iw_{M_\theta}$.

\spoint \label{Lprime:section} The  finite-dimensional, real semi-simple Lie group $L'_\th$  admits an Iwasawa decomposition
 \be{iw:L} L'_\th \ \  = \ \  U'_\th \, A'(\th) \, K'_\th \ee
of its own, where  $U'_\th \subset L'_\th$ is a unipotent subgroup,    $A'(\th) = \pi_{L'_\th}( A(\th) )$, and \be{K'th} K'_\th \ \  = \ \ \pi_{L'_\th}(K_\th) \ \ = \ \   \pi_{L'_\th}(\hk \cap M_\th) \,. \ee The projection from $L'_{\th}$ onto $A'(\th)$ will be denoted by \be{iw:L'} \iw_{A'(\th)}: L'_{\th} \rr A'(\th). \ee  The factors $\wh{A}_\theta$ and $A(\theta)$ have trivial intersection in  (\ref{ha:th}) and  the factors in (\ref{mth:dp}) commute.  Therefore the intersection  $Z(M_\theta)\cap A(\theta)=Z(M_\theta^1)\cap \wh{A}_\theta\cap A(\theta)$ is also trivial, and hence
   the map $\pi_{L'_\theta}$ induces an isomorphism
   \be{a':a}
     \pi_{L'_\theta}\,:\,A(\th) \ \  \cong \ \  A'(\th) \,.
      \ee
      Define  the map $\iw_{A(\th)}$ as the composition of the Iwasawa $\ha$-projection of $\hg$  defined after  (\ref{iwasawa}) together with the projection onto the second factor in (\ref{ha:th}),  \be{iwA(th)} \iw_{A(\th)}\,: \ \hg \ \  \stackrel{\iw_{\ha}}{\rr} \ \  \ha  \ \ \rr \ \  A(\th)\,.\ee
      Then  the composition
      \be{iw:a'}  \hg  \ \ \stackrel{\iw_{M_{\th}}}{\rr} \ \  M_\th / K_\th  \ \ \stackrel{\pi_{L'_\th}}{\rr}   \ \ L'_{\th} / K'_\th  \ \ \stackrel{\iw_{A'(\th)}}{\rr} \ \  A'(\th)\, \ee
      coincides with $\pi_{L'_\theta}\circ \iw_{A(\th)}$.

\spoint \label{iw:iota}
We shall fix the choice  $\theta=\{ 1, \ldots, \ell \}$ for the remainder of this section, so that  $\aw_{\theta}= W$.
The groups $\wh{A}_\theta$ and $A(\theta)$ are then respectively isomorphic to the groups $\wh{A}_{cen}$ and $\wh{A}_{cl}$ defined just after (\ref{hh}).
We will thus denote  the maps $\iw_{\ha_\th}$  and $\iw_{A(\th)}$ simply by $\iw_{\ha_{cen}}$ and   $\iw_{\ha_{cl}}$, respectively.
 Since $\eta(s)$ normalizes $\wh{U}$
 we may extend $\mcen$ and $\iw_{\ha_{cl}}$ to maps on the slice $\eta(s) \hg$  which we continue to denote by the same names, \be{mcen:eta} \mcen: \eta(s) \hg \ \ \rr \ \ \ha_{cen} \ \ \ \  \text{ and } \ \  \ \ \iw_{\ha_{cl}}: \eta(s) \hg \ \ \rr \ \ \ha_{cl} \, ,\ee
   similarly to  (\ref{iweadef}).  Likewise we may furthermore define the map \be{iwM:eta} \iw_{M_{\th}}: \eta(s) \hg  \ \  \rr  \ \   M_{\th} / K_\theta
   \ \  = \ \  M_{\th} / (\hk \cap M_{\th}) ,\ee
noting from (\ref{etaandchi}) that $\eta(s)$ acts trivially on $M_{\theta}=M_{\{1,\ldots,\ell\}}$.

   Recall that the group $\wh{A}_{cen}$ consists of the elements $h_{\iota}(s)$ for $s>0$.  For a complex number $\nu \in \C$ define $h_{\iota}(s)^\nu=s^\nu$, and  consider the  function
\begin{equation}\label{Phi_nu}\aligned  \eta(s) \hg  & \ \ \rr \ \  \C^* \\  \eta(s) \, g & \ \ \mapsto \ \  \mcen( \eta(s) \, g ) ^{\nu}\,. \endaligned\end{equation}
 Let $\hp$ be the parabolic $\hp_{\th}=\hp_{\{ 1, \ldots, \ell \}}$.
  The following convergence theorem for Eisenstein series has been proven by the first named author.

\begin{nthm}[Theorem 3.2 in \cite{ga:zuck}]  \label{gar:main}  The Eisenstein series \be{deg:es} \sum_{\gamma \; \in \; ( \hgam \,  \cap \,  \hp ) \backslash  \hgam} \mcen(\gamma \eta(s) \, g)^\nu \ee converges absolutely for $\Re{\nu}>2h^\vee$,   where $h^{\vee}=\langle \rho,h_{\alpha_0}\rangle + 1$ is the dual Coxeter number of $G$. Moreover, the convergence is uniform when
   $g$ is  constrained to a subset of the form $\hu_{\mc{D}} \; \ha_{cpt} \; \hk$, where $\ha_{cpt} \subset \ha$ is  compact  and $\hu_{\mc{D}}$ is as in (\ref{loopsiegel}). \end{nthm}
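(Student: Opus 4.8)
The plan, following \cite{ga:zuck}, is to deduce the absolute convergence of the degenerate series (\ref{deg:es}) by term-by-term comparison with the minimal-parabolic loop Eisenstein series $\sum_{\delta\in(\hgam\cap\hb)\backslash\hgam}|\iwea(\delta\,\eta(s)\,g)^{\lambda}|$, whose absolute convergence on the slice $\eta(s)\wh G$ for $\Re\lambda$ in a Godement-type cone was proved in \cite{ga:abs} (see also \cite{ga:ihes}). Recall from (\ref{hh:dp}) that $\ha=\ha_{cen}\times\ha_{cl}$, and that $\mcen$ is by construction the $\ha_{cen}$-component of the full Iwasawa projection $\iwa$. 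Since $\th=\{1,\ldots,\ell\}$ the Levi $M_\th$ is finite-dimensional, and the arithmetic group $\Gamma_M$ defined as the image of $\hgam\cap\hp$ under the projection $\hp=\hu_\th\rtimes M_\th\twoheadrightarrow M_\th$ of (\ref{P=MU}) lies in $M_\th^1$, hence has trivial image in $\ha_{cen}\cong\R_{>0}$; this is why $\mcen(\gamma\,\eta(s)\,g)^{\nu}$ is left $(\hgam\cap\hp)$-invariant and (\ref{deg:es}) is well defined. For the stated local uniformity it suffices to produce a single convergent dominating series valid for all $g$ in a fixed set $\hu_{\mc{D}}\,\ha_{cpt}\,\hk$, and since $r>0$ is fixed such $g$ keep $\iwa(g)$ and the ambient loop Siegel data within bounded ranges.

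Because $\hb\subset\hp$, there is a surjection $(\hgam\cap\hb)\backslash\hgam\twoheadrightarrow(\hgam\cap\hp)\backslash\hgam$ whose fibre over the class of $\gamma$ is $(\hgam\cap\hb)\backslash(\hgam\cap\hp)$, and by (\ref{P=MU}) this is identified with $(\Gamma_M\cap B_M)\backslash\Gamma_M$ for a Borel $B_M$ of $M_\th$. Left-multiplying a representative $\gamma$ by an element of $\Gamma_M\subset\hp$ changes neither its class nor the value $\mcen(\gamma\,\eta(s)\,g)$ (the latter because $\hu_\th$ is normal in $\hp$ and $\Gamma_M$ has trivial $\ha_{cen}$-component), so by reduction theory for the finite-dimensional group $M_\th$ I would choose the representative of each class so that the $M_\th$-Iwasawa component of $\gamma\,\eta(s)\,g$ lies in a fixed Siegel domain $\mf{S}_M$ of $M_\th$. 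Next one fixes a real $\lambda\in(\hf{h}^e)^*$ whose central coordinate $\langle\lambda,h_\iota\rangle$ equals $\Re\nu$ and whose classical coordinates $\langle\lambda,h_i\rangle$ ($i\le\ell$) are large enough that $\Re\lambda$ lies in the Godement cone of \cite{ga:abs}; the restriction of that cone to the central coordinate is precisely $\langle\Re\lambda,h_\iota\rangle>2h^\vee$, which is the hypothesis in the statement (and it forces $\Re\lambda|_{\ha_{cl}}$ into the classical Godement cone of $M_\th$). Summing $|\iwea(\delta\,\eta(s)\,g)^{\lambda}|$ over the fibre above $\gamma$: expressing $\iwea(\delta\,\eta(s)\,g)\in\eta(s)\ha$ through $\ha=\ha_{cen}\times\ha_{cl}$, its $\ha_{cen}$-component is constant along the fibre and equals $\mcen(\gamma\,\eta(s)\,g)$, so the summand is $\mcen(\gamma\,\eta(s)\,g)^{\Re\nu}$ (up to the positive constant $e^{r\langle\lambda,\dd\rangle}$) times a term of the classical minimal-parabolic Eisenstein series of $M_\th$ evaluated at the chosen $M_\th$-component. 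Since that component was placed inside $\mf{S}_M$, the fibre-sum is a convergent such series whose value exceeds a fixed positive constant $c$ — bound its term for the trivial coset from below using Lemma~\ref{ragh} and the defining inequalities of $\mf{S}_M$. Therefore $\mcen(\gamma\,\eta(s)\,g)^{\Re\nu}\le c^{-1}\sum_{\delta\mapsto\gamma}|\iwea(\delta\,\eta(s)\,g)^{\lambda}|$, and summing over the classes $\gamma$ gives $\sum_{\gamma}\mcen(\gamma\,\eta(s)\,g)^{\Re\nu}\le c^{-1}\sum_{\delta\in(\hgam\cap\hb)\backslash\hgam}|\iwea(\delta\,\eta(s)\,g)^{\lambda}|<\infty$, locally uniformly in $g$ by the reduction of the first paragraph.

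The hard part will be the step that simultaneously arranges and exploits a uniform Siegel domain for the $M_\th$-Iwasawa component of $\gamma\,\eta(s)\,g$ as $\gamma$ ranges over the classes: this is what makes the lower bound $c>0$ uniform, and it needs not only reduction theory for the Levi but also control of how the classical Iwasawa coordinates of $\gamma\,\eta(s)\,g$ vary with $\gamma$ — precisely the kind of information supplied by Lemma~\ref{lemma:iwasawadecomp} and the estimate (\ref{iwineq3}). If that uniformity proves awkward, an alternative is to take $\langle\lambda,h_\iota\rangle$ strictly between $2h^\vee$ and $\Re\nu$ and to bound the $\ha_{cl}$-Iwasawa factors from below by a negative power of a single Siegel coordinate, which absorbs the finite-dimensional fibre-sum into a convergent constant with room to spare; in either case the assertion reduces to the already-established absolute convergence of the minimal-parabolic loop Eisenstein series.
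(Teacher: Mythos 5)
The theorem you were asked to prove is quoted verbatim from \cite{ga:zuck} (as Theorem 3.2 there), and the present paper gives no proof of it — it is simply cited as an input to Theorem~\ref{main}. So there is no internal proof to compare against. That said, the introduction explicitly describes the strategy of \cite{ga:zuck} as ``a domination argument from the convergence of the minimal-parabolic loop Eisenstein series,'' and your proposal executes exactly this route: fibring $(\hgam\cap\hb)\backslash\hgam$ over $(\hgam\cap\hp)\backslash\hgam$ with fibre $(\Gamma_M\cap B_M)\backslash\Gamma_M$, noting that $\mcen$ is left $(\hgam\cap\hp)$-invariant because $\Gamma_M\subset M_\th^1$, observing that the $\ha_{cen}$-component of $\iwea(\delta\,\eta(s)\,g)$ is constant along each fibre, and bounding the resulting degenerate Eisenstein series on $M_\th$ from below on a Siegel domain for $M_\th$ via Lemma~\ref{ragh}. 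These are exactly the ingredients the paper signals as coming from \cite{ga:zuck} and \cite{ga:abs}.

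One phrase should be corrected, though it does not affect the logic. You cannot take the classical coordinates $\langle\lambda,h_i\rangle$ ($i\le\ell$) ``large'' while holding $\langle\lambda,h_\iota\rangle=\Re\nu$ fixed and remaining in the Godement cone of \cite{ga:abs}: since $h_\iota = h_{\ell+1}+h_{\a_0}$ (from (\ref{newcoroot})), the constraint $\langle\lambda,h_{\ell+1}\rangle = \langle\lambda,h_\iota\rangle - \langle\lambda,h_{\a_0}\rangle > 2$ caps how large the classical coordinates can be once the central one is fixed. What your argument actually needs is only that, for $\Re\nu>2h^\vee$, one can place the classical coordinates \emph{just above} the threshold so that $\langle\lambda,h_i\rangle>2$ holds for all $i=1,\ldots,\ell+1$; this is possible precisely because $h^\vee=1+\sum_{i\le\ell} c_i^\vee$ where $h_{\a_0}=\sum_{i\le\ell}c_i^\vee h_i$. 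That already puts $\lambda|_{\ha_{cl}}$ in the interior of the classical Godement cone of $M_\th$, which is all the fibre-sum and the lower bound via Lemma~\ref{ragh} require. With that rewording, the proposal is correct and is, as far as the present paper indicates, the same domination argument used in \cite{ga:zuck}.
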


\noindent   Note that  \cite{ga:zuck}  states absolute convergence for the left half plane $\re(\nu)< -2 h^{\vee}$,  since the order of the Iwasawa decomposition there is reversed.

\spoint Our aim is to prove the convergence of the cuspidal analogs of Theorem~\ref{iw:iota}   for all $ \nu \in \C$.  These were defined in \cite{ga:zuck}, where they were shown to converge in a right half plane.
For any right $K_\th$-invariant function $f: M_{\theta} \rr \C$  the assignment  \be{f:K}
\qquad\qquad\qquad
\eta(s) g  \ \ \mapsto  \ \ f( \iw_{M_\th}(\eta(s) g  ) )  \ \ \ \ \ \ \ \ \ \  \qquad\qquad
\text{(cf.~(\ref{iwM:eta}))}
\ee
 is a well-defined function from $\eta(s)\wh{G}$ to $ \C.$
  Recall that the finite-dimensional semisimple group $L'_{\th}$ was defined in (\ref{new4.5}) together with the projection map $\pi_{L'_\th}: M_\th  \rr   L'_\th$.  Set \be{gamma:th} \Gamma_\th \ \ := \ \  \wh{\Gamma} \, \cap \, M_\th\ee
   and $\Gamma'_\th = \pi_{L'_\th} (\Gamma_\th).$ Recall that  $K'_{\th}=\pi_{L'_\th}(K_\theta)$ from (\ref{K'th}) and let $\phi$ be a $K'_\th$-invariant cusp form on $\Gamma'_\th \setminus L'_\th.$ Set
  \be{vphi} \Phi\,= \,  \phi \circ  \pi_{L'_\th}: \Gamma_\th \setminus M_\th  \ \  \rr \ \  \C\,, \ee which is right $K_\th$-invariant. The \emph{cuspidal loop Eisenstein series} from \cite{ga:zuck} is defined as  the  sum \be{cusp:es} E_{\vphi, \nu}( \eta(s) g ) \ \  := \ \   \sum_{\gamma \, \in\, (\hgam \cap \hp) \backslash \hgam }\mcen(\gamma \,\eta(s)\, g)^{\nu}\, \Phi(\iw_{M_\th}(\gamma\, \eta(s)\, g )) \,. \ee We can now state the main result of this paper

\begin{nthm} \label{main} The cuspidal loop Eisenstein series, $ E_{\vphi, \nu}(\eta(s) g )$ converges absolutely for any $\nu \in \C.$ Moreover, the convergence is uniform when
$g$ is  constrained to a subset of the form $\hu_{\mc{D}} \;  \ha_{cpt} \; \hk,$ where $\ha_{cpt} \subset \ha$ is   compact   and $\hu_{\mc{D}}$ is as in (\ref{loopsiegel}).  \end{nthm}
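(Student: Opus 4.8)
The strategy is to estimate $E_{\varphi,\nu}$ term by term in absolute value and reduce the resulting bound to the half-plane convergence of the degenerate series (\ref{deg:es}) furnished by Theorem~\ref{gar:main}. Since $|\mcen(\gamma\,\eta(s)g)^\nu|=\mcen(\gamma\,\eta(s)g)^{\Re(\nu)}$ and a spherical cusp form is bounded, the case $\Re(\nu)>2h^\vee$ follows at once from Theorem~\ref{gar:main}; so fix $\nu$ and set $\sigma:=\Re(\nu)\le 2h^\vee$. Because $\theta=\{1,\dots,\ell\}$ we have $\hp=\hB\,W\,\hB$ by (\ref{parabdef}), so each coset in $(\hgam\cap\hp)\backslash\hgam$ over which (\ref{cusp:es}) is summed meets a single Bruhat double coset $\hB w\hB$ with $w\in W^{\theta}$, and this $w$ depends only on the coset. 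Since the summand in (\ref{cusp:es}) is well defined on $(\hgam\cap\hp)\backslash\hgam$, we may replace each representative $\gamma$ by $\delta\gamma$ with $\delta\in\Gamma_\th\subset\hgam\cap\hp$; invoking reduction theory for the finite-dimensional group $L'_\th$ (the minimum principle of Remark~\ref{remark231}) together with surjectivity of $\pi_{L'_\th}:\Gamma_\th\to\Gamma'_\th$, we choose $\delta$ so that $\pi_{L'_\th}(\iwam(\gamma\,\eta(s)g))$ lies in a fixed Siegel set $\mathfrak{S}'_{t'}\subset L'_\th$.

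For such a representative put $y_\gamma:=\mcen(\gamma\,\eta(s)g)\in\R_{>0}$ and $x_\gamma:=\iw_{\ha_{cl}}(\gamma\,\eta(s)g)^{\rho}\in\R_{>0}$. By the compatibility (\ref{iw:a'}) (recall $\iw_{\ha_{cl}}=\iw_{A(\th)}$ and $\iw_{\ha_{cen}}=\iw_{\ha_\th}=\mcen$), $\pi_{L'_\th}(\iw_{\ha_{cl}}(\gamma\,\eta(s)g))$ is exactly the Iwasawa $A'(\th)$-component of the reduced point $\pi_{L'_\th}(\iwam(\gamma\,\eta(s)g))\in\mathfrak{S}'_{t'}$; in particular $x_\gamma$ is bounded below by a positive constant, and the exponential decay estimate Theorem~\ref{decay:body} applies, giving constants $C,\varepsilon,\beta>0$ depending only on $\phi$, $t'$, $\Delta$, and locally uniformly in $g$, with
\begin{equation*}
  |\Phi(\iwam(\gamma\,\eta(s)g))| \ \ \le \ \ C\,e^{-\varepsilon\,x_\gamma^{\beta}}\,.
\end{equation*}
On the other hand Corollary~\ref{cor:iwineq:2}, applied in the cell of $w$, gives $x_\gamma\ge D\,y_\gamma^{-(C'(\ell(w)+1))^{-1}}$, equivalently $y_\gamma^{-1}\le (x_\gamma/D)^{C'(\ell(w)+1)}$. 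Combining these, for any fixed real $\sigma_1>2h^\vee$ (so $\sigma_1-\sigma>0$) and any $\gamma$ in the cell of $w$ we obtain the pointwise bound
\begin{equation*}
  y_\gamma^{\sigma}\,|\Phi(\iwam(\gamma\,\eta(s)g))| \ \ \le \ \ C\,(x_\gamma/D)^{\,C'(\ell(w)+1)(\sigma_1-\sigma)}\,e^{-\varepsilon x_\gamma^{\beta}}\;y_\gamma^{\sigma_1}\,,
\end{equation*}
in which the super-polynomial factor $e^{-\varepsilon x_\gamma^{\beta}}$ dominates the algebraic factor for each fixed $w$.

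It remains to sum over $\gamma$. Within the cell of $w$ one parametrizes the relevant representatives by the rational points of $U_{w,\A}$ and uses the explicit description of the Iwasawa components from Theorem~\ref{iwineq} (together with Lemma~\ref{l(w):ineq}, which bounds $\ell(w)$ in terms of the translation part of $w$) to see that $\sup_\gamma\,(x_\gamma/D)^{C'(\ell(w)+1)(\sigma_1-\sigma)}e^{-\varepsilon x_\gamma^{\beta}}$ and the number of representatives contributing at a given scale grow at most like a fixed constant raised to a power linear in $\ell(w)+1$; enlarging $\sigma_1$ absorbs this growth, after which the cell-$w$ contribution is dominated by the restriction to that cell of $\sum_{\gamma}\mcen(\gamma\,\eta(s)g)^{\sigma_1}$, and summing over $w\in W^{\theta}$ invokes Theorem~\ref{gar:main}. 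Local uniformity in $g$ of every constant used --- in Theorem~\ref{iwineq} and Corollary~\ref{cor:iwineq:2}, in Theorem~\ref{decay:body}, and in Theorem~\ref{gar:main} --- yields the stated uniform convergence for $g\in\hu_{\mc{D}}\,\ha_{cpt}\,\hk$. The main obstacle is precisely this last step: making all estimates \emph{uniform across the infinitely many Bruhat cells}, where the loss in Corollary~\ref{cor:iwineq:2} degrades linearly in $\ell(w)$. This is the exact point at which the classical \emph{polynomial} (rapid) decay of cusp forms is insufficient and the genuine \emph{exponential} decay supplied by Theorem~\ref{decay:body} is indispensable.
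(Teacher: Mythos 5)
Your high-level structure matches the paper's: reduce the case $\Re\nu\le 2h^\vee$ to the half-plane convergence of Theorem~\ref{gar:main}, decompose over Bruhat cells indexed by $w\in W^\theta$, apply exponential decay of the cusp form (Theorem~\ref{decay:body}) to the Iwasawa $\ha_{cl}$-component, and use Corollary~\ref{cor:iwineq:2} to convert between the central and classical components. Choosing a reduced representative via the minimum principle is a harmless reframing, since the paper's version of Theorem~\ref{decay:body} already holds for arbitrary $g$ by exactly that reduction.

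However, the last step contains a genuine quantitative gap. You claim that
$\sup_{x>0}\,(x/D)^{C'(\ell(w)+1)(\sigma_1-\sigma)}e^{-\varepsilon x^{\beta}}$
grows ``at most like a fixed constant raised to a power linear in $\ell(w)+1$,'' and that this can be absorbed by enlarging $\sigma_1$. Neither assertion is correct. Setting $P=C'(\ell(w)+1)(\sigma_1-\sigma)$ and optimizing in $x$ gives the maximizer $x^{\beta}=P/(\varepsilon\beta)$ and the maximum value $\exp\bigl((P/\beta)(\ln P + O(1))\bigr)\sim e^{c\,\ell(w)\ln\ell(w)}$, which is \emph{super}-exponential in $\ell(w)$, not $K^{\ell(w)}$. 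Moreover, increasing $\sigma_1$ makes $P$ larger and only worsens this factor; it does not absorb it.

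What actually closes the argument in the paper --- and is absent from your proposal --- is the quadratic decay of $\ln y_\gamma$ in $\ell(w)$ coming from the central contribution $H_2$ of Theorem~\ref{iwineq}. Precisely, from (\ref{iw2:1}) one has $\la\Lambda_{\ell+1},H_2\ra=-r(b,b)/2$, where $\|b\|$ is comparable to $\ell(w)$ by Lemma~\ref{l(w):ineq} and $r>0$; together with the bounds on $\la\Lambda_{\ell+1},H_1\ra<0$ and $|\la\Lambda_{\ell+1},H_3\ra|\ll\ell(w)+1$, this gives $y_\gamma\le e^{p(\ell(w))}$ with $p$ quadratic with strictly negative leading coefficient. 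Keeping a small positive exponent $a=d+\Re\nu-\nu_0>0$ on a factor $y_\gamma^a$, this quadratic Gaussian-type decay dominates the $e^{c'\ell(w)\ln\ell(w)}$ growth of $(C_1N)^{C_1N}D^{-N}$, making the whole error factor uniformly bounded over all $\gamma$ and $w$. You correctly identified the loss from Corollary~\ref{cor:iwineq:2} as the crux but missed that Theorem~\ref{iwineq} also supplies the decisive gain in the central direction; without invoking it, the stated bound on the error factor is false and the domination by the degenerate series does not go through.
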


\spoint The proof of Theorem~\ref{main} is based on two main ingredients, namely the inequalities proved in Theorem~\ref{iwineq}  and also the following decay estimate.  To state it, we keep the notation as in $\S \ref{finrtsystems}-  \ref{sd2}.$ Let $\phi:\Gamma \setminus G\rightarrow\C$ be a $K$-finite cusp form. The following result can be deduced from  Theorem~\ref{thm:uniform} in the appendix.  Stronger results are known for $K$-fixed cusp forms due to Bernstein and Kr\"otz-Opdam \cite{Kr:opdam}, and those are in fact sufficient for the applications here since we only consider Eisenstein series induced from $K$-fixed cusp forms.  Their techniques extend to the $K$-finite setting, but have not been published.  As  mentioned in the introduction, we anticipate the $K$-finite statement will be useful for proving the convergence of $K$-finite loop Eisenstein series once a definition has been given, and so we give a complete proof of the following result in the appendix (using a different argument).

\begin{nthm} \label{decay:body} Let $\phi \in \Gamma \setminus G$ be a $K$-finite cusp form. Then there exists a constant $C > 0$ which depends only on $G$ and  $\phi$ such that for every natural number $N \geq 1,$ we have \be{cor:ufrho}  \phi(g )  \ \ \leq \ \  (CN)^{CN} \ \iw_A(g)^{-N \rho}\,. \ee \end{nthm}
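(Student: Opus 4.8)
The plan is to reduce the statement to the classical rapid-decay (and exponential-decay) properties of $K$-finite cusp forms, but to track the dependence of the implied constants on $N$ very carefully, since the point of the inequality is the explicit bound $(CN)^{CN}$ on the constant as $N$ grows. First I would reduce to the case where $g$ lies in a fixed Siegel set $\mf{S}_t = U_{\mc D} A_t K$ with $t < \sqrt{3}/2$: by the Proposition of \S\ref{sd2}, every $g\in G$ is $\G$-translatable into $\mf{S}_t$, and both $\phi$ (by $\G$-invariance) and $\iw_A(g)^{-N\rho}$ (one checks, using the minimum principle of Remark~\ref{remark231} applied to $V^\rho$, that $\iw_A(\g g)^\rho \le c\,\iw_A(g)^\rho$ for $\g g\in\mf{S}_t$) transform compatibly, so it suffices to prove the estimate for $g = u\,a\,k$ with $u\in U_{\mc D}$, $a\in A_t$, $k\in K$. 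Since $\phi$ is right $K$-finite, $\phi(uak)$ is controlled by finitely many functions $\phi_j(ua)$ (the $K$-translates spanning the relevant $K$-type), so I may drop the $K$ and estimate $\phi$ on $U_{\mc D}A_t$.

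The heart is the following: for a $K$-finite cusp form $\phi$ and \emph{fixed} $N$, the constant-term-vanishing along every proper parabolic, combined with the standard argument using the action of the universal enveloping algebra (or the Jacquet-module/Fourier-expansion along unipotent radicals), gives $|\phi(ua)| \ll_N a^{-N\rho}$ uniformly for $u\in U_{\mc D}$, $a\in A_t$; this is the usual proof that cusp forms are rapidly decreasing. What is \emph{not} standard, and is the real content here, is getting the dependence of the constant on $N$ to be at worst $(CN)^{CN}$. For this I would run the argument through the exponential decay result --- Theorem~\ref{mainthm} / Theorem~\ref{thm:uniform} from the appendix --- which gives a bound of the shape $|\phi(g)| \ll e^{-c\,\|\ln \iw_A(g)\|}$ (with $c>0$ depending only on $G,\phi$), i.e. exponential decay in the distance to the walls, valid for $g$ in a Siegel set. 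Writing $a^{-N\rho} = \exp(-N\langle\rho,\ln a\rangle)$ and $\langle \rho, \ln a\rangle \ge c'\|\ln a\|$ on $A_t$ (Lemma~\ref{ragh}: $\rho$ is a strictly positive combination of simple roots, so $\langle\rho,\ln a\rangle$ is bounded below by a positive multiple of $\sum \langle\alpha_i,\ln a\rangle \gtrsim \|\ln a\|$ up to the constant $\ln t$), the desired bound becomes: $e^{-cX} \le (CN)^{CN} e^{-c'NX}$ for $X = \|\ln a\| \ge $ const. Split into two regimes: for $X$ large compared to $N$ (say $c'NX \le \tfrac{c}{2}X$, i.e. $N \le c/(2c')$ --- or more usefully, for $X \ge (2c'/c)N$) the exponential decay $e^{-cX}$ already dominates $e^{-c'NX}$ with room to spare and no $N$-dependent constant is needed; for $X$ in the bounded range $X \le (2c'/c)N$ one uses the crude bound $e^{-c'NX}\le 1$ and needs $e^{-cX}\le (CN)^{CN}$, which is trivially true --- in fact one even has the freedom to instead use polynomial decay $|\phi(ua)|\ll_M a^{-M\rho}$ for a well-chosen $M\asymp N$ and absorb the resulting constant, which is where a bound like $(CN)^{CN}$ (rather than something cleaner) naturally appears if one tracks the constants in the iterated integration-by-parts / enveloping-algebra argument.

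The main obstacle, and the step to treat most carefully, is precisely this control of the $N$-dependence of the constant: the naive rapid-decay argument produces a constant depending on $N$ through the order of a differential operator applied to $\phi$ (roughly $C_N = \|\mathrm{(operator\ of\ degree\ }\asymp N)\,\phi\|$), and one must bound the norm of that operator applied to $\phi$ in terms of $N$. Here one exploits that $\phi$, being $K$-finite and a cusp form, is an eigenfunction (or finite under) the center of the enveloping algebra, so high-degree derivatives are expressible via low-degree ones with coefficients that grow at most factorially in the degree --- this is exactly the mechanism that yields the $(CN)^{CN}$ shape, and the honest reference for this quantitative statement is the appendix's Theorem~\ref{thm:uniform}, so in the body I would simply cite it. The remaining steps --- the Siegel-set reduction, the comparison $\langle\rho,\ln a\rangle \gtrsim \|\ln a\|$ via Lemma~\ref{ragh}, and the two-regime split above --- are routine once Theorem~\ref{thm:uniform} is in hand, and I would present them in that order: (1) reduce to a Siegel set using the covering Proposition and the behavior of $\iw_A$ under $\G$; (2) reduce to $U_{\mc D}A_t$ by $K$-finiteness; (3) invoke Theorem~\ref{thm:uniform} for exponential decay; (4) convert exponential decay in $\|\ln a\|$ to the claimed bound in $\iw_A(g)^{-N\rho}$ via Lemma~\ref{ragh} and the elementary inequality $e^{-cX}\le (CN)^{CN}e^{-c'NX}$.
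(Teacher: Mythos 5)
Your high-level plan is in the end the paper's plan, but the write-up contains two genuine errors and a substantial amount of unnecessary (and incorrect) machinery.

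First, the correct core: the paper's proof of Theorem~\ref{decay:body} consists entirely of (i) reducing to a Siegel set by choosing $\gamma\in\Gamma$ minimizing $\Psi_g$, and (ii) citing Theorem~\ref{thm:uniform}, which \emph{already} gives the bound $\phi(g)\le (CN)^{CN}\iw_A(g)^{-N\rho}$ for $g\in\mf S_t$. Your last paragraph, where you say you would ``simply cite'' Theorem~\ref{thm:uniform}, is exactly right, and that is the whole proof. Everything between the Siegel reduction and that citation should be deleted.

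Second, the Siegel reduction has a sign error. You wrote $\iw_A(\gamma g)^\rho\le c\,\iw_A(g)^\rho$, but what is needed — and what the paper's Remark~\ref{remark231} supplies — is the \emph{opposite} inequality $\iw_A(\gamma g)^\rho\ge\iw_A(g)^\rho$. Indeed, from $\phi(g)=\phi(\gamma g)\le (CN)^{CN}\iw_A(\gamma g)^{-N\rho}$ one wants to conclude $\phi(g)\le (CN)^{CN}\iw_A(g)^{-N\rho}$, which requires $\iw_A(\gamma g)^{-N\rho}\le\iw_A(g)^{-N\rho}$, i.e.~$\iw_A(\gamma g)^\rho\ge\iw_A(g)^\rho$. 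With your inequality the reduction step does not go through.

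Third, and more seriously, the detour through exponential decay is both logically backward and quantitatively wrong. In the paper, Theorem~\ref{thm:uniform} is proven directly (via Proposition~\ref{spfcor} and the telescoping argument in \S\ref{uniform}), and then Theorem~\ref{mainthm} is \emph{deduced from it} by the elementary lemma in \S\ref{expdecay}; so using Theorem~\ref{mainthm} to derive the uniform rapid decay is circular as the appendix is structured. Moreover, you misstate Theorem~\ref{mainthm}: it gives $\phi(ak)\le\exp\!\bigl(-c\,(\max_{\a\in\Delta_+}a^\a)^d\bigr)$, which is \emph{doubly} exponential in $\|\ln a\|$, not the single exponential $e^{-c\|\ln a\|}$ you assert. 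With your (weaker) form, the two-regime split fails: for $N>c/c'$ there is no $X$ beyond which $e^{-cX}\le e^{-c'NX}$, since that would require $c\ge c'N$; and in the ``bounded'' regime $X\le (2c'/c)N$ the needed inequality $e^{(c'N-c)X}\le(CN)^{CN}$ fails because the left-hand side can be as large as $e^{\Theta(N^2)}$, which exceeds $(CN)^{CN}=e^{\Theta(N\ln N)}$. A fixed exponential rate $e^{-cX}$ simply cannot dominate $e^{-c'NX}$ uniformly in $N$; one needs the genuinely super-exponential decay in $\|\ln a\|$ that the correct statement of Theorem~\ref{mainthm} provides (after which a short optimization in $Y=\max_\a a^\a$ does recover $(CN)^{CN}$). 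But again, none of this is necessary: Theorem~\ref{thm:uniform} already says exactly what you want on Siegel sets.
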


\noindent This is exactly the statement of (\ref{thm:uf}) if $g$ is in some Siegel set $\mf{S}_t$. In general, if $g \in G$ we can find $\gamma \in \Gamma$ such that $\gamma g \in \mf{S}_t.$ By  Remark~\ref{remark231}  we have that \be{mineq} \iw_A(\gamma g )^{\rho}  \ \ \geq \ \ \iw_A(g)^{\rho}\,, \ee
and so the asserted estimate for $\phi(g)=\phi(\g g)$ reduces to the estimate on $\mf{S}_t.$

\tpoint{Remark}\label{remark731} The explicit $N$-dependence in the estimate (\ref{cor:ufrho}) is needed in showing the boundedness of (\ref{est:-1}) below.  Indeed, since $N$ there depends on the Bruhat cell,  the classical rapid decay statements (in which $(CN)^{CN}$ is replaced by some constant depending on $N$) are insufficient.


\spoint\label{sec:4.8.1}
This subsection contains the proof of Theorem~\ref{main}.  Since  cusp forms are bounded,  the convergence for $\re \nu > 2 h^{\vee}$ follows from that of (\ref{deg:es}) (as observed by Garland in \cite{ga:zuck}).
 We shall prove the theorem for   $\re \nu \leq 2 h^{\vee}$  by leveraging the decay of the cusp form to dominate the series (\ref{cusp:es}) by a convergent series of the form (\ref{deg:es}), but with $\nu$ replaced by some $\nu_0 \gg \re\nu.$

\tpoint{ Step 1:~Main Analytic Estimate.} For $w \in W^\th$ let $\hgam(w) = \hgam \cap \hP w \hP$, which is independent of the representative in $\mathbf N$  taken for $w$.
Each set $\wh{\G}(w)$ is invariant under $\wh{\G}\cap \wh{P}$.
 Using the Bruhat decomposition, the Eisenstein series can then be written as \begin{multline}\label{eis:w:r}  E_{\phi, \nu}( \eta(s) \, g) \ \  = \ \  \sum_{w \, \in \,  W^\th} \sum_{ \gamma  \, \in \,  (\hgam \cap \hP) \backslash \hgam(w)  } \mcen(\gamma \, \eta(s) \, g )^{\nu} \  \Phi(\iw_{M_\th}(\gamma \, \eta(s) \, g ))\,.
  \end{multline}
Recall that we have assumed $\Re{\nu} \le 2h^\vee$.  Choose $\nu_0 \in \R$ with $ \nu_0   > 2 h^\vee \ge  \re \nu $, so that
 the  series  (\ref{deg:es}) converges when $\nu$ is replaced by $\nu_0.$ Consider the inner sum on the right hand side of (\ref{eis:w:r}) for a fixed element $w \in W^{\theta},$
\be{es:w}
\sum_{\gamma \, \in \,  (\hgam \cap \hP) \backslash \hgam(w) } \mcen(\gamma \, \eta(s) \, g)^{\nu} \ \Phi(\iw_{M_{\th}}(\gamma \, \eta(s) \, g ) )    \, .
\ee
In the notation of Corollary~\ref{sec:3.4}, let
\begin{multline}\label{y}
 y  \ \ =  \ \ \mcen(\gamma \, \eta(s) \, g)  \ \ \in \ \  \R_{> 0}   \ \    \  \text{ and } \  \ \    x  \ \ = \ \  a^\rho  \ \ = \ \  \iw_{\wh{A}_{cl}} ( \gamma \, \eta(s) \, g )^{\rho}  \ \ \in \ \  \R_{ > 0}\,.
  \end{multline}
By  the definition of $\Phi$ given in (\ref{vphi}),
$
\Phi(\iw_{M_{\th}}(\gamma  \eta(s)   g))  =  \phi( \pi_{L'_{\th}}(\iw_{M_{\th}}(\gamma  \eta(s)  g) ) )$.
Since $\ha_{cl}=A(\th),$ we may conclude from what we have noted after (\ref{iwA(th)})
that
\be{A':Acl} \iw_{A'(\th)}\( \pi_{L'_{\th}} ( \iw_{M_\th}(\gamma \eta(s) \, g)) \)  \ \ = \ \  \pi_{L'_\th} ( \iw_{\ha_{cl}}(\gamma \, \eta(s)\, g) )\,.\ee
Applying
 Theorem~\ref{decay:body}
  we conclude that there exists a constant $C_1 >0$ such that  \be{estimate:cusp}\Phi(\iw_{M_{\th}}(\gamma \,\eta(s) \, g))  \ \ = \ \  \vphi\( \pi_{L'_{\th}} ( \iw_{M_\th}(\gamma \eta(s) \, g))\) &\leq&  (C_1 N)^{C_1 N} x^{-N}  \ee
for any $N\in \Z_{>0}$.

%

\tpoint{Step 2:~Comparing the central contribution.}  From Corollary \ref{cor:iwineq:2} there exist constants $C_2, D >0$ independent of $\gamma \in \hgam \cap \hb w \hb$, but depending locally uniformly on $g$, such that
\be{y:inequ:2}
 x  \ \ \geq  \ \ D \, y^{-(C_2 (\ell(w)+1))^{-1}}\,.
 \ee Choose  a positive real $d$ such that $dC_2  \in \zee_{>0}$ and
\be{d}  d  \ \ > \ \    \nu_0  \ - \  \re \nu   \ \ > \ \  0\,, \ee and let $N=   d  C_2  (\ell(w) + 1).$ From (\ref{y:inequ:2}) we obtain
\begin{multline}\label{est2} x^{-N}\, y^{\re\!\nu  - \nu_0}  \ \ \leq  \ \  D^{-N} \, y^{ \frac{N}{C_2 (\ell(w)+1)} }  y^{\re\!\nu - \nu_0} \ \ = \ \   D^{-N} \, y^{d} \, y^{\re\!\nu - \nu_0}\,. \end{multline} Thus
\begin{equation} \label{estimate:term}
\aligned
\mcen(\gamma \, \eta(s) \, g)^{\re\!{\nu} }\ \Phi(\iw_{M_\th}(\gamma \, \eta(s) \, g )   )    \ \ &  = \ \   y^{\re\! \nu} \  \Phi(\iw_{M_\th}(\gamma \, \eta(s) \, g )) \\  & \leq \ \    (C_1N)^{C_1N}  \ y^{\re\!\nu}  \ x^{-N} \\ & \leq  \ \   (C_1N)^{C_1N}  \ y^{ \nu_0} \  ( x^{-N} y^{\re \! \nu  - \nu_0 })\, \\
 & \leq  \ \ (C_1N)^{C_1N} \,y^{ \nu_0 }\, D^{-N} \, y^a\,, \endaligned \end{equation}
where  $a=d+\re\nu-\nu_0$ is positive by  (\ref{d}).

On the other hand,
\begin{multline}\label{lny}
  \ln y  \ \ = \ \  \la \Lambda_{\ell+1} , \ln(\iw_{\eta(s)\ha} (\gamma \, \eta(s) \, g)) \ra  \ \ = \ \  \la \Lambda_{\ell+1}, H_1 \ra \ +  \ \la \Lambda_{\ell+1} , H_2 \ra  \ + \  \la \Lambda_{\ell+1} , H_3  \ra
   \end{multline}
   in the notation of Theorem~\ref{iwineq}.  The second inequality in (\ref{m:bound}) states that $ \la \Lambda_{\ell+1}, H_1 \ra   \leq   0$. From (\ref{iw2:1}) and Lemma~\ref{l(w):ineq}, the term $\la \Lambda_{\ell+1} ,H_2 \ra$ is bounded above by a quadratic polynomial in $\ell(w)$ with   strictly negative quadratic term. Furthermore, for fixed $\eta(s) g \in \hf{S}_t,$ it follows from (\ref{new3.43}) that $| \la \Lambda_{\ell+1}, H_3  \ra | $ is bounded above by a  linear polynomial in $\ell(w)$ with coefficients that depend locally uniformly in $g$. Hence, in total $y$ is bounded above by an expression of the form $e^{p(\ell(w))}$ where $p(\cdot)$ is a quadratic polynomial with strictly negative quadratic term.  By our choice of $N$
the term $D^{-N} \, (C_1N)^{C_1N}$ is bounded above by a constant times $e^{c' \ell(w) \ln \ell(w)}$ for some positive constant $c'$. Hence, the expression \be{est:-1} (C_1 N)^{C_1N} \,  D^{-N} \, y^a  \ee is bounded as $\ell(w) \rr \infty,$ and so we can   bound  (\ref{cusp:es}) by a sum of the form
\begin{multline}\label{est:0}
 \sum_{w \, \in \,  W^\th} \sum_{ \gamma  \, \in \,  (\hgam \cap \hP) \backslash \hgam(w)  }
  \mcen(\gamma \eta(s) \, g)^{\nu_0} \ \  = \ \
 \sum_{\gamma \,\in\,( \hgam \, \cap \, \hp)  \setminus \hgam  } \mcen(\gamma \eta(s) \, g)^{\nu_0}  \,. \end{multline}
  The convergence of (\ref{cusp:es}) then follows from the fact that  $\nu_0>2h^\vee$ is in the range of convergence of (\ref{deg:es}).


\spoint \label{funfield} Instead of working over $\Q,$ we can also study cuspidal loop Eisenstein series over a function field $F$ of a smooth projective curve $X$ over a finite field. It was in this setting that Braverman and Kazhdan originally noticed the entirety of cuspidal Eisenstein series \cite{bk:ad}. In fact, they observed something stronger which is peculiar to the function field setting. Namely, for every fixed element in the appropriate symmetric space, the Eisenstein series is   a finite sum. Their argument is geometric in nature and relies on estimating the number of points in certain moduli spaces which arise in a geometric construction of Eisenstein series.

Alternatively, one can also easily adapt the arguments in this paper to the function field setting to reprove the result of Braverman and Kazhdan. We briefly indicate the argument here. As in the proof of Theorem~\ref{main}, we first break up the Eisenstein series into a sum over Bruhat cells indexed by $w \in W^{\theta}.$ On each such cell, it is easy to see using the arguments in \cite[Lemma~2.5]{ga:ms2} that there can be at most finitely many elements whose classical Iwasawa component lies in the support of any compactly supported function on $(Z(M_{\th,\ad_F}) M_{\th, F})\backslash M_{\th, \ad_F}/(K_{\ad_F}\cap M_{\th, \ad_F})$. It is a result of Harder (see \cite[Lemma~I.2.7]{mw}) that cusp forms do indeed have  compact support on their fundamental domain.  Hence, only finitely many terms contribute to the cuspidal loop Eisenstein series on each cell. Moreover, one can use an analogue of our main Iwasawa inequalities Theorem~\ref{iwineq} to show that cells corresponding to $w$ with $\ell(w)$ sufficiently large cannot contribute \emph{any} non-zero elements to the Eisenstein series. Indeed, as in the proof of Theorem~\ref{main}, one may again show that as $\ell(w) \rr \infty$ the central term of the Iwasawa component is bounded above by $e^{p(\ell(w))}$ where $p(\cdot)$ is a quadratic polynomial with strictly negative quadratic term. On the other hand, the analogue of (\ref{y:inequ:2}) dictates that as $\ell(w) \rr \infty$ the central piece grows smaller and the classical piece must grow large and therefore eventually lie outside the support of the cusp form.

\appendix

\section{$K$-finite cusp forms decay exponentially} \label{appendix}

\subsection{Introduction and statement of main result}

In this appendix we give a proof of  Theorem~\ref{decay:body}.
This  rapid decay estimate with uniform constants will be deduced from  an exponential-type decay statement.  Historically rapid decay estimates (without uniformity) were established in order to circumvent   the apparent difficulty of proving the exponential decay estimates familiar in the classical theory of automorphic forms for $SL(2,\R)$.  As our argument here indicates, one gets qualitatively similar estimates for  $K$-finite cusp forms on general groups by injecting a small amount of one-variable hard analysis.  Stronger exponential decay results were proven by Bernstein using the more sophisticated technique of holomorphic continuation of representations (for which we refer to Kr\"otz-Opdam \cite{Kr:opdam} for $K$-fixed cusp forms, and unpublished notes of theirs for $K$-finite forms).
 Although we state and prove the results here in the setting of Chevalley groups used in the body of the paper, the results and the methods are applicable to general groups.

  The application to Eisenstein series in the body of this paper uses only $K$-fixed forms, since (as noted in the introduction) at present it is difficult to formulate a definition of Eisenstein series induced from general $K$-finite forms without further developments in the representation theory of affine loop groups.  However, this issue seems independent of the analysis we used to demonstrate convergence, and so the bounds we prove here should ultimately play the same role in showing the everywhere absolute convergence (and hence entirety) of those series as well.  Since this estimate does not appear in the literature we have chosen to include this appendix.

 Let $G$ be a Chevalley group over $\R$ and $\Gamma$ an arithmetic subgroup with respect to the Chevalley structure. Other notations used here will have the same meaning as in the main body of this paper (see \S\ref{section:notation}A,  in particular (\ref{Atdef})).

\begin{nthm}\label{mainthm}   Let $\phi$ be a $K$-finite cusp form on $\G\backslash G$.   For any fixed $t >0$  there exist constants $c,d>0$ such that $\phi$ satisfies the estimate
\begin{equation}\label{mainestimate}
    \phi(ak) \ \ \le \ \ \exp\(-c |\max_{\a\,\in\,\Delta_+}a^\a|^d\)\ \ \ \ \  \text{for all} \ \ a\,\in\,A_t\,.
\end{equation}
\end{nthm}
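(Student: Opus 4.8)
The plan is to exploit four standard properties of $\phi$ --- $\mathcal Z(\mathfrak g)$-finiteness, $K$-finiteness, moderate growth, and boundedness on $\Gamma\backslash G$ --- together with the vanishing of its constant term along every proper standard parabolic, by first turning the constant-term vanishing into a strong restriction on the Fourier expansion of $\phi$ along the unipotent radical $U$ of the minimal parabolic, and then feeding that into the asymptotics of a Bessel-type ordinary differential equation; the latter is the ``one-variable hard analysis'' referred to above.

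\textbf{Step 1 (only generic Fourier coefficients survive).} Abelianize $U^{\mathrm{ab}}=U/[U,U]\cong\prod_{i=1}^{\ell}U_{\alpha_i}$; for $m=(m_1,\dots,m_\ell)\in\Z^{\ell}$ let $\psi_m$ be the character of $U$ trivial on $[U,U]$ and equal to $m$ on $U^{\mathrm{ab}}$, and set $\phi_m(g)=\int_{(\Gamma\cap U)\backslash U}\phi(ug)\,\overline{\psi_m(u)}\,du$. I would show $\phi_m\equiv 0$ unless every $m_i\ne 0$: for a proper subset $S\subsetneq\{1,\dots,\ell\}$ the positive roots that are not one of the $\alpha_i$, $i\in S$, form an additively closed set (a simple root is not a sum of positive roots), hence span a unipotent group $N_S$ containing the radical $U_{P_S}$ of the proper parabolic $P_S$ with Levi $\langle\alpha_i:i\in S\rangle$, and differing from it by a unipotent subgroup of that Levi; integrating $\phi$ over $(\Gamma\cap N_S)\backslash N_S$ then factors through the vanishing constant term $\phi_{P_S}$, forcing $\phi_m\equiv 0$ whenever $m$ is supported on $S$. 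In particular $\phi_0\equiv 0$ (cuspidality itself), and the $[U,U]$-average of $\phi$ equals $\sum_{m\in(\Z\setminus\{0\})^{\ell}}\phi_m\psi_m$, a sum over \emph{generic} characters only.

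\textbf{Step 2 (super-exponential decay via a Bessel ODE).} Fix generic $m$. Using $\mathcal Z(\mathfrak g)$- and $K$-finiteness, I would show that the restriction of $\phi_m$ to a dominant ray $a(r)=\exp\bigl(r\sum_i\omega_i^{\vee}\bigr)$, paired against any fixed $K$-type, satisfies a linear ODE in $r$ of moderate growth whose ``potential'' contains the term $\sum_i c_i\,m_i^{2}\,a^{2\alpha_i}$ coming from the simple root vectors acting through the nonzero frequencies $m_i$, the remaining contributions being of lower order. A one-dimensional Gronwall/energy comparison with the modified Bessel equation then forces every moderate-growth solution to be dominated by $\exp\bigl(-c\sum_i|m_i|\,a^{\alpha_i}\bigr)$ once the potential overtakes the eigenvalue, which by genericity of $m$ happens uniformly on $A_t$. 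Summing over $m$ (the $\phi_m$ are polynomially bounded in $m$, and $a^{\alpha_i}>t$ on $A_t$) gives decay $\exp(-c'\max_i a^{\alpha_i})$; since $\max_{\alpha\in\Delta_+}a^{\alpha}\le(\max_i a^{\alpha_i})^{h_0}$ with $h_0$ the height of the highest root, this is $\exp\bigl(-c'(\max_{\alpha\in\Delta_+}a^{\alpha})^{1/h_0}\bigr)$. The non-abelian correction $\phi-\sum_m\phi_m\psi_m$ is handled by running the same reasoning down the descending central series of $U$, where cuspidality again confines the surviving data to frequencies whose $a$-conjugates are of comparable size; evaluating at $u=e$ then yields (\ref{mainestimate}) where $\max_{\alpha}a^{\alpha}$ is large, and elsewhere it follows from the boundedness of $\phi$.

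The main obstacle is Step 2: extracting super-exponential decay of the generic Whittaker coefficients from $\mathcal Z(\mathfrak g)$-finiteness and moderate growth alone, with the dependence on the frequency $m$ explicit enough to sum, while keeping track of the $K$-derivative terms and of the Jordan blocks that arise because $\phi$ is only $\mathcal Z(\mathfrak g)$-finite rather than a genuine eigenfunction; the noncommutativity of $U$ adds bookkeeping but no new idea, and the input of Step 1 together with the final comparison of $\max_i a^{\alpha_i}$ with $\max_{\alpha\in\Delta_+}a^{\alpha}$ is routine.
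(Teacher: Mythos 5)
Your proposal takes a route genuinely different from the paper's, and in doing so inherits two real gaps.  The paper's proof (Appendix \S A.2--A.4) never introduces a Fourier or Whittaker expansion at all; its ``one-variable hard analysis'' refers not to a Bessel ODE but to the elementary estimate $\|\sigma^{(N)}\|_1 \ll (2N)^{2N}$ for the bump function $\sigma(x)=e^{-1/(1-x^2)}$.  That estimate propagates through a multivariable bump $\psi_c$ on $G$ and its $K$-average $f_c$, and Harish-Chandra's identity $\phi=\sum_i a_i(\phi\star f_{c_i})$ (valid precisely because $\phi$ is $K$-finite) then gives the uniform derivative bound $\|\delta(X)\phi\|_\infty \le (CN)^{CN}$ for degree-$N$ elements $X$ of $\mathfrak U$.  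Running the standard cusp-form telescoping argument along the lower central series of $U_P$ for each simple root, \emph{keeping track of these constants}, gives $\phi(g)\le (CN)^{CN}\,\iw_A(g)^{-N\rho}$ on a Siegel set, and a short lemma (\S A.4) converts the explicit $N$-dependence into the exponential bound (\ref{mainestimate}).  There is no unfolding, no genericity argument, and no ODE.

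Your Step 2 is the missing core of the argument, and the difficulty is not merely technical.  The Casimir operator gives a second-order \emph{partial} differential operator on the $\ell$-dimensional torus $A$; restricting to a one-parameter dominant ray does not produce a closed ODE because transverse derivatives of the Whittaker coefficient still appear, and the $\mathcal Z(\mathfrak g)$-finiteness gives only a finite system with possible Jordan blocks, not a single eigenfunction equation.  Obtaining decay of the form $\exp(-c\sum_i|m_i|a^{\alpha_i})$ with constants uniform in both the spectral parameters and the frequency $m$ is exactly the sort of bound that is notoriously hard already for $SL_3$ Whittaker functions and is unavailable in the generality of arbitrary Chevalley groups.  Your Step 1 is also incomplete: cuspidality kills $\phi_m$ when some $m_i=0$, as you say, but the ``non-abelian correction $\phi-\sum_m\phi_m\psi_m$'' is not a bookkeeping matter.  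The Fourier expansion along a non-abelian $U$ has degenerate coefficients attached to nontrivial nilpotent orbits, and the Piatetski-Shapiro/Shalika unfolding that reduces everything to generic coefficients is specific to $GL_n$ and not available here.  The paper's route was chosen precisely because it avoids having to prove uniform Whittaker decay and to handle the full non-abelian Fourier expansion.
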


\noindent
The constant $c>0$ is certainly necessary,
as can already been seen in the context of  classical modular forms on the complex upper half plane.   Indeed, a cusp form invariant under parabolic transformations $z\mapsto z+w$ is bounded by a multiple of $e^{-2\pi w^{-1} \text{Im}(z)}$.  Bernstein's holomorphic continuation method mentioned above shows that $d\ge 1$, in consonance with classical modular forms.
 Up to logarithmic factors, we are not aware of any examples where $d$ is not 1, though we make some comments in \S\ref{stringtheory} about where they might occur.  Even with $d\neq 1$ the estimate (\ref{mainestimate}) could be regarded as exponential decay from the vantage point of the Lie algebra   $\frak h$ of $A$, where
 dilation by a constant multiple has the effect of scaling $d$ (hence the title of this appendix).

The rest of the appendix is arranged as follows:~in $\S\ref{specialfunction}$ we first study  the decay of a certain  function on $G$  whose properties will be crucial later in the argument.
 Combining this estimate with a slight enhancement of a result of Harish-Chandra (proposition $\ref{HCprop}$), we then retrace our way in $\S \ref{uniform}$ through the usual proof of rapid decay to obtain the asserted rapid decay with uniform constants.  We then    deduce  exponential decay from this estimate in $\S \ref{expdecay}$. Finally, in \S\ref{counterexample}-\ref{stringtheory}  we aim to place this result in a broader context, by giving an example which demonstrates the necessity of the $K$-finite condition and explaining the importance of estimates of the form (\ref{mainestimate}) in string theory.

\subsection{Derivative estimates on a particular function} \label{specialfunction}

\aspoint We begin by estimating the norm of a certain function on the real line. Let $\sigma\in C^\infty_c(\R)$ be the ``bump'' function
\begin{equation}\label{f1choice}
    \sigma(x) \ \ := \ \ \left\{
                        \begin{array}{ll}
                          e^{-1/(1-x^2)}, & |x|<1\,, \\
                          0\,, & \hbox{otherwise\,,}
                        \end{array}
                      \right.
\end{equation}
which is supported in the interval $[-1,1]$.

\begin{nlem} \label{onevar} The function $\sigma$ satisfies the $L^1$-norm estimate
\be{onevar:eq}
\left\| \smallf{d^N \sigma}{dx^N} \right\|_1 \ \   \ll \ \ (2N)^{2N}\ee
for positive integers $N$.
 \end{nlem}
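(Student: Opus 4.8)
The plan is to bound $\|\sigma^{(N)}\|_1$ by holomorphic extension together with Cauchy's estimates, exploiting the fact that although the \emph{size} of $\sigma^{(N)}$ blows up near the endpoints $x=\pm1$, the factor $e^{-1/(1-x^2)}$ decays there fast enough to keep the $L^1$-integral under control.

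First I would observe that $\sigma$ extends to the holomorphic function $z\mapsto e^{-1/(1-z^2)}$ on $\C\setminus\{\pm1\}$. Fix a constant $c\in(0,1)$ (one may take $c=\tfrac12$), and for $x_0\in(-1,1)$ put $\delta=1-|x_0|$ and $\rho=c\,\delta$. The closed disc $\{|z-x_0|\le\rho\}$ avoids $\pm1$, since $\rho<\delta\le1+|x_0|$. The key estimate is that on the circle $|z-x_0|=\rho$,
\[
|\sigma(z)|\ \le\ \exp(-\beta/\delta),\qquad \beta:=\tfrac{1-c}{2(1+c)^2}>0.
\]
This I would obtain from the partial-fraction identity $\frac1{1-z^2}=\frac1{2(1-z)}+\frac1{2(1+z)}$: for $z$ in that disc the real part of $1+z$ is positive, so $\operatorname{Re}\frac1{1+z}>0$, while writing $1-z=\delta-(z-x_0)$ with $|z-x_0|=\rho$ gives $\operatorname{Re}\frac1{1-z}\ge\frac{\delta-\rho}{(\delta+\rho)^2}=\frac{1-c}{\delta(1+c)^2}$; hence $\operatorname{Re}\frac1{1-z^2}\ge\beta/\delta$ and $|\sigma(z)|=\exp(-\operatorname{Re}\frac1{1-z^2})$. (The case $x_0<0$ is symmetric, using $\sigma(-x)=\sigma(x)$, equivalently by running the argument with the nearest singularity $-1$.)

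Next, Cauchy's inequality gives $|\sigma^{(N)}(x_0)|\le N!\,\rho^{-N}e^{-\beta/\delta}=N!\,c^{-N}\delta^{-N}e^{-\beta/\delta}$. Integrating and using the symmetry of $\sigma$,
\[
\Big\|\tfrac{d^N\sigma}{dx^N}\Big\|_1\ =\ 2\int_0^1|\sigma^{(N)}(x_0)|\,dx_0\ \le\ \frac{2\,N!}{c^N}\int_0^1\delta^{-N}e^{-\beta/\delta}\,d\delta,
\]
and the substitution $w=1/\delta$ turns the last integral into $\int_1^\infty w^{N-2}e^{-\beta w}\,dw\le\Gamma(N-1)\,\beta^{-(N-1)}=(N-2)!\,\beta^{-(N-1)}$ for $N\ge2$. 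Finally I would apply the Stirling bound $N!\,(N-2)!\le(N!)^2\le e^2\,N\,(N/e)^{2N}$ to conclude
\[
\Big\|\tfrac{d^N\sigma}{dx^N}\Big\|_1\ \le\ 2e^2\,\beta\,N\,\Big(\tfrac1{c\,\beta\,e^2}\Big)^{N}N^{2N},
\]
and a numerical check — with $c=\tfrac12$ one has $\beta=\tfrac19$, so $\tfrac1{c\beta e^2}=18/e^2<4$ — shows the right-hand side is $\le C\,(2N)^{2N}$ for an absolute constant $C$. The cases $N=1,2$ are immediate since then $\|d^N\sigma/dx^N\|_1$ is a fixed finite number (indeed $\|\sigma'\|_1=2/e$) while $(2N)^{2N}\ge4$.

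I expect the only delicate point to be obtaining the displayed bound for $|\sigma(z)|$ on the circle with a good enough constant. One must route the contribution of the distant singularity through the positivity of $\operatorname{Re}\frac1{1+z}$ rather than through a crude triangle-inequality estimate of $\operatorname{Re}(1-z^2)$ and $|1-z^2|$; the latter would force $c<\tfrac13$ and, after Stirling, push the base of the exponential above $4$, yielding only $(C'N)^{2N}$ with $C'>2$. For the paper's application (Theorem~\ref{decay:body}) even that weaker bound would suffice, since only an estimate of the shape $(CN)^{CN}$ is used; the sharper constant ``$2$'' in $(2N)^{2N}$ comes out precisely because $e^2>4$.
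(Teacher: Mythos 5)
Your proof is correct, and it takes a genuinely different route from the paper's.  The paper works on the Fourier side:~it invokes a saddle-point estimate $\widehat{\sigma}(r)\ll r^{-3/4}e^{-\sqrt{2\pi r}}$ (citing an external note for the details), computes the $L^2$-moment $\|\widehat{\sigma}(r)\,r^N\|_2$, and then converts back via Parseval and Cauchy--Schwartz on the compact support.  You instead work directly on the physical side, extending $\sigma$ holomorphically to $\C\setminus\{\pm1\}$ and applying Cauchy's estimate on a disc of radius proportional to the distance $\delta=1-|x_0|$ to the nearest singularity, then integrating the resulting pointwise bound $|\sigma^{(N)}(x_0)|\le N!\,(c\delta)^{-N}e^{-\beta/\delta}$ in $\delta$.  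The advantage of your argument is that it is self-contained (no black-box saddle-point analysis) and makes the Gevrey-$2$ nature of $\sigma$ visible as a direct consequence of the first-order pole of $\frac{1}{1-z^2}$ at $\pm1$; the paper's version is a bit slicker \emph{given} the Fourier decay.  Both arrive at the sharp base $2$ in $(2N)^{2N}$.  One small remark:~your lower bound $\operatorname{Re}\frac{1}{1-z}\ge\frac{\delta-\rho}{(\delta+\rho)^2}$ on the circle is correct but not tight --- a short computation minimizing over the circle gives the better bound $\frac{1}{\delta+\rho}$, which would yield $\beta=\frac{1}{2(1+c)}$ and thereby relax the numerical constraint $\frac{1}{c\beta e^2}<4$ considerably; but since even your looser constant suffices, this is only cosmetic.
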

\begin{proof}
A standard application of the saddle point method  (e.g., the note
\url{http://math.mit.edu/~stevenj/bump-saddle.pdf}  -- which uses a different normalization of Fourier transform)  shows that its Fourier transform satisfies the bound $\widehat{ \sigma}(r)\ll r^{-3/4}e^{-\sqrt{2\pi r}}$.  In particular
\begin{equation}\label{f1moment}
\aligned
\left\|\widehat{ \sigma}(r)r^{N}\right\|_2^2 \ \ = \ \ \int_\R\widehat{ \sigma}(r)^2r^{2N}dr \ \ & \ll \ \  \int_0^\infty e^{-\sqrt{8\pi r}}r^{2N-3/2}dr
\\ \ll \ \ (2\pi)^{-2N}\,\G(4N-1) \ \ & \le  \ \ (2\pi)^{-2N}\,(4N)! \ \ \ll \ \ (2\pi)^{-2N}\,(2N)^{4N}
\endaligned
\end{equation}
and so by Cauchy-Schwartz,   the compact support of $\sigma$, and Parseval's Theorem we have
\begin{equation}\label{f1moment2}
  \left\|\smallf{d^N \sigma}{dx^N}\right\|_1 \ \ \ll \ \    \left\|\smallf{d^N \sigma}{dx^N}\right\|_2 \ \ = \ \   (2 \pi)^N\,\|\widehat{  \sigma}(r)r^N\|_2 \ \ \ll \ \ (2N)^{2N}\,.
\end{equation}
\end{proof}

\noindent
For further reference we note that
\begin{equation}\label{f1moment3}
    \left\| \smallf{d^N}{dx^N}\,\sigma(c\i x) \right\|_1 \ \ \ll \ \ (2Nc^{-1})^{2Nc^{-1}}
\end{equation}
for $c<1$.
Incidentally, one might wonder whether  replacing  $\sigma$  in (\ref{f1choice}) with a different choice of function might lead to significantly better estimates later on.  Other than improving the constant ``2'' in (\ref{onevar:eq}) this is impossible \cite{sunyer},  consistent with the fact one cannot improve upon the   known exponential decay of classical holomorphic cusp forms.

\aspoint We next use $\sigma$ construct a family of bump functions on our group  $G$ with Lie algebra ${\frak g}={\frak n}_{-}\oplus {\frak h}\oplus {\frak n}_{+}$.  Let $\{Y_1,Y_2,\ldots,Y_k\}$, $\{H_1,\ldots,H_r\}$, and  $\{X_1,X_2,\ldots,X_k\}$ denote bases of these three summands, respectively.  Combined they give a basis
\begin{equation}\label{Bbasis}
{\mathcal B} \ \ = \ \ \{Y_1,\ldots,Y_k,H_1,\ldots,H_r,X_1,\ldots,X_k\}
\end{equation}
 of the $d=2k+r$ dimensional Lie algebra $\frak g$.  Let
\begin{equation}\label{tmaps}
\aligned
    n_{-}(t_1,\ldots,t_k) \ \ & := \ \ e^{t_1 Y_1}e^{t_2 Y_2}\cdots e^{t_k Y_k}\,,   \\
    a(t_{k+1},\ldots,t_{k+r}) \ \ & := \ \ \ e^{t_{k+1}H_1}\cdots
    e^{t_{k+r}H_r} \,, \ \ \text{and} \\
    n_{+}(t_{k+r+1},\ldots,t_d) \ \ & := \ \ e^{t_{k+r+1}X_1}\cdots e^{t_d X_k}\,.
\endaligned
\end{equation}
Then for
    $c>0$ sufficiently small the image of the set $\{|t_i|\le c|1\le i \le d\}$ under the map
\begin{equation}\label{exponentialcoordinatemap}
    (t_1,t_2,\ldots,t_d) \ \mapsto \  n_{-}(t_1,\ldots,t_k)\,a(t_{k+1},\ldots,t_{k+r}) \, n_{+}(t_{k+r+1},\ldots,t_d)
\end{equation}
diffeomorphically
sweeps out a compact neighborhood of the identity in $G$.  Define a smooth function $\psi=\psi_c$ supported in this neighborhood by the formula
\begin{equation}\label{psidef}
     \psi_c(n_{-}\,a\,n_{+}) \ \  = \ \ \prod_{i\,=\,1}^d \sigma(c\i  t_i )\,,
\end{equation}
where $n_{-}an_{+}$ represents the righthand side of (\ref{exponentialcoordinatemap}).

Let $\delta$ denote the right translation operator on functions on $G$.  This action extends to $\frak g$ as the left-invariant differential operators
\begin{equation}\label{rhoX}
    (\delta(X)f)(g) \ \ = \ \ \left.\smallf{d}{dt}\right|_{t=0}f(g e^{tX})\,, \ \ \ X\,\in\,{\frak g}\,,
\end{equation}
and then to  the universal enveloping algebra ${\frak U}$ of $\frak g$  by composition.
\begin{nprop}\label{psiderivbd}
For any degree $N$ element $X\in \frak U$
\begin{equation}\label{psiderivbd1}
    \| \delta(X)\psi_c \|_1 \ \   \ll \ \ (c'N)^{c'N}\,,
\end{equation}
where $c'>0$ is a constant that depends on $c$ and $G$.
\end{nprop}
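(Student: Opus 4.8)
The plan is to transport the left-invariant operator $\delta(X)$ into the exponential coordinates $(t_1,\dots,t_d)$ of (\ref{exponentialcoordinatemap}), in which $\psi_c$ has the explicit product shape (\ref{psidef}), and then to control the resulting operator by combining the Cauchy estimates afforded by the real-analyticity of the coordinate chart with the one-variable bound (\ref{f1moment3}) for $\sigma$. By the Poincar\'e--Birkhoff--Witt theorem, every element of $\mathfrak U$ of degree $\le N$ is a linear combination of the $O(N^{d})$ ordered monomials $Z_1Z_2\cdots Z_m$ with $Z_i$ in the basis $\mathcal B$ of $\mathfrak g$ from (\ref{Bbasis}) and $m\le N$; since the polynomial factor $O(N^{d})$ is harmless, it suffices to prove $\|\delta(Z_1)\cdots\delta(Z_m)\psi_c\|_1\ll(c'N)^{c'N}$ for such a monomial, with a constant depending only on $c$ and $G$.

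Work in the chart $\Phi:(t_1,\dots,t_d)\mapsto n_{-}(t_1,\dots)\,a(\dots)\,n_{+}(\dots)$ of (\ref{exponentialcoordinatemap})--(\ref{tmaps}). Since $\exp$ and the multiplication of $G$ are real-analytic, the coordinate expression of $\Phi$ is real-analytic, and for each $Z\in\mathcal B$ the pullback of the left-invariant vector field $\delta(Z)$ of (\ref{rhoX}) is a vector field $\Phi^{*}\delta(Z)=\sum_{k=1}^{d}a_{Z,k}(t)\,\partial_{t_k}$ whose coefficients $a_{Z,k}(t)$ are real-analytic in a neighborhood of the polydisc $\{|t_i|\le c\}$. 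Consequently the Cauchy estimates furnish constants $A,R>0$ depending only on $c$ and $G$ such that $\big|\partial_t^{\alpha}a_{Z,k}(t)\big|\le A\,R^{|\alpha|}\,\alpha!$ on $\{|t_i|\le c\}$ for every multi-index $\alpha$, every $Z\in\mathcal B$, and every $k$.

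Since $\Phi^{*}$ is an algebra homomorphism on differential operators, $\Phi^{*}\!\big(\delta(Z_1)\cdots\delta(Z_m)\big)=\big(\Phi^{*}\delta(Z_1)\big)\cdots\big(\Phi^{*}\delta(Z_m)\big)=\sum_{|\beta|\le m}b_\beta(t)\,\partial_t^{\beta}$, and an induction on $m$ using the Leibniz rule and the Cauchy estimates above shows $\|b_\beta\|_{L^{\infty}(\{|t_i|\le c\})}\le C_1^{\,m}\,m!\le(C_2N)^{C_2N}$ uniformly in $\beta$ (here $m\le N$), for constants $C_1,C_2$ depending only on $c$ and $G$. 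On the other hand $(\psi_c\circ\Phi)(t)=\prod_{i=1}^{d}\sigma(c^{-1}t_i)$ by (\ref{psidef}), so $\partial_t^{\beta}(\psi_c\circ\Phi)$ factors over $i$, and (\ref{f1moment3}) together with the elementary inequality $\prod_i y_i^{\,y_i}\le\big(\sum_i y_i\big)^{\sum_i y_i}$ for $y_i\ge0$ gives $\big\|\partial_t^{\beta}(\psi_c\circ\Phi)\big\|_{L^1(dt)}\ll(2c^{-1}N)^{2c^{-1}N}$, uniformly over $|\beta|\le N$. Finally, the support of $\psi_c\circ\Phi$ lies in $\{|t_i|\le c\}$, where the Jacobian of $\Phi$ is bounded, so $\|\delta(X)\psi_c\|_{L^1(G)}$ is at most a constant times the $L^1(dt)$-norm of $\Phi^{*}(\delta(X))(\psi_c\circ\Phi)$, whence
\[
\|\delta(X)\psi_c\|_1\ \ll\ \sum_{|\beta|\le N}\|b_\beta\|_{\infty}\,\big\|\partial_t^{\beta}(\psi_c\circ\Phi)\big\|_{L^1(dt)}\ \ll\ N^{d}\,(C_2N)^{C_2N}\,(2c^{-1}N)^{2c^{-1}N}\ \ll\ (c'N)^{c'N},
\]
with $c'$ depending only on $c$ and $G$, which is (\ref{psiderivbd1}).

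I expect the main obstacle to be the coefficient estimate $\|b_\beta\|_{\infty}\le C_1^{\,m}\,m!$ for the composed operator: one must check that distributing the $m$ first-order operators through the Leibniz rule yields a sum in which both the number of terms and the size of each term — a product of at most $m$ derivatives, each of some $a_{Z,k}$ and each of order at most $m$ — are jointly controlled by $C_1^{\,m}m!$. (In practice one runs the induction on the mixed quantities $\sup_{|\beta|\le m,\,|\delta|\le j}\|\partial^{\delta}b_\beta\|_{\infty}$, feeding in the bounds on $\partial^{\alpha}a_{Z,k}$.) This is the one place where the specific coordinate chart, rather than mere smoothness, is used: the real-analyticity of the $a_{Z,k}$ — equivalently, the Cauchy estimates — is exactly what keeps the coefficient growth at the $C^{N}N!\le(C'N)^{C'N}$ level rather than something faster; a merely smooth chart would give no uniform control.
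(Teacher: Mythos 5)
Your argument is correct and arrives at the stated bound, but by a genuinely different mechanism than the paper's. The paper never pulls $\delta(X)$ back to a differential operator in the chart coordinates; instead, for $X\in\mathcal B$ it commutes the derivative across the $n_+$ factor via $\Ad(n_+)X$, uses the compactness of the support of $\psi_c$, the fact that $A$ normalizes $\mathfrak n_-$, and boundedness of $\Ad$ on compacta to rewrite $\delta(X)\psi_c$ directly as a bounded linear combination of the $d$ products $\sigma_1(c^{-1}t_1)\cdots\sigma_i'(c^{-1}t_i)\cdots\sigma_d(c^{-1}t_d)$ (see (\ref{psiderivbd3a})--(\ref{psiderivbd3b})), and then iterates that identity $N$ times to reach (\ref{psiderivbd4}). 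This is an algebraic manipulation internal to the group: the $N_-AN_+$ factorization and the $\Ad$-compatibility of the coordinates do the work of your Cauchy estimates, giving the clean factor $M^Nc^{-N}$ with no need to track derivatives of chart coefficients. Your route, by contrast, is more generic --- any real-analytic chart with Cauchy-estimable pulled-back vector fields would serve --- but pays for that generality with the combinatorial induction on $\sup_{|\beta|\le m,\,|\delta|\le j}\|\partial^\delta b_\beta\|_\infty$ that you correctly flag as the crux; unwinding the recursion $b_\gamma^{(m)}=\sum_k a_k\partial_k b_\gamma^{(m-1)}+\sum_k a_k b_{\gamma-e_k}^{(m-1)}$ shows each $b_\gamma^{(m)}$ is a sum of at most $(2d)^m$ products of $m$ derivatives of the $a_{j,k}$ with total order $m-|\gamma|$, whence $\|b_\gamma^{(m)}\|_\infty\le(2dAR)^m\,m!$ by Cauchy, which is indeed $(C_2N)^{C_2N}$. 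Both proofs then feed into Lemma~\ref{onevar} (plus, in your case, the elementary $\prod_i y_i^{\,y_i}\le(\sum_i y_i)^{\sum_i y_i}$) to land on the same $(c'N)^{c'N}$. One small simplification: PBW is not needed at the outset --- any expression of a degree-$N$ element of $\mathfrak U$ as a sum of length-$\le N$ monomials in $\mathcal B$ suffices, and their number $(\dim\mathfrak g)^N$ is also absorbed into the final constant.
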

\begin{proof}
Let us first consider the case where $X\in \frak g$ and the righthand side of (\ref{psidef}) has the more general form
$\prod_{i\le d}\sigma_i(c\i t_i )$, where $\sigma_1,\ldots,\sigma_d \in C^\infty_c(\R)$.  Commuting the derivative in $X$ across the $n_{+}$ factor
\begin{equation}\label{psiderivbd2}
\aligned
    \delta(X)\psi_c(n_{-}\,a\,n_{+}) \ \ & = \ \ \left.\smallf{d}{dt}\right|_{t=0}\psi_c(n_{-}\,a\,n_{+}\,e^{tX}) \\
    & = \ \ \left.\smallf{d}{dt}\right|_{t=0}\psi_c(n_{-}\,a\,e^{t\Ad(n_{+})X}\,n_{+})\,.
\endaligned
\end{equation}
Since $\psi_c(g)$ is supported on elements $g$ which factorize as $g=n_{-}an_{+}$, in which the support of each of the three factors $n_{-}$, $a$, and $n_{+}$ is constrained to a fixed compact set, we observe that
  $\Ad(n_{+})X$ can be written as a linear combination of elements of $\mathcal B$ with bounded coefficients.  Furthermore, since $a\in A$ normalizes ${\frak n}_{-}$ the same compactness argument allows us to write
\begin{equation}\label{psiderivbd3a}
      \delta(X)\psi_c(n_{-}an_{+}) \ \ = \ \ \left.\smallf{d}{dt}\right|_{t=0}\psi_c(n_{-}e^{tY'}ae^{tH'}e^{tX'}n_{+})\,,
\end{equation}
where $Y'\in \frak n_{-}$, $H'\in \frak a$, and $X'\in \frak n_{+}$ depend on $X$, and range over bounded subsets as $X$ varies over the basis $\mathcal B$.  In particular,  the coefficients of $Y'$, $H'$, or $X'$ when expanded in the basis $\mathcal B$ are bounded by a constant $M$ (depending only on $c$ and $G$) as $X$ ranges among the  vectors  ${\mathcal B}$.
 Thus for $X\in {\mathcal B}$
\begin{multline}\label{psiderivbd3b}
     |  \delta(X)\psi_c(n_{-}an_{+}) | \ \ \le \\  Mc^{-1}\,\sum_{i\,=\,1}^d \left| \sigma_1(c\i t_1)\cdots \sigma_{i-1}(c\i t_{i-1})\sigma_i'(c\i t_i)\sigma_{i+1}(c\i t_{i+1})\cdots \sigma_d(c\i t_d)\right| \,.
\end{multline}
Similarly, when $X$ is the tensor product of $N$ basis vectors we have the estimate
\begin{equation}\label{psiderivbd4}
    |  \delta(X)\psi_c(n_{-}an_{+}) | \ \ \le   \ \  M^Nc^{-N}\,\sum_{\srel{(j_1,\ldots,j_d)\,\in\, \Z^d_{\ge 0}}{j_1+\cdots+j_d\,=\,N}}\left| \sigma_1^{(j_1)}(c\i t_1)\,\sigma_2^{(j_2)}(c\i t_2)\cdots \sigma_d^{(j_d)}(c\i t_d)\right|\,,
\end{equation}
as can be seen by iteration.

  In the (compact) range of support of $\psi$, the Haar measure $dg$ in the coordinates (\ref{exponentialcoordinatemap}) is bounded by a multiple of
   $dt_1\cdots dt_d$ and so by the lemma
   \begin{equation}\label{psiderivbd5}
   \aligned
    \|\delta(X)\psi_c\|_1 \ \  & \ll \ \ M^Nc^{-N+d}\sum_{\srel{(j_1,\ldots,j_d)\,\in\, \Z^d_{\ge 0}}{j_1+\cdots+j_d\,=\,N}}\|\sigma_1^{(j_1)}\|_1\cdots \|\sigma_d^{(j_d)}\|_1 \\
    & \ll \ \ M^Nc^{-N+d}\sum_{\srel{(j_1,\ldots,j_d)\,\in\, \Z^d_{\ge 0}}{j_1+\cdots+j_d\,=\,N}}
    (2j_1)^{2j_1}\cdots (2j_d)^{2j_d}\\
    & \le \ \ M^Nc^{-N+d}\sum_{\srel{(j_1,\ldots,j_d)\,\in\, \Z^d_{\ge 0}}{j_1+\cdots+j_d\,=\,N}}
    (2N)^{2j_1}\cdots (2N)^{2j_d} \,.
    \endaligned
   \end{equation}
  The sum above is equal to the multinomial expansion of $(\overbrace{4N^2+\cdots+4N^2}^{N\ \text{times}})^N=(4N^3)^N$, and hence the lemma follows.
\end{proof}

\aspoint Next, we generalize the above estimate to the $K$-twists of the function $\psi$ defined by \begin{equation}\label{fdef}
    f_c(g) \ \ := \ \ \int_{K}\psi_c(k^{-1}gk)\,dk\,.
\end{equation}
For any fixed $X\in {\frak g}$ the elements $\Ad(k^{-1})X$ are bounded as $k$ ranges over the maximal compact subgroup $K$.  In particular if $X \in {\mathcal B}$ then
\begin{equation}\label{rhofbd1}
    (\delta(X)f_c)(g) \ \
    = \ \
    \left.\smallf{d}{dt}\right|_{t=0} \int_K\psi_c(k^{-1}ge^{tX}k)\,dk \ \
     \ll \ \ \sum_{X'\,\in\,{\mathcal B}} \int_K |(\delta(X')\psi_c)(k^{-1}gk)|\,dk\,,
\end{equation}
with an implied constant that depends only on  $K\subset G$.

\begin{nprop} There exists a constant $c'>0$ which depends only on $c$ and $G$ such that
\begin{equation}\label{rhofbd2}
    \|\delta(X)f_c\|_1 \ \ \ll \ \ \sum_{X'\,\in\,{\mathcal B}} \|\delta(X')\psi_c\|_1\ \ \ll \ \
    (c'N)^{c'N}
\end{equation}for all $X \in {\mathcal B}$. \end{nprop}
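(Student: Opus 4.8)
The plan is to transport the left-invariant differential operator $\delta(X)$ across the integration over $K$ in the definition (\ref{fdef}) of $f_c$, thereby reducing everything to the bound for $\psi_c$ already obtained in Proposition~\ref{psiderivbd}. I treat the general case in which $X$ is a product of $N$ elements of ${\mathcal B}$, the case $N=1$ (i.e.\ $X\in{\mathcal B}$) being exactly the pointwise display (\ref{rhofbd1}). First I would record, for $Z\in{\frak g}$ and $k\in K$, the identity $k^{-1}g\,e^{tZ}k = (k^{-1}gk)\,e^{t\,\Ad(k^{-1})Z}$, which upon differentiating in $t$ at $t=0$ gives
\begin{equation}
(\delta(Z)f_c)(g) \ \ = \ \ \int_K \bigl(\delta(\Ad(k^{-1})Z)\,\psi_c\bigr)(k^{-1}gk)\,dk\,.
\end{equation}
Expanding $\Ad(k^{-1})Z$ in the basis ${\mathcal B}$, with coefficients bounded uniformly in $k$ by compactness of $K$ and continuity of $\Ad$, and iterating this identity $N$ times via $\Ad(k^{-1})(Z_1\cdots Z_N)=(\Ad(k^{-1})Z_1)\cdots(\Ad(k^{-1})Z_N)$, I would arrive at the pointwise bound
\begin{equation}
\bigl|(\delta(X)f_c)(g)\bigr| \ \ \le \ \ M^N \sum_{X'} \int_K \bigl|(\delta(X')\psi_c)(k^{-1}gk)\bigr|\,dk\,,
\end{equation}
where $M$ depends only on $K\subset G$ and $X'$ ranges over the at most $(\dim{\frak g})^N$ monomials of degree $N$ in ${\mathcal B}$; this is the exact analogue of (\ref{psiderivbd4}) in the proof of Proposition~\ref{psiderivbd}.

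Next I would integrate over $g\in G$ and interchange the two integrations by Fubini's theorem. The one point here that is not purely formal is that, since $G$ is a Chevalley group and hence semisimple and unimodular, the inner automorphism $g\mapsto k^{-1}gk$ preserves Haar measure; consequently $\int_G \bigl|(\delta(X')\psi_c)(k^{-1}gk)\bigr|\,dg = \|\delta(X')\psi_c\|_1$ for every $k\in K$, and (with $dk$ normalized to total mass $1$) the double integral collapses, yielding
\begin{equation}
\|\delta(X)f_c\|_1 \ \ \le \ \ M^N \sum_{X'} \|\delta(X')\psi_c\|_1\,,
\end{equation}
which is the first inequality asserted. Finally I would invoke Proposition~\ref{psiderivbd} to bound each of the at most $(\dim{\frak g})^N$ terms on the right by $\ll (c'N)^{c'N}$, and then absorb the extra factors $M^N$ and $(\dim{\frak g})^N$ into the constant: since $(c'N)^{c'N}$ dominates $e^{cN}$ for any fixed $c$, this merely requires slightly enlarging $c'$, and the stated bound $\ll (c'N)^{c'N}$ follows after relabeling.

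I do not expect a genuine obstacle here: the argument is a direct transcription of the proof of Proposition~\ref{psiderivbd} with an extra averaging over $K$, and the only bookkeeping that needs care is keeping the various exponential-in-$N$ factors under the umbrella of $(c'N)^{c'N}$. If anything is delicate it is the measure-theoretic step — the conjugation-invariance of Haar measure on $G$ — which is precisely where unimodularity of the Chevalley group (together with compactness of $K$) is used.
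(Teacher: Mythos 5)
Your argument is correct and follows the same route as the paper: transport $\delta(X)$ through the $K$-average using $\Ad(k^{-1})$, integrate the resulting pointwise bound over $G$, appeal to conjugation-invariance of Haar measure (which the paper phrases as bi-invariance) to collapse the $k$-integral, and finish with Proposition~\ref{psiderivbd}. You also spell out the iteration to degree-$N$ monomials, which the proposition as worded restricts to $X\in\mathcal B$ (so $N=1$), but the degree-$N$ version is in fact what the paper silently uses in (\ref{Linftybd}), so this strengthening is welcome rather than off-track.
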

\begin{proof}
The first inequality follows from (\ref{rhofbd2}) using the bi-invariance of the Haar measure, and the second from proposition~\ref{psiderivbd}.
\end{proof}

\aspoint A result of Harish-Chandra \cite{hc} (see also \cite[p.~22]{bor}) asserts that for any $K$-finite automorphic form $\phi$, there exists a function $f\in C_c^\infty(G)$ such that $\phi$ is  in fact {\em equal} to  the convolution
\begin{equation}\label{convdef}
    (\phi\star f)(g) \ \ := \ \ \int_G \phi(gh\i)\,f(h)\,dh\,.
\end{equation}
The proof furthermore shows that $f$
satisfies the condition  $f(k^{-1}gk)=f(g)$ for all $k\in K$, and
can be taken to be  a linear combination of functions in an approximate identity sequence.  For $\psi_c$ of the form (\ref{psidef}), the $K$-twists defined in (\ref{fdef}) are approximate identity sequences.  This allows us to conclude the following
\begin{nprop}\label{HCprop}
    For any $K$-finite automorphic form $\phi$ there exists a finite number of positive real numbers $c_i>0$ and complex numbers $a_i$ such that
     \begin{equation}\label{refinement}
        \phi \ \ = \ \ \sum_i a_i \,(\phi\star f_{c_i})\,.
     \end{equation}
\end{nprop}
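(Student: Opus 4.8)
The plan is to derive the proposition by running the proof of the quoted theorem of Harish-Chandra with the specific approximate identity furnished by the functions $f_c$. Recall the shape of that proof: one fixes some approximate identity $\{\alpha_n\}_{n\ge1}\subset C_c^\infty(G)$ consisting of $K$-central functions, i.e., functions with $\alpha_n(k^{-1}gk)=\alpha_n(g)$ for all $k\in K$, so that $\phi\star\alpha_n\to\phi$ in $C^\infty(\Gamma\backslash G)$; one then shows that \emph{all} of the convolutions $\phi\star\alpha_n$ lie in a single finite-dimensional subspace $W\subset C^\infty(\Gamma\backslash G)$ which depends only on $\phi$ and not on the choice of $\{\alpha_n\}$. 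Granting this, the span of $\{\phi\star\alpha_n\}_{n\ge1}$, being a subspace of $W$, is finite-dimensional and hence closed, so its limit point $\phi$ already lies in it; that is, $\phi$ is a finite linear combination $\phi=\sum_i b_i\,(\phi\star\alpha_{n_i})$. The crucial feature is that any approximate identity of $K$-central functions may be used here.

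It therefore suffices to realize the $f_c$, after normalization, as such a sequence. Each $f_c$ in (\ref{fdef}) is $K$-central by the conjugation-invariance of Haar measure on $K$; it is nonnegative, since $\sigma\ge0$ in (\ref{f1choice}); and it is supported in a compact neighborhood of the identity that shrinks to $\{e\}$ as $c\to0^+$, since the neighborhood swept out by (\ref{exponentialcoordinatemap}) does. Hence the functions $\alpha_n:=f_{1/n}/\|f_{1/n}\|_1$ form a genuine approximate identity of $K$-central functions, and the previous paragraph applied to this sequence gives $\phi=\sum_i b_i\,(\phi\star\alpha_{n_i})$. Absorbing the positive constants $\|f_{1/n_i}\|_1$ into the coefficients yields $\phi=\sum_i a_i\,(\phi\star f_{c_i})$ with $c_i=1/n_i$, which is the assertion.

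It remains to recall why $W$ can be chosen finite-dimensional. For $K$-central $\alpha\in C_c^\infty(G)$, the convolution $\phi\star\alpha$ is again a smooth automorphic form on $\Gamma\backslash G$: it is left $\Gamma$-invariant and of moderate growth, and it is annihilated by the same finite-codimension ideal $J$ of the center $\mathcal Z$ of the universal enveloping algebra of $\mathfrak g$ that annihilates $\phi$, because elements of $\mathcal Z$ act through bi-invariant differential operators and hence commute with convolution. Moreover the identity $\delta(k)(\phi\star\alpha)=(\delta(k)\phi)\star\alpha$ — valid because $\alpha$ is $K$-central and Haar measure is conjugation-invariant — shows that the right $K$-translates of $\phi\star\alpha$ span a $K$-module quotient of the finite-dimensional $K$-module $\operatorname{span}\{\delta(k)\phi:k\in K\}$, so $\phi\star\alpha$ is $K$-finite with $K$-types lying in the fixed finite set $S$ of types occurring in that module. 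By Harish-Chandra's finiteness theorem \cite{hc}, the space $W$ of automorphic forms on $\Gamma\backslash G$ annihilated by $J$ and with $K$-types in $S$ is finite-dimensional, and it contains every such $\phi\star\alpha$ as well as $\phi$ itself. I expect this appeal to the finiteness theorem, together with the bound confining the $K$-types, to be the only substantive point; the remaining ingredients — the $K$-centrality and positivity of the $f_c$, the convergence of approximate-identity convolutions, and the bookkeeping of normalizing constants — are routine, and are exactly what underlies the form of Harish-Chandra's theorem recalled just before the statement.
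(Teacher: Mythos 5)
Your proof is correct and follows the same overall strategy as the paper's: realize the normalized $f_c$ as an approximate identity of $K$-central functions, place the convolutions $\phi\star f_c$ in a fixed finite-dimensional space, and conclude that $\phi$, being a limit of elements of a closed finite-dimensional subspace, is a finite linear combination.

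There is one point where your write-up is actually more careful than the paper's. The paper asserts that $\phi\star f_c$ lies in $V$, the span of the right $K$-translates of $\phi$ (finite-dimensional because $\phi$ is $K$-finite), and then works with the convolution operators $\{\star f_c\}$ as elements of $\End(V)$. You instead place all $\phi\star f_c$ in the larger space $W$ of automorphic forms annihilated by the same finite-codimension ideal $J\subset\mathcal Z$ and with $K$-types in the fixed finite set $S$, and cite Harish-Chandra's finiteness theorem for $\dim W<\infty$. This is the safer route: $K$-centrality of $f_c$ does guarantee that $\phi\star f_c$ keeps $K$-types in $S$, but it does not by itself force $\phi\star f_c$ to lie in the particular span of $K$-translates of $\phi$ (an isotypic piece can be strictly larger than the $K$-submodule generated by one vector). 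So the paper's claim that $\phi\star f_c\in V$ is tacitly leaning on the same finiteness theorem you invoke explicitly, and you have filled in that step. The remaining difference is purely cosmetic: the paper passes to operators and concludes the identity operator lies in $\operatorname{span}\{\star f_c\}\subset\End(V)$, whereas you conclude directly that $\phi$ lies in $\operatorname{span}\{\phi\star f_{c_i}\}\subset W$; the two formulations are equivalent.
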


\noindent
For completeness we give the proof, which is completely based on ideas of Borel and Harish-Chandra as in \emph{op. cit}.
\begin{proof}
As $c\rightarrow 0$ the functions $f_c$ in (\ref{fdef}) are smooth, nonnegative, and have support shrinking to zero.   Thus for any $\phi_1\in C_c^\infty(G)$  the convolutions $m_c\i (\phi_1 \star f_c)$ converge to  $\phi_1$ uniformly on compacta, where $m_c=\|f_c\|_1$.  At the same time, changing variables in (\ref{fdef}) demonstrates that  $f_c(k\i g k)=f_c(g)$, and so $\phi\star f_c$ lies in $V$ := the span of all $K$-translates of $\phi$, a finite dimensional space.    (This is the only point in this appendix where the $K$-finiteness property is used.)   The linear span  $W$ of the convolution operators $\{\star f_c|c>0\}$ is a subspace of $\Aut(V)$ that is closed under any of its equivalent topologies, being that both are finite dimensional.  The particular sequence of operators  $\{\star(m_c\i f_c)\}$ converges to the identity operator on $V$ under the topology of uniform convergence on compacta, and hence the identity operator is in $W$; that is, the identity operator on $W$ has the form $\star\sum_i a_i f_{c_i}$, which implies (\ref{refinement}).
\end{proof}

\aspoint \label{final:special} We shall now combine the results of the previous paragraphs.  According to (\ref{convdef}),
\begin{equation}\label{convderiv}
    \delta(X)(\phi\star f) \ \ = \ \ \phi \star \delta(X)f
\end{equation}
for any $X\in \frak U$, as can be seen by changing variables.  Thus the boundedness of cusp forms combined with estimate (\ref{rhofbd2}) shows that
\begin{equation}\label{Linftybd}
    \|\delta(X)(\phi \star f_c)\|_\infty \ \ \le \ \ \|\phi\|_\infty\, \|\delta(X)f_c\|_1 \ \ \le  \ \ (c'N)^{c'N}\,,
\end{equation}
  where $N$ is the degree of $X$ and $c'>0$ is a constant depending on $c$ and $G$.
Proposition~\ref{HCprop} shows that $\phi$ is a finite linear combination of convolutions   $\phi\star f_{c_i}$, and so we   conclude

\begin{nprop} \label{spfcor} Let $\phi$ be a $K$-finite cusp form on $G.$ There exists a positive constant
 $C >0$ depending only on $\phi$   such that for every $X \in \mf{U}$ of degree $N$ \be{Linftybd2}
   \|\delta(X)\phi \|_\infty \ \ \le \ \ (CN)^{CN}\,. \ee  \end{nprop}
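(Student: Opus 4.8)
The plan is to read this estimate off from Proposition~\ref{HCprop} together with the derivative bound $(\ref{Linftybd})$ for the convolutions $\phi\star f_c$.  First I would reduce to the case in which $X=X_1X_2\cdots X_N$ is a product of $N$ elements of the basis ${\mathcal B}$ of $\frak g$ from $(\ref{Bbasis})$, so that $\delta(X)=\delta(X_1)\circ\cdots\circ\delta(X_N)$ is precisely the type of operator for which the chain of estimates culminating in $(\ref{Linftybd})$ was set up; an arbitrary element of $\frak U$ of degree $N$ is expanded as a linear combination of such monomials, which introduces only a factor polynomial in $N$ once $\dim\frak g$ is fixed, and this factor will be absorbed into $(CN)^{CN}$.

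Next I would invoke Proposition~\ref{HCprop} to write $\phi=\sum_i a_i\,(\phi\star f_{c_i})$ for finitely many positive reals $c_i$ and complex scalars $a_i$, apply $\delta(X)$ term by term, and bound each summand using $(\ref{Linftybd})$: for every $i$ there is a constant $c_i'=c_i'(c_i,G)>0$ with
\[
    \|\delta(X)(\phi\star f_{c_i})\|_\infty \ \ \le \ \ \|\phi\|_\infty\,\|\delta(X)f_{c_i}\|_1 \ \ \le \ \ (c_i' N)^{c_i' N}\,,
\]
where $N=\deg X$.  Summing and applying the triangle inequality gives $\|\delta(X)\phi\|_\infty\le\sum_i|a_i|\,(c_i'N)^{c_i'N}$.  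Since there are only finitely many indices $i$, I would then pick a constant $C>0$ large enough to dominate $\max_i c_i'$, the scalars $|a_i|$, all implied constants, and the polynomial overhead from the first step; as the group $G$ is fixed throughout, this $C$ depends only on $\phi$, and one concludes $\|\delta(X)\phi\|_\infty\le(CN)^{CN}$.

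I do not expect any serious obstacle here: the analytic substance --- Lemma~\ref{onevar}, its propagation through the group in Proposition~\ref{psiderivbd} and to the $K$-twists $f_c$, and Harish-Chandra's representation of a $K$-finite automorphic form as a finite sum of convolutions in Proposition~\ref{HCprop} --- has already been assembled, and this last step is essentially bookkeeping.  Two points deserve a little care: (i) the constant $c'$ appearing in $(\ref{Linftybd})$ depends on the auxiliary bump-function scale $c$, which is harmless precisely because Proposition~\ref{HCprop} produces only finitely many values $c_i$; and (ii) the passage from basis monomials $X_1\cdots X_N$ to an arbitrary degree-$N$ element of $\frak U$, where one should observe that the combinatorial count of such monomials is polynomial in $N$ for fixed $\dim\frak g$ and is therefore swallowed by $(CN)^{CN}$ after enlarging $C$.
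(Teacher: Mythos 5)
Your proposal follows essentially the same route as the paper: pass $\delta(X)$ inside each convolution via $\delta(X)(\phi\star f_{c_i})=\phi\star\delta(X)f_{c_i}$, bound by $\|\phi\|_\infty\|\delta(X)f_{c_i}\|_1\le (c_i'N)^{c_i'N}$ as in (\ref{Linftybd}), and invoke Proposition~\ref{HCprop} to reduce to a finite sum so that $C$ can dominate the finitely many $c_i'$ and $|a_i|$. Your explicit reduction to monomials in the basis ${\mathcal B}$ is a sensible clarification that the paper leaves implicit; the only small inaccuracy is that the number of such monomials of degree $N$ grows like $(\dim\frak g)^N$ rather than polynomially in $N$, but this is still absorbed into $(CN)^{CN}$ after enlarging $C$, so the conclusion stands.
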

%

\subsection{Rapid decay with uniform constants} \label{uniform}

In this section we use  proposition~\ref{final:special}  to give a rapid decay estimate with uniform constants.  The methods here are fairly standard (see \cite[\S2.10]{mw}, for example), though it is unfortunately necessary to reproduce them  in order to demonstrate how the constant factors in (\ref{Linftybd2}) propagate.

\aspoint Recall the conventions of \S \ref{sd1} and \S \ref{sd2},
where we  fixed an Iwasawa decomposition $G= U A K$ and a projection $\iw_A: G \rr A.$
 Furthermore, in (\ref{atolambdef}) we defined a character  $A \rr \C^*$ which sends $a \mapsto a^{\mu}$
  for each linear functional $\mu: \mf{h}=\text{Lie}(A) \rr \C$.

\begin{nthm} \label{thm:uniform} Let $\phi$ be a $K$-finite cusp form on $\Gamma\backslash G.$ Fix a
 Siegel set $\mf{S}_t$ for $G$. Then there exists a constant $C > 0$ which depends only on $\phi$ and the parameter $t$ of the Siegel set $\mf{S}_t$ such that
 \be{thm:uf}  \phi(g )  \ \ \leq \ \  (CN)^{CN}\  \iw_A(g)^{-N \rho}     \ \ \ \  \text{  for all } g \in \mf{S}_t
  \  \, \text{and} \, \ N\ge 1\,. \ee
  \end{nthm}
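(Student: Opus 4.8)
The strategy is to run the classical Fourier-expansion proof of rapid decay of cusp forms on a Siegel set (cf.~\cite[\S 2.10]{mw}), keeping explicit track of how the superexponential constant of Proposition~\ref{final:special} propagates.  Write $g = uak\in\mf{S}_t$ with $u\in U_{\mc{D}}$, $a = \iw_A(g)\in A_t$, and $k\in K$, so the goal is $\phi(g)\le(CN)^{CN}\,a^{-N\rho}$.

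\emph{Step 1: decay in a single simple-root direction.}  Fix a simple root $\alpha_i$, and let $N_i\subset U$ be the product of the root groups $U_\beta$ with $\langle\beta,\omega_i^\vee\rangle\ge 1$; this is the unipotent radical of the standard maximal parabolic whose Levi omits $\alpha_i$, so cuspidality of $\phi$ gives $\phi_{N_i}=0$.  Running down a central filtration of $N_i$ (equivalently, the $\alpha_i$-grading of the roots), whose successive quotients are $A$-stable abelian groups in which every $U_\beta$ still satisfies $\langle\beta,\omega_i^\vee\rangle\ge 1$, one Fourier-expands $n\mapsto\phi(ng)$ level by level; cuspidality kills the trivial mode at the top and an induction on the nilpotency class peels off the center at each stage, leaving a finite-layer expansion $\phi(g)=\sum_{\xi\ne 1}\widehat\phi_\xi(g)$ over non-trivial characters $\xi$ of compact abelian quotients.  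Each such $\xi$ is non-degenerate on some $U_\beta$, say $\xi(\exp(sX_\beta))=e^{2\pi i\lambda_\xi s}$ with $\lambda_\xi$ running over the non-zero points of a fixed lattice.  Commuting a right-invariant derivative $\delta(X_\beta)$ rightward past $u$, $a$, $k$ — using $\Ad(a^{-1})X_\beta = a^{-\beta}X_\beta$ together with the boundedness of $\Ad(u^{-1})$ and $\Ad(k^{-1})$ on the compact ranges — and integrating by parts $M$ times yields
\[
  |\widehat\phi_\xi(g)|\ \ \le\ \ \frac{1}{(2\pi|\lambda_\xi|)^{M}}\,\bigl\|\delta(Y)^{M}\phi\bigr\|_\infty\,\cdot\,\mathrm{vol}\,,
\]
where $Y=\Ad((uak)^{-1})X_\beta$ is a combination of basis vectors of $\mf{g}$ with coefficients bounded by $C_0\,a^{-\alpha_i}$, the constant $C_0$ depending only on $U_{\mc{D}}$, $t$, and $G$ (here $\langle\beta,\omega_i^\vee\rangle\ge1$ forces $a^{-\beta}\le C(t)\,a^{-\alpha_i}$ on $A_t$).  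Expanding $\delta(Y)^{M}$ produces at most $|\mc{B}|^{M}$ terms $\delta(Z_1\cdots Z_M)$ of degree $\le M$, each bounded by $(CM)^{CM}$ via Proposition~\ref{final:special}; summing over $\xi$ (the series $\sum_{\xi\ne1}|\lambda_\xi|^{-M}$ converges to an $O(1)$ quantity once $M$ exceeds the fixed ranks of the character lattices) gives
\[
  \phi(g)\ \ \le\ \ (C_iM)^{C_iM}\,a^{-M\alpha_i}\qquad(M\ge M_0)\,,
\]
with $C_i,M_0$ depending only on $G$, $\phi$, $t$; the nuisance factors $C_0^M$, $C(t)^M$, $|\mc{B}|^M$ are harmless since $(CM)^{CM}$ dominates any $C^M$.

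\emph{Step 2: assembling $\rho$, and small $N$.}  Write $\rho=\sum_{i=1}^{\ell}k_i\alpha_i$ with each $k_i>0$ (Lemma~\ref{ragh} applied to each $\omega_j$).  Multiplying the $\ell$ estimates of Step~1 taken with $M=M_i:=\lceil\ell N k_i\rceil$ and extracting an $\ell$-th root,
\[
  \phi(g)\ \ \le\ \ \Bigl(\textstyle\prod_{i=1}^{\ell}(C_iM_i)^{C_iM_i}\Bigr)^{1/\ell}\,\Bigl(\textstyle\prod_{i=1}^{\ell}a^{-M_i\alpha_i}\Bigr)^{1/\ell}\,.
\]
Since $\sum_iM_i\alpha_i=\ell N\rho+\sum_i\delta_i\alpha_i$ with $\delta_i\in[0,1)$ and $a^{\alpha_i}>t$, the second factor is $\le c_1^{1/\ell}\,a^{-N\rho}$ for a fixed $c_1$; since $\sum_ik_i$ is a constant, the first factor is $\le(C'N)^{C'N}$.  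This proves the theorem for $N\ge N_1$ (where $\lceil\ell N_1k_i\rceil\ge M_0$ for all $i$).  For $1\le N<N_1$ use $a^{\rho}=\prod_i(a^{\alpha_i})^{k_i}\ge t^{\sum k_i}>0$ on $A_t$, so $a^{-N_1\rho}\le c_t^{-(N_1-N)}\,a^{-N\rho}$; combining with the already-established case $N=N_1$ (a fixed exponent) and enlarging the constant gives the bound for all $N\ge1$.

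The main obstacle is Step~1: organizing the layer-by-layer expansion along the (generally non-abelian) $N_i$ and checking that none of the accompanying bookkeeping — the count of commutator terms, the root-height factors $C(t)^M$, the total degree of the operators $\delta(Z_1\cdots Z_M)$, and the character-lattice sum $\sum_\xi|\lambda_\xi|^{-M}$ — contributes more than an ordinary exponential $C^M$, all of which are swamped by the $(CM)^{CM}$ of Proposition~\ref{final:special}.  This is exactly where the explicit $N$-dependence of that proposition (rather than a polynomial or constant bound, as in the usual rapid-decay statements — cf.~Remark~\ref{remark731}) is indispensable.
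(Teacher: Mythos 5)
Your proof is correct and reaches the conclusion by a route that is closely related to, but organized differently from, the paper's. Both arguments follow the standard rapid-decay scheme for cusp forms along a Siegel set, with the decisive extra ingredient being the superexponential sup-norm bound $\|\delta(X)\phi\|_\infty\le(CN)^{CN}$ of Proposition~\ref{final:special}, and both combine the single-simple-root estimates into a $\rho$-exponent via Lemma~\ref{ragh}. Where you differ is in how the decay in a single direction $\alpha_i$ is extracted: you Fourier-expand $\phi$ along the lower central series of the unipotent radical $N_i$, integrate by parts $M$ times against each non-trivial character $\xi$, and obtain $a^{-M\alpha_i}$ in a single pass, at the cost of having to control the character-lattice sum $\sum_{\xi\ne 1}|\lambda_\xi|^{-M}$ (which, as you note, converges once $M$ exceeds the rank). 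The paper instead filters $\mathfrak{n}_P$ one root space at a time by height, telescopes $\phi=\sum_i(\phi_{i-1}-\phi_i)$ against $\phi_p=0$, and bounds each difference by a single left-invariant derivative --- yielding one factor of $a^{-\alpha}$ per pass and obtaining the higher power only by iterating the entire procedure. Your Step~2 (taking $M_i\approx\ell N k_i$, multiplying the $\ell$ estimates, and extracting an $\ell$-th root, with the small-$N$ case handled by the boundedness of $a^{-\rho}$ on $A_t$) makes explicit the assembly that the paper compresses into the phrase ``iterating this procedure for various simple roots $\alpha$.'' Your observation that the ancillary exponential factors $C_0^M$, $C(t)^M$, $|\mathcal{B}|^M$ are swamped by $(CM)^{CM}$ is exactly the point of Remark~\ref{remark731} and is handled correctly.
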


The remainder of this section will be devoted to the proof of this theorem.

\aspoint Let $\alpha \in \Pi$ be a simple root and $P$ the corresponding
 standard maximal parabolic subgroup, with unipotent radical $U_P$ and
 corresponding Lie algebra $\mf{n}_P.$ Let $\Delta_{+, \alpha}$ denote
 the set of positive roots \be{delta:alpha} \{ \gamma \,\in\,\Delta_+ \ |\  \gamma  = \sum_{\beta \in \Pi} m(\beta) \beta \text{ with } m(\alpha) \neq 0 \}\,,  \ee
 which are precisely the roots $\g$ whose Chevalley basis root vector
  $X_\g$ lies in ${\frak n}_P:=\text{Lie}(U_P)$.
 Choose an ordering of the roots of $\Delta_{+,\alpha}$ such that if
  $\beta, \gamma, \beta+ \gamma \in \Delta$ then $ \beta +
  \gamma > \beta$ and $\beta + \gamma > \gamma$  : \[ \beta_1 \,  > \,  \beta_2 \,  >
   \, \cdots  \, > \,  \beta_p \, . \] For $i=1,\ldots,p$ let $X_i=X_{\b_i}$ be the Chevalley
    basis root vector  corresponding to the root $\beta_i.$
Corresponding to the set $\{ \beta_1, \ldots, \beta_i \}$ for $0 \leq i \leq p,$ we let
 $\mf{n}_i \subset \mf{n}_P$ and $U_i \subset U_P$ denote the corresponding nilpotent
 subalgebra and normal unipotent subgroups of ${\frak n}_P$ and $U_P$, respectively;
 that is, ${\frak n}_i$ is the real span of $\{X_1,\ldots, X_i\}$ and $U_i$ is its exponential. We then have the following ascending chains of Lie algebras and unipotent groups:
\be{nilp:filtration}
\{0\} \   =  \  \mf{n}_0 \subset \mf{n}_1 \subset &\cdots& \subset \mf{n}_p= \mf{n}_P \\
\{0\} \  = \   U_0 \subset U_1 \subset &\cdots& \subset U_p= U_P. \ee
For   $1 \le  i \leq p$  the one parameter subgroups  $\chi_{\b_i}(s)=e^{sX_i}$ induce isomorphisms
$U_{i-1}\backslash U_i\cong \R$;
letting $\Gamma_{\infty} = \Gamma \cap U$, $\Gamma_i = \Gamma \cap U_i$,
 and rescaling the $X_i$ if necessary, they furthermore induce isomorphisms  \be{R/Z} \chi_{\beta_i} \, : \,
    \zee \backslash \R \ \  \cong  \ \ ( U_{i-1} \Gamma_i)\backslash U_i   \ee
that will be used to parametrize Fourier series below.  Since replacing $X_i$ by a positive multiple
will make no difference in the rest of this appendix,
we shall assume this rescaling has been performed.

\aspoint For any function $\Psi\in L^1(\G\backslash G)$ we can define projections
\be{psiidef}
\Psi_i(g) \ \ := \ \  \int_{\Gamma_i \backslash U_i } \Psi(u_i g)\, du_i \ \, , \ \ \ 0 \,\le\, i \,\le\,p\,, \ee  where $du_i$ is the Haar measure on $U_i$ normalized to give the quotient $\Gamma_i\backslash U_i$ measure one.
These Haar measures can be written as products of Haar measures along the one-parameter subgroups given by the characters $\chi_{\beta_j}$ for $j\le i$.   Using (\ref{R/Z}) we can write  \[ \Psi_i(g)  \ \ = \ \  \int_{ ( U_{i-1} \Gamma_i )\backslash U_i } \Psi_{i-1}(u g) \,du \ \ = \ \
\int_{\Z\backslash \R} \Psi_{i-1}(\chi_{\b_i}(s)g)\,ds
   \]
   and hence
\be{phi:fourier}
\aligned \Psi_{i}(g) \, - \,  \Psi_{i-1}(g)  \ \ &  = \ \
 \int_{0}^1\left[\,\Psi_{i-1}(\chi_{\beta_i}(s)g) \ - \ \Psi_{i-1}(\chi_{\beta_i}(0)g)\,\right]\,ds\\
 & = \ \
 \, \int_{0}^1
  \int_0^s \(\smallf{d}{dr}\Psi_{i-1}(\chi_{\beta_i}(r)g)\)
  \,dr
  \,ds\,.
 \endaligned
 \ee
 Thus
 \begin{equation}\label{absolutevaluebd}
    |\Psi_{i}(g)\,-\,\Psi_{i-1}(g)| \ \ \le \ \ \max_{0\, \le \,  r \, \le \,  1} \left|\smallf{d}{dr}\Psi_{i-1}(\chi_{\beta_i}(r)g)
    \right|
 \end{equation}
 is bounded in terms of derivatives of $\Psi_{i-1}$ under {\em left} translation by the one-parameter subgroup generated by $X_i$.

 We next describe these left derivatives in terms of the right action (\ref{rhoX}):
 \begin{equation}\label{lefttoright}
    \smallf{d}{dr}\Psi_{i-1}(\chi_{\beta_i}(r)g) \ \ = \ \
   \(\delta((\Ad{g}^{-1})X_i )\Psi_{i-1}\)(\chi_{\beta_i}(r)g)\,,
 \end{equation}
 which can be seen by passing the Lie algebra derivative across the
 argument $\chi_{\beta_i}(r)g$ of $\Psi_{i-1}$.  Suppose now that
 $g$ has a factorization $g=uak$, in which  $u$ and $k$ range over fixed compact sets;
 this description in particular applies to elements of Siegel sets.  Because of our
 assumption on the ordering $\,>\,$, $\Ad(u^{-1})X_{\beta_i}$ can be written as a
  linear combination of the root vectors $\{X_{\beta_1},\ldots,X_{\beta_i}\}$ with
  bounded coefficients; the precise bound depends on the size of the compact set $u$ ranges over,
  and in particular depends only on $\G$ in the circumstance that $g\in \mf{S}_t$.
   For each $a\in A$, $\Ad(a^{-1})X_{\beta_j}=a^{-\beta_j}X_{\beta_j}$; because of
   (\ref{delta:alpha}) and the definition of $\mf{S}_t$, the factor $a^{-\beta_j}\ll a^{-\a}$
   with an implied constant depending only on $t$ and $G$.  Finally, $\Ad(k^{-1})$ maps any fixed element
    of $\frak g$ to a bounded set.  Putting together these statements about $\Ad(u^{-1})$, $\Ad(a^{-1})$,
    and $\Ad(k^{-1})$, we see that $\Ad(g^{-1})X_i=\Ad(k^{-1})\Ad(a^{-1})\Ad(u^{-1})X_i$ is a linear combination of elements of
    (\ref{Bbasis}) with coefficients bounded by $a^{-\a}$ times a constant that depends only on the Siegel
     set $\mf{S}_t$.
  Combining (\ref{absolutevaluebd}), (\ref{lefttoright}), the sup-norm inequality $\|\d(X)\Psi_i\|_\infty\le\|\d(X)\Psi\|_\infty$, and this expansion of
   $\Ad(g^{-1})X_i$, we see that
  \begin{equation}\label{pretelescope}
    \left| \Psi_{i-1}(g) \,-\,\Psi_i(g)\right| \ \ \ll \ \ a^{-\a} \,
    \max_{X\,\in\,{\mathcal B}}\|\d(X)\Psi_i\|_\infty \ \ \le \ \ a^{-\a} \,
    \max_{X\,\in\,{\mathcal B}}\|\d(X)\Psi\|_\infty  \,,
  \end{equation}
  with an implied constant that depends only on ${\frak S}_t$.

 Specialize $\Psi$ to be the cusp form $\phi$, so that $\Psi_p=\phi_p=0$. By applying the estimate (\ref{pretelescope})
 to the telescoping sum
\begin{equation}\label{telescope}
  \phi(g)  \ \ = \ \   \sum_{i=1}^p \(\phi_{i-1}(g) \, -\, \phi_i (g)\)\,,
\end{equation}
we conclude that $\phi(g)$ is bounded by the product of  $a^{-\a}$,
the sup norms $\|\d(X)\phi\|_{\infty}$ for any $X$ in (\ref{Bbasis}), and a constant that depends only on  $\mf{S}_t$.
Theorem~\ref{thm:uniform} then follows from iterating this procedure for various simple roots $\a$
 and applying the estimate in proposition~\ref{final:special}.%

Recall that  Theorem~\ref{decay:body} is deduced from Theorem~\ref{thm:uniform} immediately after its
 statement on page~\pageref{mineq}.

\subsection{Exponential decay} \label{expdecay}

\begin{nlem} Let $F:\R_{>0}\rightarrow \C$ be a bounded function for which there exists   constants  $c>0$ and $C>0$ such that
\begin{equation}\label{exponentiallemma1}
    F(y) \ \ \le \ \ c\,y^{-N}\,(CN)^{CN} \ \ \ \text{for all} \ \, N\,\ge\,1\,.
\end{equation}
Then there exists   constants $d>0$ and $D>0$ such that
\begin{equation}\label{exponentiallemma2}
    F(y) \ \ \le \ \ D  \, \exp(-  y^{d})\,.
\end{equation}
\end{nlem}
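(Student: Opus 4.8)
The plan is to optimize over $N$ in the hypothesis (\ref{exponentiallemma1}), treating $N$ as a continuous variable and then rounding to the nearest integer. First I would observe that for fixed $y > 0$, the quantity $y^{-N}(CN)^{CN}$, as a function of $N \in \R_{\ge 1}$, is minimized where its logarithm $g(N) := -N\ln y + CN\ln(CN)$ has vanishing derivative. Computing $g'(N) = -\ln y + C\ln(CN) + C$, the critical point occurs at $CN = e^{-1}\,y^{1/C}/C$, i.e. at $N = N_0 := (eC^2)^{-1}\,y^{1/C}$. Substituting back gives $g(N_0) = -C N_0 = -(eC^2)^{-1}\,y^{1/C}$, so heuristically $F(y) \le c\,\exp\bigl(-(eC^2)^{-1}\,y^{1/C}\bigr)$, which is of the desired form with $d = 1/C$.

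The one technical wrinkle is the constraint $N \ge 1$: the optimal $N_0$ is only $\ge 1$ once $y$ is large enough (say $y \ge y_1$ for a suitable threshold depending on $c, C$), and we must also round $N_0$ to an integer. For $y \ge y_1$ I would take $N = \lfloor N_0 \rfloor$ (or $\lceil N_0\rceil$, whichever keeps $N\ge 1$); since $g$ is convex with minimum at $N_0$, the value $g(N)$ at the rounded point exceeds $g(N_0)$ by at most $O(\ln N_0) = O(\ln y)$, which is absorbed into a slightly smaller constant in the exponent — e.g. replacing $(eC^2)^{-1}$ by $(2eC^2)^{-1}$ and shrinking $d$ marginally if desired, or more simply noting $\exp(O(\ln y) - (eC^2)^{-1}y^{1/C}) \le D'\exp(-(2eC^2)^{-1}y^{1/C})$ for all $y \ge 1$. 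For the remaining range $0 < y < y_1$, we simply use that $F$ is bounded, say $|F(y)| \le B$, and choose $D$ large enough that $D\exp(-y^d) \ge D\exp(-y_1^d) \ge B$ on that compact-away-from-zero interval; this fixes $D$. Setting $d := (2C)^{-1}$ (or any value $\le 1/C$ after the rounding loss is accounted for) and $D$ as the maximum of the two constants produced, the bound (\ref{exponentiallemma2}) holds for all $y > 0$.

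The main obstacle — really the only point requiring care — is bookkeeping the passage from the continuous optimizer to an integer $N \ge 1$ and verifying that the polynomial-in-$\ln y$ error incurred is genuinely dominated by the main exponential term $\exp(-(\mathrm{const})\,y^{1/C})$ uniformly. This is elementary one-variable calculus but must be written so that the final constants $d, D$ are seen to depend only on the input constants $c, C$ and the sup-norm bound $B$ of $F$. No deep input is needed; the lemma is purely a Laplace-method/saddle-point estimate in disguise, exactly mirroring the relationship in the introduction between the $(CN)^{CN}$-type rapid decay and honest exponential decay.
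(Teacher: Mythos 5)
Your proposal is correct but proves the lemma by a genuinely different route than the paper. The paper's proof does not optimize over $N$ at all: instead, for small enough $d>0$ (any $d$ with $Cd<1$ works) it feeds the integer parameter $N=\lfloor dn\rfloor$ into the hypothesis, observes that $c\,(Cdn)^{Cdn}\ll n!/n^2$, and then \emph{sums} the resulting bound on $y^{dn-1}F(y)$ against $1/n!$; since $\sum_n y^{dn}/n!=\exp(y^d)$, this directly shows $y^{-1}\exp(y^d)F(y)$ is bounded on $y>1$, and the boundedness of $F$ handles the range $y\le 1$. Your argument, by contrast, is the explicit Laplace/saddle-point version: locate the minimizer $N_0$ of $\ln\bigl(y^{-N}(CN)^{CN}\bigr)$, plug back in, and treat the integer rounding and the small-$y$ regime as separate cases. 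The two give the same qualitative exponent $d<1/C$. The trade-offs are minor: the paper's summation trick is shorter and avoids all the rounding/threshold bookkeeping you flag as the one delicate point, while your explicit optimization makes the value of $g(N_0)=-CN_0$ and hence the achievable $d$ transparent. One small correction: from $C\ln(CN_0)=\ln y - C$ one gets $CN_0 = e^{-1}y^{1/C}$ and hence $N_0 = (eC)^{-1}y^{1/C}$, $g(N_0)=-e^{-1}y^{1/C}$ (not $(eC^2)^{-1}$ in either place); this is a constant slip that does not affect the conclusion. Also, your $O(\ln N_0)$ rounding error is pessimistic — since $g'(N_0)=0$ and $g''(N)=C/N$, the loss from moving to $\lceil N_0\rceil$ is actually $O(C/N_0)=o(1)$ — but the looser bound you state is still dominated by the main term, so the argument stands.
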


The converse is of course elementary.
Theorem~\ref{thm:uniform} implies Theorem~\ref{mainthm}
as a consequence of this lemma, and also vice-versa (though with different constants).

\begin{proof}
By adjusting the constants $d$ and $D$ it suffices to shower the weaker bound $F(y)\le D y e^{-y^d}$ for $y>1$.
If  $y>1$   the assumption (\ref{exponentiallemma1}) implies
\begin{equation}\label{exponentiallemma4}
    y^{dn-1}\,F(y) \ \ \le \ \   y^{\lfloor dn\rfloor}\,F(y) \ \ \le \ \
     c (C\lfloor dn\rfloor)^{C\lfloor dn\rfloor} \ \ \le \ \ c(Cdn)^{Cdn}
\end{equation}
for any $d>0$ and $n\in \Z_{>0}$.
For $d$ sufficiently small the righthand side is less than a constant times
$(n!)/n^2$, so summing over $n$ gives the boundedness of
\begin{equation}\label{exponentiallemma3}
   y^{-1}\,\exp( y^{d})\,F(y) \ \ = \ \ \sum_{n\,=\,0}^\infty \smallf{1}{n!}\,y^{dn-1}\,F(y)
\end{equation}
in the range $y>1$, as was to be shown.
\end{proof}

\subsection{Some rapidly, but not exponentially, decaying smooth cusp forms}  \label{counterexample}

In this section we give an example of a non-$K$-finite yet smooth
 cusp form which does not satisfy the exponential-type decay statement
present in Theorem~\ref{mainthm}.  It is a theorem of \cite{rapiddecay} that all smooth cusp forms have
   rapid decay, but the proof does not
   give the uniformity of constants  present in the stronger statement
    (\ref{thm:uf}) from Theorem~\ref{thm:uniform}.  The example here shows that not only is this uniformity
     impossible, but furthermore that its failure is very common:~this is because we show that the decay
     is governed by an arbitrarily
    chosen Schwartz function (relatively few of which decay faster than an exponential).

We give an example for the group $SL(2,\R)$, though the phenomenon is certainly much more general.
Let $\pi$ be any cuspidal automorphic representation for $SL(2,\Z)\backslash SL(2,\R)$.
 We will demonstrate it has non-$K$-finite smooth vectors which violate (\ref{mainestimate}).
   To do this it is convenient to use the notion of {\em automorphic distribution} as in  \cite{microsoft}.
    The representation $\pi$ has an automorphic distribution $\tau$ which embeds smooth vectors $v$ for some principal series representation to automorphic forms $\phi$:
\begin{equation}\label{tauembed}
    \phi(n_xa_y) \ \ = \ \ \langle \tau,\pi(n_xa_y)v\rangle\,, \  \ \text{where} \ n_x=\ttwo1x01 \ \text{and} \ a_y=\ttwo{y^{1/2}}{0}{0}{y^{-1/2}}.
\end{equation}
 Such an embedding exists even when $\pi$ is itself not a principal series representation, e.g., a discrete series representation.
In the line model, the Schwartz space  is an embedded subspace of smooth vectors. If $v$ corresponds to a Schwartz function $f(u)$, then $\pi(n_x)v$ corresponds to $f(u-x)$ and  $\pi(a_y)v$ corresponds
 $f(\f uy)|y|^\mu\sgn(a)^\d$, where $\mu\in \C$ and $\d\in \Z/2\Z$ depend  on the archimedean type of the representation $\pi$.
The distribution $\tau$ is periodic, hence  tempered (meaning that it can be integrated against arbitrary Schwartz functions).  Its restriction to the real line has a Fourier series,
\begin{equation}\label{tauasfourierseries}
    \tau(u) \ \ = \ \ \sum_{n\,\neq\,0} c_n \, e^{2\pi i n u}\,,
\end{equation}
with coefficients that grow at most polynomially in $n$
(see \cite{microsoft}).
Thus
\begin{equation}\label{phiatauvfourier}
\aligned
    \phi(n_xa_y) \ \ & = \ \ |y|^\mu \,\sgn(y)^\d \, \int_{\R} \ \sum_{n\,\neq\,0} c_n \, e^{2\pi i n u} \,f(\smallf{u-x}y)\,du \\
    & = \ \ |y|^{\mu+1} \,\sgn(y)^\d \, \sum_{n\,\neq\,0} c_n \,e^{2\pi i n x}\,\int_\R e^{2\pi i n y u }\,f(u)\,du \\
    & = \ \
    |y|^{\mu+1} \,\sgn(y)^\d \, \sum_{n\,\neq\,0} c_n \,e^{2\pi i n x}\,\widehat{f}(-ny )\,.
\endaligned
\end{equation}
The estimate (\ref{mainestimate}) implies exponential decay (of the same caliber)
 for each Fourier coefficient of $\phi$, in particular the first coefficient.  In particular,
 it implies $\widehat{f}(-y)$ decays faster than the exponential of some fractional power of $y$.
  However, it is an arbitrary Schwartz function, and not all Schwartz functions have that decay
   property, e.g. $y\mapsto e^{-(\ln(y^2+1))^2}$.  We conclude that the exponential decay estimate (\ref{mainestimate}) does not
    hold for all smooth cusp forms.

\subsection{Exponential decay estimates in string theory}\label{stringtheory}

A number of interesting quantities in string theory are, by their very definition, automorphic functions.
This is because  split exceptional real groups in the $E_n$ series  arise as duality groups in toroidal
compactifications in Cremmer-Julia supergravity, and as a consequence the coupling constants of the theory
lie in a symmetric space $G/K$.  In type IIB string theory, $S$-, $T$-, and $U$-dualities force invariance
 under a large discrete subgroup $\G\subset G$ \cite{Hull:1994ys}. The purpose of this section is to discuss the exact exponent
  $d$ in the estimate (\ref{mainestimate}), and its string theory interpretation in various situations.  Much of the
  discussion here is an outgrowth of the second-named author's joint work with Michael Green and Pierre Vanhove \cite{GMV}, who
  we thank for their insightful comments on the questions posed here.  More information about these questions can be found
  in \cites{Bao:2009fg,GMV,gmrv,Pioline:2009qt,Persson:2011xi} and in related references therein.

\aspoint{{\bf Decay estimates for Fourier coefficients}}\label{decayofcoeffs}

In various particular problems supersymmetry provides extra information such as differential equations (e.g., for  the Laplace-Beltrami operator), suggesting that the automorphic functions are automorphic forms in these situations.  In addition, at times supersymmetry predicts the asymptotic behaviour of various Fourier coefficients of these automorphic forms.

Although the exponential decay estimate (\ref{mainestimate}) is stated for cuspidal automorphic forms $\phi$,
 it of course applies to any Fourier coefficient defined by integration of a character over a unipotent subgroup $U'\subset U$:
\begin{equation}\label{phichi}
    \phi_\chi(g) \ \ := \ \ \int_{(\G\cap U')\backslash U'}\phi(ug)\,\chi(u)^{-1}\,du\,,
\end{equation}
where $du$ is the Haar measure normalized to give the quotient $(\G\cap U')\backslash U'$ volume 1 and $\chi$ is a character of $U$ which is trivial on $\G\cap U'$.  Indeed, since the integration is over a compact quotient $\phi_\chi$ also obeys the estimate (\ref{mainestimate}).  Thus Theorem~\ref{mainthm} gives the exponential decay of these Fourier coefficients.
When $\phi$ is not cuspidal similar decay is expected for nontrivial characters $\chi$, since an automorphic form minus its constant terms
has rapid decay (this was essentially rederived in the analysis of (\ref{telescope}) above).

\aspoint{{\bf Some precise questions on the  exponential decay for maximal parabolics}}

Let $\a$ denote a positive simple root and $P=P_\a\supset B$ denote its associated standard maximal parabolic subgroup of $G$, which has the factorization $P_\a=A_\a M_\a U_\a$ in terms of its unipotent radical $U_\a$ and the center $A_\a$ of its reductive part $A_\a M_\a$.  For $a\in A_\a$ recall that  $a^\a \in \R$ is defined by the formula
$\Ad(a)X_\a = a^{\a} X_\a,$
where $X_\a$ is a root vector for the simple root $\a$.  We shall use this coordinate
to describe limits along the group $A_\a$, which often has an interpretation in string theory, e.g., of a compactifying radius or string coupling constant.  We do not assume that $\phi$ is a cusp form but we do assume that $\chi$ is nontrivial in order to exclude constant terms.

{\bf Question A:} Is it always that case that there exists constants $c_1=c_1(g,\phi,\chi)$, $c_2=c_2(g,\phi,\chi)$, and $d=d(\phi,\chi)$ such that
\begin{equation}\label{questionA}
    \phi_\chi(ag) \ \ \simeq \ \ c_1(g,\phi,\chi)\,\exp\(-c_2(g,\phi,\chi)\,(a^{\a})^{d(\phi,\chi)}\)
\end{equation}
for $g$ fixed, as $a^\a\rightarrow\infty$?  Here the symbol $\simeq$ is used to indicate asymptotic equality up to polynomials in the $a^\a$.   (Such factors do arise, e.g., in
\cite[(H.31)]{GMV}).

\vspace{.5cm}

{\bf Question B:}  Assuming an affirmative answer to Question A, does $d(\phi,
\chi)$ depend nontrivially on $\chi$, or is instead always 1 (as it is in all currently known cases)?

\vspace{.5cm}

\noindent
Let us now consider the lower central series
\begin{equation}\label{Ualphaandcommutators}
    U_\alpha \ \ = \ \ U^{(1)} \ \ \supset \ \ U^{(2)} \ \ \supset \ \ U^{(3)} \ \ \supset \ \ \cdots \ \ \supset \ \ U^{(\ell)} \ \ = \ \ \{e\}\,,
\end{equation}
where $U^{({k+1})}=[U^{(k)},U_\alpha]$.

\vspace{.5cm}

{\bf Question C:} Fix $1\le k \le \ell$ and a nontrivial character $\chi$ of $U_k$ which is  trivial on $\G\cap U^{(k)}.$  Assuming an affirmative answer to Question $A$, is $d(\phi,\chi)=k$?

For example, when $U_\a$ is a Heisenberg group Question C asks whether or not nontrivial Fourier coefficients along the one-dimensional center of $U_\a$ decay significantly faster than  nontrivial  Fourier coefficients on the full group $U_\a$ do.  We present some motivation for this question below.

\aspoint{{\bf Examples from string theory}}

We shall now describe the origin of question C in string theory.  Let $G$ denote the
split real form of the simply connected Chevalley group of type $E_n$, where $n=6$, $7$, or $8$.
 Let $\a$ denote the last positive simple root of $G$ in the usual Bourbaki numbering.  Thus $M_\a$ is a simply connected
 Chevalley group of type $Spin(5,5)$, $E_6$, or $E_7$ in these three cases.

 For particular Eisenstein series (which are not cuspidal, but for which our arguments
  nevertheless give  the same bounds on Fourier coefficients) the nonvanishing Fourier coefficients
   are expected to obey precise asymptotics as $a^\a$ goes to infinity.  This is because $a^{\a}$
    corresponds to the  decompactifying radius in the toroidal compactifaction.  When such a radius
    tends to infinity, the compactified theory resembles that of a higher dimensional theory compactified
     on a lower-dimensional torus, and has duality group $M_\a$. Precise  statements often give exponential
      decay in terms of an effective action, which in our context implies that $d(\phi,\chi)=1$ when $U_\a$
      is abelian and $\a$ comes from one
      the three terminal nodes of $G$'s Dynkin diagram.  This motivates Question B  and the $k=1$ case of Question C.

 The string theory interpretation is apparently different if $U_\a$ is nonabelian, for example a
  Heisenberg group.  In this case there is an additional charge to consider, which is instead
   consistent with $d(\phi,\chi)=2$ for the $k=2$ case of Question C.  Evidence for this is presented in \cite{Pioline:2009qt,Bao:2009fg,Persson:2011xi}.

 It should be stressed that string theory does not provide any additional insight into the decay rate of general automorphic forms, other than pointing to subtleties previously unrecognized in particular examples.

\begin{bibsection}
\begin{biblist}

\bib{Bao:2009fg}{article}{
  author={Bao, L.},author= {Kleinschmidt, A.}, author={Nilsson, B.~E.~W.}, author={Persson, D.},author={Pioline, B.},
  title={  Instanton Corrections to the Universal Hypermultiplet and Automorphic Forms on SU(2,1),},
  journal={
  Commun.\ Num.\ Theor.\ Phys.},volume={  4 },year={2010},pages={ 187-266},note={
  arXiv:0909.4299 [hep-th]},}

\bib{bor:groupes}{book}{
   author={Borel, Armand},
   title={Introduction aux groupes arithm\'etiques},
   language={French},
   series={Publications de l'Institut de Math\'ematique de l'Universit\'e de
   Strasbourg, XV. Actualit\'es Scientifiques et Industrielles, No. 1341},
   publisher={Hermann},
   place={Paris},
   date={1969},
   pages={125},
   review={\MR{0244260 (39 \#5577)}},
}

\bib{bor}{book}{
   author={Borel, Armand},
   title={Automorphic forms on ${\rm SL}_2({\bf R})$},
   series={Cambridge Tracts in Mathematics},
   volume={130},
   publisher={Cambridge University Press},
   place={Cambridge},
   date={1997},
   pages={x+192},
   isbn={0-521-58049-8},
   review={\MR{1482800 (98j:11028)}},
   doi={10.1017/CBO9780511896064},
}

\bib{bk:ad}{article}{
      author={Braverman, A.},
      author={Kazhdan, D.},
       title={Representations of affine Kac-Moody groups over local and global
  fields: a survey of some recent results},
     note={arXiv:1205.0870},
}

\bib{bfk}{article}{
      author={Braverman, A.},
      author={Finkelberg, M.}
      author={Kazhdan, D.},
       title={Affine Gindikin-Karpelevich formula via Uhlenbeck spaces},
     note={arXiv:0912.5132},
}

\bib{bgkp}{article}{
      author={Braverman, A.},
      author={Garland, H.},
      author={Kazhdan, D.},
      author={Patnaik, M.}
       title={An Affine Gindikin-Karpelevic Formula},
     note={To appear in "Perspective in Representation Theory"},
     journal={arXiv:1212.6473}
}

\bib{Bucur-Diaconu}{article}{
   author={Bucur, Alina},
   author={Diaconu, Adrian},
   title={Moments of quadratic Dirichlet $L$-functions over rational
   function fields},
   language={English, with English and Russian summaries},
   journal={Mosc. Math. J.},
   volume={10},
   date={2010},
   number={3},
   pages={485--517, 661},
   issn={1609-3321},
   review={\MR{2732571 (2011j:11092)}},
}

\bib{ga:la}{article}{
   author={Garland, Howard},
   title={The arithmetic theory of loop algebras},
   journal={J. Algebra},
   volume={53},
   date={1978},
   number={2},
   pages={480--551},
   issn={0021-8693},
   review={\MR{502647 (80a:17012)}},
   doi={10.1016/0021-8693(78)90294-6},
}

\bib{ga:ihes}{article}{
   author={Garland, Howard},
   title={The arithmetic theory of loop groups},
   journal={Inst. Hautes \'Etudes Sci. Publ. Math.},
   number={52},
   date={1980},
   pages={5--136},
   issn={0073-8301},
   review={\MR{601519 (83a:20057)}},
}

\bib{ga:lg2}{article}{
   author={Garland, Howard},
   title={The arithmetic theory of loop groups. II. The Hilbert-modular
   case},
   journal={J. Algebra},
   volume={209},
   date={1998},
   number={2},
   pages={446--532},
   issn={0021-8693},
   review={\MR{1659899 (2000h:20088)}},
   doi={10.1006/jabr.1998.7529},
}

\bib{ga:ragh}{article}{
   author={Garland, Howard},
   title={Certain Eisenstein series on loop groups: convergence and the
   constant term},
   conference={
      title={Algebraic groups and arithmetic},
   },
   book={
      publisher={Tata Inst. Fund. Res.},
      place={Mumbai},
   },
   date={2004},
   pages={275--319},
   review={\MR{2094114 (2005i:11118)}},
}
		
\bib{ga:abs}{article}{
   author={Garland, Howard},
   title={Absolute convergence of Eisenstein series on loop groups},
   journal={Duke Math. J.},
   volume={135},
   date={2006},
   number={2},
   pages={203--260},
   issn={0012-7094},
   review={\MR{2267283 (2008g:22032)}},
   doi={10.1215/S0012-7094-06-13521-4},
}

\bib{ga:ms1}{article}{
   author={Garland, Howard},
   title={Eisenstein series on loop groups: Maass-Selberg relations. I},
   conference={
      title={Algebraic groups and homogeneous spaces},
   },
   book={
      series={Tata Inst. Fund. Res. Stud. Math.},
      publisher={Tata Inst. Fund. Res.},
      place={Mumbai},
   },
   date={2007},
   pages={275--300},
   review={\MR{2348907 (2008j:22029)}},
}

\bib{ga:ms2}{article}{
   author={Garland, Howard},
   title={Eisenstein series on loop groups: Maass-Selberg relations. II},
   journal={Amer. J. Math.},
   volume={129},
   date={2007},
   number={3},
   pages={723--784},
   issn={0002-9327},
   review={\MR{2325102 (2008j:22030)}},
   doi={10.1353/ajm.2007.0017},
}

\bib{ga:ms3}{article}{
   author={Garland, Howard},
   title={Eisenstein series on loop groups: Maass-Selberg relations. III},
   journal={Amer. J. Math.},
   volume={129},
   date={2007},
   number={5},
   pages={1277--1353},
   issn={0002-9327},
   review={\MR{2354321 (2008j:22031)}},
   doi={10.1353/ajm.2007.0032},
}

\bib{ga:ms4}{article}{
   author={Garland, Howard},
   title={Eisenstein series on loop groups: Maass-Selberg relations. IV},
   conference={
      title={Lie algebras, vertex operator algebras and their applications},
   },
   book={
      series={Contemp. Math.},
      volume={442},
      publisher={Amer. Math. Soc.},
      place={Providence, RI},
   },
   date={2007},
   pages={115--158},
   review={\MR{2372559 (2008j:22032)}},
   doi={10.1090/conm/442/08522},
}

\bib{ga:zuck}{article}{
   author={Garland, Howard},
   title={On extending the Langlands-Shahidi method to arithmetic quotients
   of loop groups},
   conference={
      title={Representation theory and mathematical physics},
   },
   book={
      series={Contemp. Math.},
      volume={557},
      publisher={Amer. Math. Soc.},
      place={Providence, RI},
   },
   date={2011},
   pages={151--167},
   review={\MR{2848924}},
   doi={10.1090/conm/557/11030},
}

\bib{gmp}{article}{
      author={Garland, H.},
      author={Miller, S.D.},
      author={Patnaik, M.},
       title={The ``Negative" Loop Eisenstein Series},
     note={in preparation},
}

\bib{go:lang}{article}{
   author={Godement, Roger},
   title={Introduction \`a la th\'eorie de Langlands},
   language={French},
   conference={
      title={S\'eminaire Bourbaki, Vol.\ 10},
   },
   book={
      publisher={Soc. Math. France},
      place={Paris},
   },
   date={1995},
   pages={Exp.\ No.\ 321, 115--144},
   review={\MR{1610464}},
}

\bib{GMV}{article}{
   author={Green, Michael B.},
   author={Miller, Stephen D.},
   author={Vanhove, Pierre},
   title={Small representations, string instantons,
and Fourier modes of Eisenstein series},note={arxiv:1111.2983},
}

\bib{gmrv}{article}{
   author={Green, Michael B.},
   author={Miller, Stephen D.},
   author={Russo, Jorge G.},
   author={Vanhove, Pierre},
   title={Eisenstein series for higher-rank groups and string theory
   amplitudes},
   journal={Commun. Number Theory Phys.},
   volume={4},
   date={2010},
   number={3},
   pages={551--596},
   issn={1931-4523},
   review={\MR{2771579}},
}

\bib{hc}{article}{
   author={Harish-Chandra},
   title={Automorphic forms on a semisimple Lie group},
   journal={Proc. Nat. Acad. Sci. U.S.A.},
   volume={45},
   date={1959},
   pages={570--573},
   issn={0027-8424},
   review={\MR{0105461 (21 \#4202)}},
}

\bib{hc:af}{book}{
   author={Harish-Chandra},
   title={Automorphic forms on semisimple Lie groups},
   series={Notes by J. G. M. Mars. Lecture Notes in Mathematics, No. 62},
   publisher={Springer-Verlag},
   place={Berlin},
   date={1968},
   pages={x+138},
   review={\MR{0232893 (38 \#1216)}},
}

\bib{Hull:1994ys}{article}{
author={Hull, C.~M.},author={Townsend, P.~K.},title={
Unity of Superstring Dualities},journal={
Nucl.\ Phys.\ B},volume={ 438},year={1995}, pages={109--137},note={arXiv:hep-th/9410167},}

\bib{iwamat}{article}{
   author={Iwahori, N.},
   author={Matsumoto, H.},
   title={On some Bruhat decomposition and the structure of the Hecke rings
   of ${\germ p}$-adic Chevalley groups},
   journal={Inst. Hautes \'Etudes Sci. Publ. Math.},
   number={25},
   date={1965},
   pages={5--48},
   issn={0073-8301},
   review={\MR{0185016 (32 \#2486)}},
}
	
%
%
%

\bib{kac}{book}{
   author={Kac, Victor G.},
   title={Infinite-dimensional Lie algebras},
   edition={3},
   publisher={Cambridge University Press},
   place={Cambridge},
   date={1990},
   pages={xxii+400},
   isbn={0-521-37215-1},
   isbn={0-521-46693-8},
   review={\MR{1104219 (92k:17038)}},
   doi={10.1017/CBO9780511626234},
}

\bib{Kr:opdam}{article}{
   author={Kr{\"o}tz, Bernhard},
   author={Opdam, Eric},
   title={Analysis on the crown domain},
   journal={Geom. Funct. Anal.},
   volume={18},
   date={2008},
   number={4},
   pages={1326--1421},
   issn={1016-443X},
   review={\MR{2465692 (2010a:22011)}},
   doi={10.1007/s00039-008-0684-5},
}

\bib{eulerproducts}{book}{
   author={Langlands, Robert P.},
   title={Euler products},
   note={A James K. Whittemore Lecture in Mathematics given at Yale
   University, 1967;
   Yale Mathematical Monographs, 1},
   publisher={Yale University Press},
   place={New Haven, Conn.},
   date={1971},
   pages={v+53},
   review={\MR{0419366 (54 \#7387)}},
}

\bib{microsoft}{article}{
   author={Miller, Stephen D.},
   author={Schmid, Wilfried},
   title={Summation formulas, from Poisson and Voronoi to the present},
   conference={
      title={Noncommutative harmonic analysis},
   },
   book={
      series={Progr. Math.},
      volume={220},
      publisher={Birkh\"auser Boston},
      place={Boston, MA},
   },
   date={2004},
   pages={419--440},
   review={\MR{2036579 (2004m:11075)}},
}

\bib{rapiddecay}{article}{
   author={Miller, Stephen D.},
   author={Schmid, Wilfried},
   title={On the rapid decay of cuspidal automorphic forms},
   journal={Adv. Math.},
   volume={231},
   date={2012},
   number={2},
   pages={940--964},
   issn={0001-8708},
   review={\MR{2955198}},
   doi={10.1016/j.aim.2012.06.001},
}

\bib{mw}{book}{
   author={M{\oe}glin, C.},
   author={Waldspurger, J.-L.},
   title={Spectral decomposition and Eisenstein series},
   series={Cambridge Tracts in Mathematics},
   volume={113},
   note={Une paraphrase de l'\'Ecriture [A paraphrase of Scripture]},
   publisher={Cambridge University Press},
   place={Cambridge},
   date={1995},
   pages={xxviii+338},
   isbn={0-521-41893-3},
   review={\MR{1361168 (97d:11083)}},
   doi={10.1017/CBO9780511470905},
}

\bib{Persson:2011xi}{article}{author={Persson, D.},title={Automorphic Instanton Partition
Functions on Calabi-Yau Threefolds},note={arXiv:1103.1014 [hep-th]},}

\bib{Pioline:2009qt}{article}{author={Pioline, P.},author={Persson, D.},title={The Automorphic NS5-brane},
journal={
  Commun.\ Num.\ Theor.\ Phys.},volume={3},year={2009},pages={ 697-754},note={arXiv:0902.3274 [hep-th]},}

\bib{ragh}{article}{
   author={Raghunathan, M. S.},
   title={On the first cohomology of discrete subgroups of semisimple Lie
   groups},
   journal={Amer. J. Math.},
   volume={87},
   date={1965},
   pages={103--139},
   issn={0002-9327},
   review={\MR{0173730 (30 \#3940)}},
}

\bib{shahidi-infinite}{article}{
author={Shahidi, Freydoon},
   title={Infinite Dimensional Groups and Automorphic
$L$–Functions}, journal={Pure and Applied Mathematics Quarterly},volume={1},number={3},note={
(Special Issue: In Memory of
Armand Borel, Part 2 of 3)},pages={
683-—699}, year={2005},
}

\bib{shahidi}{book}{
   author={Shahidi, Freydoon},
   title={Eisenstein series and automorphic $L$-functions},
   series={American Mathematical Society Colloquium Publications},
   volume={58},
   publisher={American Mathematical Society},
   place={Providence, RI},
   date={2010},
   pages={vi+210},
   isbn={978-0-8218-4989-7},
   review={\MR{2683009 (2012d:11119)}},
}

\bib{stein:yale}{book}{
   author={Steinberg, Robert},
   title={Lectures on Chevalley groups},
   note={Notes prepared by John Faulkner and Robert Wilson},
   publisher={Yale University, New Haven, Conn.},
   date={1968},
   pages={iii+277},
   review={\MR{0466335 (57 \#6215)}},
}

\bib{sunyer}{article}{
author={Sunyer Balaguer, F.}, title={Sobre los momentos de las funciones holomorfas y acotadas
en un angulo},journal={ Revista Matematica Hispano-Americana}, number={4},volume={13}, year={1953}, pages={241--
246}}

\end{biblist}
\end{bibsection}

\end{document}